\DeclareMathOperator*{\esssup}{ess\,sup}
\newcommand{\norm}[2]{\left\lVert #1\right\rVert_{#2}}
\newcommand{\md}{\partial^\bullet}
\newtheorem{theorem}{Theorem}[section]
\newtheorem{defn}[theorem]{Definition}
\newtheorem{lem}[theorem]{Lemma}
\newtheorem{cor}[theorem]{Corollary}
\newtheorem{remark}[theorem]{Remark}
\newcommand{\R}{\mathbb{R}}
\renewcommand{\bar}{\overline}
\newcommand{\labelForVar}{a}
\newcommand{\grad}{\nabla}
\newcommand{\sgrad}{\nabla_\Gamma}
\newcommand{\lap}{\Delta}
\newcommand{\slap}{\Delta_\Gamma}
\newcommand{\weaklyto}{\rightharpoonup}
\newcommand{\cts}{\hookrightarrow}
\begin{document}
\hypersetup{
  urlcolor     = blue, 
  linkcolor    = Bittersweet, 
  citecolor   = Cerulean
}

\title[A coupled ligand-receptor bulk-surface system on a moving domain]{A coupled ligand-receptor bulk-surface system on a moving domain: well posedness, regularity and convergence to equilibrium}
\author{Amal Alphonse}
\address[Amal Alphonse]{Weierstrass Institute, Mohrenstrasse 39, 10117 Berlin, Germany} 
\email{amal.alphonse@wias-berlin.de}
\author{Charles M. Elliott}
\address[Charles M. Elliott]{Mathematics Institute, University of Warwick, Coventry CV4 7AL, United Kingdom} 
\email{c.m.elliott@warwick.ac.uk}
\author{Joana Terra}
\address[Joana Terra]{FaMAF--CIEM,  Universidad Nacional de C\'ordoba, CP. 5000 C\'ordoba, Argentina}
\email{joanamterra@gmail.com}

\begin{abstract}
We prove existence, uniqueness, and regularity for a reaction-diffusion system of coupled bulk-surface equations on a moving domain modelling receptor-ligand dynamics in cells. The nonlinear coupling between the three unknowns is through the Robin boundary condition for the bulk quantity and the right hand sides of the two surface equations. Our results are new even in the non-moving setting, and in this case we also show exponential convergence to a steady state. The primary complications in the analysis are indeed the nonlinear coupling and the Robin boundary condition. For the well posedness and essential boundedness of solutions we use several De Giorgi-type arguments, and we also develop some useful estimates to allow us to apply a Steklov averaging technique for time-dependent operators to prove that solutions are strong. Some of these auxiliary results presented in this paper are of independent interest by themselves.
\medskip

{\bf Keywords:} parabolic equations, advection-diffusion, moving domain, bulk-surface coupling, ligand-receptor

{\bf MSC:} 35K57, 
35K5, 
35Q92,
35R01,
35R37, 
92C37 
\end{abstract}
\maketitle

\section{Introduction}\label{sec:intro}
In this paper, we are interested in a reaction-diffusion system (motivated by biology) involving an equation on a moving bulk domain which has a nonlinear coupling to two surface equations on the boundary of the moving domain. We address the issues of well posedness and regularity as well as convergence of the solution to a steady state. The precise geometric setting is as follows. For each $t \in [0,T]$, let $D(t) \subset \mathbb{R}^{d+1}$ be a Lipschitz domain containing a $C^2$-hypersurface $\Gamma(t)$ which separates $D(t) = I(t) \cup \Omega(t)$ into an interior region $I(t)$ and an exterior (Lipschitz) domain $\Omega(t)$ (see Figure \ref{fig:1}). 
\begin{figure}[h]\label{fig:1}
\centering
\includegraphics[scale=0.7]{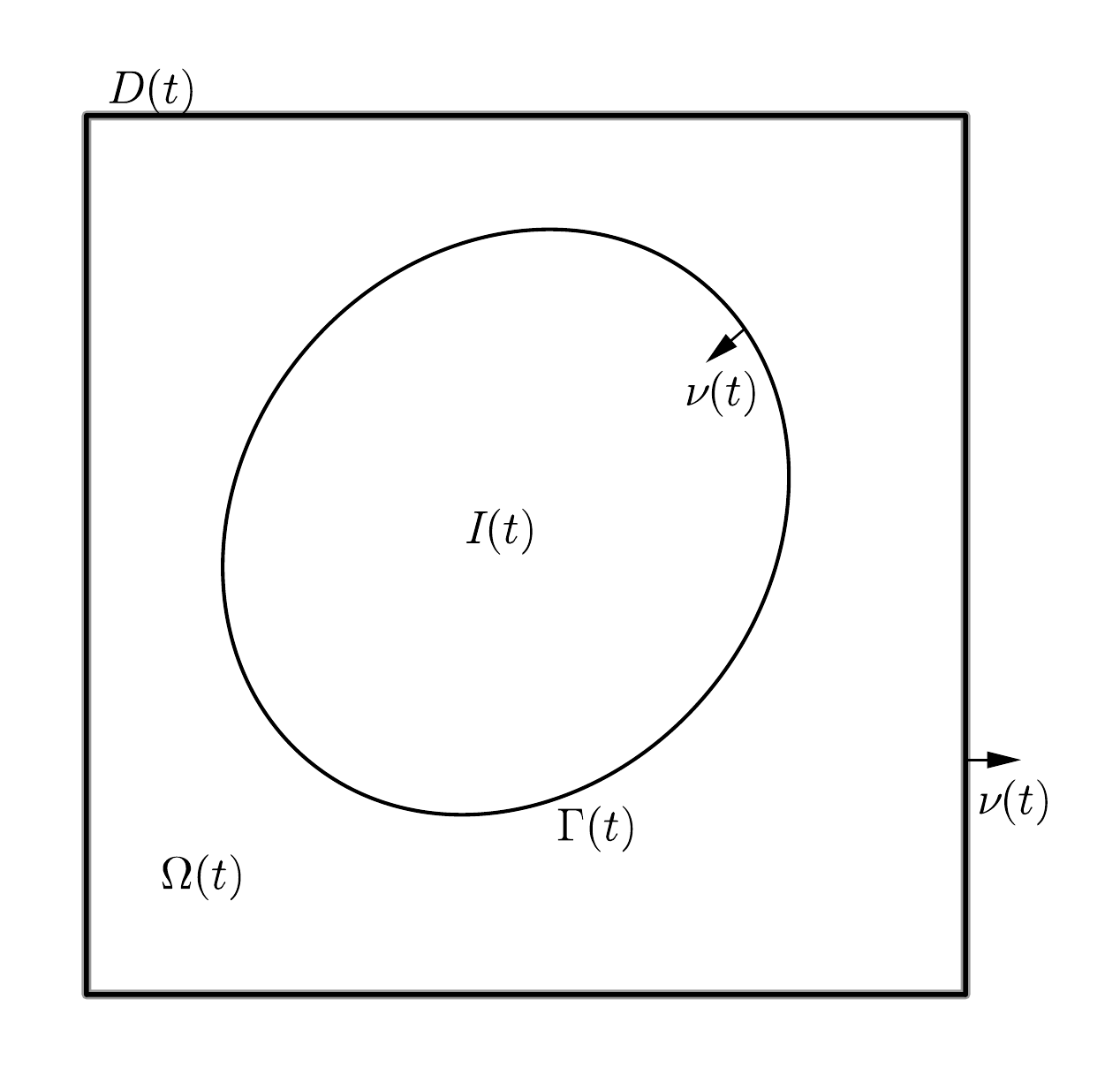}
\caption{A sketch of the geometric set-up.}
\end{figure}
We suppose that the surface $\Gamma(t)$ and the outer boundary $\partial D(t)$ both evolve in time with normal kinematic velocity fields $\mathbf V$ and $\mathbf V_o$ respectively. In addition, the points in $\Omega(t)$ and $\Gamma(t)$ are subject to material velocity fields $\mathbf V_\Omega$ and $\mathbf V_\Gamma$ respectively. These material velocity fields may arise from physical processes in the regions such as fluid flow. In order for the material velocity $\mathbf V_\Gamma$ to be compatible with the movement of the surface, its normal component must agree with the evolution: $(\mathbf V_\Gamma \cdot \nu)\nu = \mathbf V.$ We propose to study the following reaction-diffusion system of equations
\begin{equation}\label{eq:model}
\begin{aligned}
 u_t +\nabla\cdot\left(u\mathbf V_\Omega\right) - \delta_\Omega\Delta u &=0 &&\text{in }\Omega(t)\\
\delta_\Omega\nabla u \cdot \nu + u\left(\mathbf V_{\Gamma}-\mathbf V_\Omega\right) \cdot \nu &=  r(u,w,z) &&\text{on }\Gamma(t)\\
\grad u \cdot \nu &=0 &&\text{on $\partial D(t)$}\\
\partial^\circ w + w\sgrad \cdot \mathbf V  + \nabla_\Gamma\cdot\left(w\mathbf V_\Gamma^\tau\right)-\delta_\Gamma \Delta_\Gamma w&=r(u,w,z)&&\text{on }\Gamma(t)\\
\partial^\circ z + z\sgrad \cdot \mathbf V  + \nabla_\Gamma\cdot\left(z\mathbf V_\Gamma^\tau\right)-\delta_{\Gamma'}\Delta_\Gamma z&=-r(u,w,z)&&\text{on }\Gamma(t)
\end{aligned}
\end{equation}
where $\partial^\circ w=w_t+\nabla w \cdot \mathbf{V}$ is the normal time derivative (see \cite{CFG, Dziuk2013} and also Appendix \ref{sec:derivation}), $\sgrad$ stands for the tangential gradient on $\Gamma(t)$ and $\Delta_\Gamma$ denotes the Laplace--Beltrami operator on $\Gamma(t)$, understood as the tangential divergence of the tangential gradient. In \eqref{eq:model}, $\nu$ means the unit normal on $\Gamma(t)$ pointing out of $\Omega(t)$ and $\mathbf{V}_\Gamma^\tau$ means the tangential component of $\mathbf{V}_\Gamma$. The particular reaction term $r$ we take to be
\begin{equation}\label{reaction}
r(u,w,z)=\frac{1}{\delta_{k'}}z-\frac{1}{\delta_k}uw.
\end{equation}
The constants $\delta_\Omega, \delta_\Gamma, \delta_{\Gamma'},\delta_k$ and $\delta_{k'}$ are positive (physically based) parameters 
and we supplement the system above with non-negative and bounded initial data:
\begin{equation*}\label{IC}
\begin{aligned}
(u(0), w(0), z(0)) &=(u_0, w_0, z_0) \in L^{\infty}(\Omega_0)\times L^\infty(\Gamma_0)^2  &&\text{and }\;\; u_0, w_0, z_0\geq0,
\end{aligned}
\end{equation*}
where $\Omega_0:=\Omega(0)$ and $\Gamma_0:=\Gamma(t)$. The system \eqref{eq:model} is a reaction-diffusion system on an evolving space and it can be derived using conservation and mass balance laws involving fluxes that reflect the presence of the different velocity fields in the model we have in mind; details of this derivation can be found in Appendix \ref{sec:derivation}. Although this paper is mainly focused on the mathematical analysis of \eqref{eq:model}, our motivation for studying the model (with the particular reaction term, the geometry, range of parameters and initial data) stems from a biological application to receptor-ligand dynamics that we shall describe in \S \ref{sec:bio}.

As we already wrote, we are interested in questions of existence and uniqueness of weak solutions and their regularity. We will prove that solutions are in fact strong solutions, meaning that the equations hold pointwise almost everywhere. To achieve this for $u$  (which has a challenging Robin boundary condition) we apply a Steklov averaging technique for which we develop some tools since the elliptic operators are time-dependent due to the domain movement. These results have wider applicability: they can be used for showing regularity to other parabolic equations on moving domains or with time-dependent coefficients. It is also worth highlighting that our results, which we shall present in \S \ref{sec:mainResults}, are new even in the non-moving case when there is no domain evolution or material flow. In this case, the system \eqref{eq:model} simplifies to
\begin{equation}\label{eq:modelStationary}
\begin{aligned}
 u_t - \delta_\Omega\Delta u &=0 &&\text{in }\Omega\\
\delta_\Omega \nabla u \cdot \nu &=  r(u,w,z) &&\text{on }\Gamma\\
\grad u \cdot \nu &=0 &&\text{on $\partial D$}\\
w_t -\delta_\Gamma \Delta_\Gamma w&=r(u,w,z)&&\text{on }\Gamma\\
z_t-\delta_{\Gamma'}\Delta_\Gamma z&=-r(u,w,z)&&\text{on }\Gamma.
\end{aligned}
\end{equation}
Furthermore, we will specialise to this special case later on when we show exponential convergence to an equilibrium as $t \to \infty$.

Apart from the domain evolution and the various velocity fields in the model, another interesting feature of \eqref{eq:model} is that it is a system with \emph{cross-diffusion} \cite{B813825G, Ni, SHIGESADA197983}. Indeed, setting $v := w + z$ and eliminating for example $z$, we see that $v$ solves
\begin{equation}\label{eq:crossDiff}
\partial^\circ v + v\sgrad \cdot \mathbf V   + \sgrad \cdot (v\mathbf V_\Gamma^\tau ) -\delta_{\Gamma'}\slap v  = (\delta_\Gamma - \delta_{\Gamma'})\slap w
\end{equation}
and the presence of the extraneous Laplacian of $w$ justifies this classification. This cross term causes considerable problems in the existence proof as we shall we see later. Some papers emphasising the nature, difficulties and peculiarities of systems with cross-diffusion that are somewhat related to ours include \cite{Bothe2012, Madzvamuse2014, Madzvamuse2016}.

This paper extends the work by Elliott, Ranner and Venkataraman in \cite{ERV} where the authors considered the system \eqref{eq:model} on a fixed domain with the $z$ variable absent, i.e., they studied the $2\times 2$ system involving only $u$ and $w$. As we will see later, the inclusion of the $z$ species considerably complicates the analysis. In \cite{MacKenzie}, the authors consider a two-component coupled bulk-surface system on a moving domain in a similar type of domain as ours. Some other papers featuring bulk-surface interaction on moving domains are \cite{Marth2014, MR3423226, AESApplications, Madzvamuse2016}. The nearest models to the $3\times 3$ system \eqref{eq:model} with the nonlinear coupling \eqref{reaction} studied in the literature are all posed on the same domain with nonlinearities present as right hand side source terms (as opposed to a combination of source terms and the Robin boundary condition as we have), i.e., systems of the form
\begin{equation}\label{eq:otherModel}
\begin{aligned}
 u_t - \delta_1\Delta u &= r(u,w,z) &\text{ in }\Omega\\
 w_t - \delta_2 \Delta w &=  r(u,w,z)&\text{ in }\Omega\\
z_t - \delta_3\Delta z &= -r(u,w,z)&\text{ in }\Omega
\end{aligned}
\end{equation}
with the reaction term \eqref{reaction} and appropriate boundary conditions and initial data on a stationary bounded domain $\Omega$. These are used to model chemical reactions \cite{Rothe}. The properties of such systems are well understood and have a wide literature, eg. see \cite{Rothe, Morgan, Feng} for existence results and \cite{DesFellner} for asymptotic behaviour. The main difficulty of the problem in consideration in the current paper is indeed the presence of the nonlinear coupling through the boundary condition, which in some aspects requires a rather more delicate analysis. 

\subsection{Weak formulation of the problem}\label{sec:weakFormulation}
The system of interest is set in a domain which evolves with time and scalar fields are subject to a material velocity field. Therefore, we have to make clear the precise functional setting we will work on and what we mean by a solution to \eqref{eq:model}. One way to consider this problem is to define the evolution as a family of diffeomorphisms that, for each time, pull back the domain to a fixed reference domain. In other words, the evolution of the domain $\Omega(t)$ with boundary $\Gamma(t)$ can be thought as the transport of a fixed reference domain $\Omega_0$ with boundary $\Gamma_0$. More precisely, we assume that there exists a flow 
$\Phi\colon [0,T]\times\R^{d+1}\rightarrow\R^{d+1}$
such that 
\begin{enumerate}
\item $\Phi_t:=\Phi(t,\cdot)\colon \overline{\Omega_0}\to \overline{\Omega(t)}$ is a $C^2$-diffeomorphism with $\Phi_t(\Gamma_0) = \Gamma(t)$
\item $\Phi_t$ solves the ODE 
\begin{align*}
\frac{d}{dt}\Phi_t(\cdot)&=\mathbf V_p(t,\Phi_t(\cdot))\\
\Phi_0&=\rm{Id}.
\end{align*}
\end{enumerate}
Here, the map $\mathbf V_p\colon [0,T] \times \mathbb{R}^{d+1} \to \mathbb{R}^{d+1}$ is a continuously differentiable velocity field representing a particular parametrisation of the domain, and it must satisfy
\[(\mathbf V_p|_{\Gamma}\cdot \nu)\nu = \mathbf V\quad\text{and}\quad (\mathbf V_p|_{\partial D}\cdot \nu)\nu = \mathbf V_o\]
in order to be compatible with the prescribed evolution of $\Gamma(t)$ and $\partial D(t)$ given by the velocity fields $\mathbf{V}$ and $\mathbf{V}_o$. One may always choose $\mathbf V_p$ to coincide with $\mathbf V$ and $\mathbf V_o$ but for numerical realisations it can be beneficial to choose $\mathbf V_p$ differently to avoid mesh degeneration, see \cite{Dziuk2013, Elliott2012}. 
Moreover, this vector field $\mathbf V_p$ is essential in deriving a useful notion of time derivative in a setting of moving domains. It is well known that for a sufficiently smooth quantity $u$ defined in $\Omega(t)$ its (classical or strong) material derivative is given by
\begin{equation}\label{eq:smd}
\md_\Omega u (t)=u_t(t)+\nabla u(t)\cdot\mathbf V_p(t)
\end{equation}
(see \cite{AESAbstract, AESApplications} and references therein). This is equivalent to a total derivative (taking into account that space points $x$ also depend on time and their trajectory has been parametrized with a velocity given by $\mathbf V_p$). 

Under the circumstances given above, the mapping $\phi_{-t}$ defined by $\phi_{-t}u := u \circ \Phi_t$ defines a linear homeomorphism between the spaces $L^p(\Omega(t))$, $H^1(\Omega(t))$ and the reference spaces $L^p(\Omega_0)$, $H^1(\Omega_0)$ \cite{AESApplications, AEStefan}. The same is true for the corresponding Sobolev spaces on $\Gamma$ \cite{AEStefan}. If we also assume
\begin{enumerate}
\item $\Phi_t:=\Phi(t,\cdot)\colon \overline{\Gamma_0}\to \overline{\Gamma(t)}$ is a $C^3$-diffeomorphism
\item $\Phi_{(\cdot)} \in C^3([0,T]\times \Gamma_0)$
\end{enumerate}
then the above property also holds for the fractional Sobolev space $H^{1\slash 2}(\Gamma(t))$ \cite[\S 5.4.1]{AESApplications}. 

We may then define the Banach spaces $L^p_{L^q(\Omega)}$, $L^2_{H^1(\Omega)}$, $L^p_{L^q(\Gamma)}$, $L^2_{H^1(\Gamma)}$, $L^2_{H^{1\slash 2}(\Gamma)}$, which are Hilbert spaces for $p=q=2$. Here, the notation $L^p_X$ stands for 
\begin{align*}
L^p_X &:= \left\{u:[0,T] \to \bigcup_{t \in [0,T]}\!\!\!\! X(t) \times \{t\},\quad t \mapsto (\hat u(t), t) \mid \phi_{-(\cdot)} \hat u(\cdot) \in L^p(0,T;X_0 )\right\}
\end{align*}
where for each $t \in [0,T]$, the map $\phi_{-t}\colon X(t) \to  X_0$ is a linear homeomorphism; the corresponding norm is
\[\norm{u}{L^p_{X}} := \left(\int_0^T \norm{u(t)}{X(t)}^p\right)^{\frac 1p}\]
(for $p=\infty$ we make the obvious modification). These are generalisations of Bochner spaces to handle (sufficiently regular) time-evolving Banach spaces $X \equiv \{X(t)\}_{t \in [0,T]}$. These spaces were first defined in the (Hilbertian) Sobolev space setting by Vierling in \cite{Vierling} and the theory was subsequently generalised by the first two of the present authors along with Stinner in \cite{AESAbstract} to an abstract Hilbertian setting and by the first two present authors in \cite{AEStefan} to a more general Banach space setting. In \cite{AESAbstract} it is also defined a notion of a weak time derivative (or weak material derivative), which in the context of this paper is as follows. We say that a function $u \in L^2_{H^1(\Omega)}$ has a weak material derivative $\md_\Omega u \in L^2_{H^{1}(\Omega)^*}$ if 
\begin{equation*}
\int_0^T \langle \md_\Omega u(t), \eta(t) \rangle_{H^{1}(\Omega(t))^*, H^1(\Omega(t))} = - \int_0^T \int_{\Omega(t)}u(t) \md_\Omega \eta(t) - \int_0^T \int_{\Omega(t)}u(t)\eta(t)\grad \cdot \mathbf V_p
\end{equation*}
holds for all smooth and compactly supported (in time) functions $\eta$, where $\md_\Omega \eta(t)$ is the classical material derivative given by the formula \eqref{eq:smd}. A similar formula with the correct modifications defines the weak material derivative $\md_\Gamma u \in L^2_{H^{-1}(\Gamma)}$ for a function $u \in L^2_{H^1(\Gamma)}$ --- for this, all the integrals and duality products involving $\Omega$ are replaced by $\Gamma$ and also the term $\grad \cdot \mathbf V_p$ becomes $\sgrad \cdot \mathbf V_p$. 

Instead of the cumbersome notation $\md_\Omega, \md_\Gamma$, we will just write the weak material derivative as $\dot u$; the reader should bear in mind that this is an abuse of notation since there are two different derivatives on two different domains. 

With these objects at hand, we define the evolving Sobolev--Bochner spaces
\begin{align*}
H^1_{H^{1}(\Omega)^*} &= \{ u \in L^2_{H^{_1}(\Omega)} \mid \dot u \in L^2_{H^{1}(\Omega)^*}\}\qquad\text{and}\qquad
H^1_{L^2(\Omega)} = \{ u \in L^2_{L^2(\Omega)} \mid \dot u \in L^2_{L^2(\Omega)}\}
\end{align*}
and the Sobolev--Bochner spaces on the surfaces in the obvious manner. We do not give the precise technical details and properties of these spaces here but refer to \cite{AESAbstract, AESApplications, AEStefan} for the interested reader.
\begin{remark}
Strictly speaking, it is a misnomer to call $\dot u$ as defined above the weak \emph{material} derivative since the velocity field $\mathbf V_p$ associated to it in its very definition is not (in general) the material velocity field but a parametrised velocity field. We nonetheless persist with this terminology.
\end{remark}
In the model \eqref{eq:model} a material derivative does not feature explicitly, so, in order to be able to formulate the problem in an appropriate functional analysis setting, we add and subtract the term $\nabla u\cdot\mathbf V_p$ to both sides of the first equation in \eqref{eq:model} to obtain (after some manipulation)
\begin{equation*}\label{eqbulkALE}
\dot{u}+u\nabla\cdot\mathbf V_\Omega-\delta\Delta u+\nabla u\cdot\left(\mathbf V_\Omega-\mathbf V_p\right)=0,
\end{equation*}
and a similar equation can be derived for the surface PDEs. For convenience, we will define the jumps
\[\mathbf J_\Omega := \mathbf V_\Omega - \mathbf V_p, \qquad \mathbf{J}_{\Gamma} :=\mathbf V_\Gamma - \mathbf V_p\qquad\text{and}\qquad j:= (\mathbf{V}_\Omega-\mathbf{V}_\Gamma)\cdot \nu = \mathbf J_\Omega|_\Gamma \cdot \nu\] where $j$ is the jump in the normal velocities on $\Gamma$. Observe $\mathbf{J}_{\Gamma} = \mathbf V_\Gamma^\tau - \mathbf V_p^\tau$ has no normal component. 
Finally, the problem \eqref{eq:model} we consider can be rewritten as
\begin{equation}\label{eq:modelALE}
\begin{aligned}
 \dot{u}+u\nabla\cdot\mathbf V_p-\delta_\Omega\Delta u+\nabla \cdot\left(\mathbf{J}_\Omega u\right)&=0&\text{ in }\Omega(t)\\
\delta_\Omega\nabla u \cdot \nu - uj &=  r(u,w,z)&\text{ on }\Gamma(t)\\
\dot{w} + w\nabla_\Gamma\cdot\mathbf V_p-\delta_\Gamma\Delta w+\nabla_\Gamma \cdot\left(\mathbf{J}_\Gamma w\right)&= r(u,w,z)&\text{ on }\Gamma(t)\\
\dot{z} + z\nabla_\Gamma\cdot\mathbf V_p-\delta_{\Gamma'}\Delta z+\nabla_\Gamma \cdot\left(\mathbf{J}_\Gamma z\right)&=-r(u,w,z)&\text{ on }\Gamma(t)
\end{aligned}
\end{equation}
and it is this form of the system that we work with in the rest of the paper. Now we define what we mean by a weak solution to \eqref{eq:modelALE}. 
\begin{defn}\label{defn:weakForm}
A weak solution of \eqref{eq:modelALE} is a triple $(u,w,z) \in H^1_{H^{1}(\Omega)^*}\cap L^2_{H^1(\Omega)} \times (H^1_{L^2(\Gamma)}\cap L^2_{H^1(\Gamma)})^2$ with $w \in L^\infty_{L^\infty(\Gamma)}$ satisfying 
\begin{align*}
&\langle \dot u, \eta \rangle + \int_{\Omega(t)}u\eta \grad \cdot \mathbf V_p + \delta_\Omega \int_{\Omega(t)}\nabla u \cdot\nabla \eta + \int_{\Omega(t)}\grad \cdot (\mathbf{J}_\Omega u)\eta = \int_{\Gamma(t)}r(u,w,z)\eta + \int_{\Gamma(t)}ju\eta&&\text{$\forall \eta \in L^2_{H^1(\Omega)}$}\\
&\langle \dot w, \psi \rangle + \int_{\Gamma(t)}w\psi\sgrad \cdot \mathbf V_p  + \delta_\Gamma \int_{\Gamma(t)}\sgrad w\cdot\sgrad \psi + \int_{\Gamma(t)}\sgrad \cdot (\mathbf{J}_{\Gamma} w)\psi = \int_{\Gamma(t)}r(u,w,z)\psi&&\text{$\forall \psi \in L^2_{H^1(\Gamma)}$}\\
&\langle \dot z, \xi \rangle + \int_{\Gamma(t)}z\xi\sgrad \cdot \mathbf V_p  + \delta_{\Gamma'} \int_{\Gamma(t)}\sgrad z\cdot\sgrad \xi + \int_{\Gamma(t)}\sgrad \cdot (\mathbf{J}_{\Gamma} z)\xi = -\int_{\Gamma(t)}r(u,w,z)\xi&&\text{$\forall \xi \in L^2_{H^1(\Gamma)}$}    
\end{align*}
for almost every $t \in [0,T]$.
\end{defn}
 From now on, for reasons of readability, we will omit the $\cdot$ from the dot products above 
\begin{remark}Our definition of the weak solution is written in terms of the chosen parametrisation $\mathbf V_p$ and the spaces where we look for solutions depend on $\mathbf V_p$ since they depend on $\Phi_{(\cdot)}$. So a natural and reasonable question is whether we get a different solution if we pick a different parametrisation of the velocity field and different $\Phi_{(\cdot)}$. Let us see that this is not the case, at least formally. Working in the regime of strong solutions,  given two solutions $(u_i, w_i, z_i)$ corresponding to parametrisations $\mathbf{V}_{p}^i$ solving \eqref{eq:modelALE}, we know that they also solve the original model \eqref{eq:model}, which has no dependence on $\mathbf{V}_p^i$. Taking then the difference of the equations and testing appropriately, we find then that $u_1\equiv u_2$, $w_1 \equiv w_2$ and $z_1 \equiv z_2$ by the same uniqueness argument as the one we present below in the proof of Theorem \ref{thm:uniqueness}.
\end{remark}
A consequence of one of the formulae in Appendix \ref{sec:prelim} allows us to rewrite the weak formulation for $u$ given in Definition \ref{defn:weakForm} as
\begin{equation*}\label{eq:weakFormForu}
\langle \dot u, \eta \rangle + \int_{\Omega(t)}u\eta \grad \cdot \mathbf V_p + \delta_\Omega \int_{\Omega(t)}\nabla u \nabla \eta -\int_\Omega u(\mathbf{J}_\Omega\cdot \grad \eta) = \int_{\Gamma(t)}r(u,w,z)\eta \quad\text{for all $\eta \in L^2_{H^1(\Omega)}$}.
\end{equation*}
By testing with unity the $u$ and $z$ equations and adding the resulting equalities together, and doing the same for the $w$ and $z$ equations, we find 
\begin{align*}
\frac{d}{dt}\left(\int_{\Omega(t)}u(t) + \int_{\Gamma(t)}z(t)\right) = 0\qquad \text{and} \qquad
\frac{d}{dt}\left(\int_{\Gamma(t)} w(t) + \int_{\Gamma(t)} z(t)\right) = 0
\end{align*}
so that the following quantities are conserved for all $t \in [0,T]$:
\begin{equation}\label{eq:conservationMass}
\begin{aligned}
\int_{\Omega(t)}u(t) + \int_{\Gamma(t)}z(t) &= \norm{u_0}{L^1(\Omega_0)} + \norm{z_0}{L^1(\Gamma_0)} =: M_1\\
\int_{\Gamma(t)} w(t) + \int_{\Gamma(t)}z(t) &=  \norm{w_0}{L^1(\Gamma_0)} + \norm{z_0}{L^1(\Gamma_0)} =: M_2.
\end{aligned}
\end{equation}
Note that these results hold independently of the choice of $r$.
\subsection{Main results}\label{sec:mainResults}
All of the following results are valid for dimensions in the physically relevant situations where $\Omega \subset \mathbb{R}^{d+1}$ for $d+1 \leq 3$; some results hold even for $d+1 \leq 4$. 
These constraints on the dimension are due to the various Sobolev-type functional inequalities that we shall use. We begin with the existence result, which we will prove in \S \ref{sec:existence} (the restriction on the dimension is a technical one, due only to the De Giorgi result of Lemma \ref{lem:deGiorgi} that is needed to prove the theorem). 

 Below, the velocity fields $\mathbf{V}, \mathbf{V}_\Gamma, \mathbf{V}_\Omega$ and $\mathbf{V}_0$ are continuously differentiable and satisfy the assumptions detailed in \S \ref{sec:intro} and \S \ref{sec:weakFormulation}.  

\begin{theorem}[Global existence]\label{thm:existence}
For dimensions $d \leq 3$, for every non-negative initial data $(u_0, w_0, z_0) \in L^\infty(\Omega_0)\times L^\infty(\Gamma_0)^2$, there exists a non-negative weak solution $(u,w,z) \in H^1_{H^{1}(\Omega)^*}\cap L^2_{H^1(\Omega)} \times (H^1_{L^2(\Gamma)}\cap L^2_{H^1(\Gamma)})^2$  to the system \eqref{eq:modelALE}. Furthermore, $w$ satisfies
\[\norm{w}{L^\infty_{L^\infty(\Gamma)}} 
\leq e^{(\norm{\sgrad \cdot \mathbf V_\Gamma}{\infty}+\delta_\Gamma)T}\left(\norm{w_0}{L^\infty(\Gamma_0)} + \frac{C}{\delta_{\Gamma}\delta_{k'}}e^{\left(\frac{A }{2\delta_{k'}} + \frac 12\norm{\sgrad \cdot \mathbf V_\Gamma}{\infty}\right)T}\left(\sqrt{2+A}\norm{w_0}{L^2(\Gamma_0)} + \sqrt 2 \norm{z_0}{L^2(\Gamma_0)}\right)\right)\]
where $A \geq \max\left(1, \frac{C_0}{2\delta_\Gamma} + \frac{(\delta_\Gamma - \delta_{\Gamma'})^2}{2\delta_\Gamma\delta_{\Gamma'}}\right)$ for $C_0 \geq 0$ chosen arbitrarily.
\end{theorem}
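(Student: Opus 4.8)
The plan is to obtain the solution as a limit of solutions of a regularised system and to check that the a priori bounds — above all the displayed $L^\infty$ estimate for $w$ — survive the passage to the limit. I would replace $r$ in \eqref{eq:modelALE} by $r_n(u,w,z)=\tfrac1{\delta_{k'}}z-\tfrac1{\delta_k}u\,T_n(w)$, where $T_n$ is a smooth nondecreasing truncation with $0\le T_n\le n$ and $T_n(s)=s$ on $[0,n]$; then the coupling enters only through bounded lower-order and boundary coefficients together with the (traces of) $u$ and $z$, and the regularised system is solvable either by a Banach fixed point for the triple in $L^2_{L^2}$, iterating the linear parabolic theory on evolving spaces of \cite{AESAbstract, AESApplications}, or by a Galerkin scheme on the moving domain. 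Non-negativity of the regularised triple follows by testing each equation with the negative part of its unknown: on $\{w<0\}$ one has $T_n(w)=0$, so the $w$-reaction reduces to $\tfrac1{\delta_{k'}}z$, and splitting $z=z_+-z_-$ one obtains, with the analogous computations for $u$ and $z$, a Gronwall inequality for $\int_{\Gamma(t)}(u_-^2+w_-^2+z_-^2)$ starting from zero. Weak limits preserve non-negativity.

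\textbf{The $w$-energy estimate through $v=w+z$.} The obstruction to the usual energy estimate is the cross-diffusion: testing the $w$- and $z$-equations with $w$ and $z$ produces the term $\tfrac1{\delta_k}\int_{\Gamma}uwz$, which cannot be controlled without an $L^\infty$ bound one does not yet have. Instead I would use $v:=w+z$, which by \eqref{eq:crossDiff} solves a parabolic equation with no reaction term but with the extraneous right-hand side $(\delta_\Gamma-\delta_{\Gamma'})\slap w$. Testing the $w$-equation with $w$ one uses $-\tfrac1{\delta_k}uw^2\le0$ and $\tfrac1{\delta_{k'}}\int_\Gamma zw\le\tfrac1{2\delta_{k'}}(\|w\|_{L^2(\Gamma)}^2+\|v\|_{L^2(\Gamma)}^2)$ (valid since $0\le z\le v$); testing the $v$-equation with $v$ one absorbs $(\delta_\Gamma-\delta_{\Gamma'})\int_\Gamma\slap w\,v=-(\delta_\Gamma-\delta_{\Gamma'})\int_\Gamma\sgrad w\cdot\sgrad v$ into $\delta_{\Gamma'}\|\sgrad v\|_{L^2(\Gamma)}^2$ by Young's inequality, at the price of $\tfrac{(\delta_\Gamma-\delta_{\Gamma'})^2}{2\delta_{\Gamma'}}\|\sgrad w\|_{L^2(\Gamma)}^2$. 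Forming $A\times(\text{$w$-estimate})+(\text{$v$-estimate})$ with $A$ as in the statement, this leftover gradient term is dominated by $A\delta_\Gamma\|\sgrad w\|_{L^2(\Gamma)}^2$ (the free constant $C_0$ is there to retain enough of that dissipation to absorb the transport terms $\int_\Gamma\sgrad\cdot(\mathbf{J}_\Gamma w)w$ etc.), the transport and curvature contributions assemble into the factor $\|\sgrad\cdot\mathbf V_\Gamma\|_\infty$, and since $A\ge1$ gives $\|w\|^2+\|v\|^2\le A\|w\|^2+\|v\|^2$, Gronwall's lemma yields
\[A\|w(t)\|_{L^2(\Gamma(t))}^2+\|v(t)\|_{L^2(\Gamma(t))}^2\le e^{(\frac{A}{\delta_{k'}}+\|\sgrad\cdot\mathbf V_\Gamma\|_\infty)t}\bigl((A+2)\|w_0\|_{L^2(\Gamma_0)}^2+2\|z_0\|_{L^2(\Gamma_0)}^2\bigr).\]
Hence both $w$ and $z$ (using $\|z\|\le\|v\|$) are bounded in $L^\infty_{L^2(\Gamma)}$, and after time-integrating the dissipation also in $L^2_{H^1(\Gamma)}$, by exactly the quantity $e^{(\frac{A}{2\delta_{k'}}+\frac12\|\sgrad\cdot\mathbf V_\Gamma\|_\infty)t}(\sqrt{2+A}\,\|w_0\|_{L^2(\Gamma_0)}+\sqrt2\,\|z_0\|_{L^2(\Gamma_0)})$ from the statement; crucially all of this is uniform in $n$.

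\textbf{$L^\infty$ bound via De Giorgi.} With $w,z\in L^\infty_{L^2(\Gamma)}\cap L^2_{H^1(\Gamma)}$ in hand I would apply Lemma \ref{lem:deGiorgi} to the $w$-equation. For the upper bound the good-sign term $-\tfrac1{\delta_k}u\,T_n(w)\le0$ is discarded, so $w$ is a subsolution of a linear parabolic equation on $\Gamma(t)$ with datum $w_0\in L^\infty(\Gamma_0)$ and right-hand side $\tfrac1{\delta_{k'}}z$; by the embedding $H^1(\Gamma)\hookrightarrow L^6(\Gamma)$ — available precisely because $\dim\Gamma=d\le3$, which is the only use of the dimensional restriction — this source lies in an $L^p_{L^q(\Gamma)}$ space above the Aronson--Serrin line, and Lemma \ref{lem:deGiorgi} delivers an $L^\infty_{L^\infty(\Gamma)}$ bound of the shape $e^{(\|\sgrad\cdot\mathbf V_\Gamma\|_\infty+\delta_\Gamma)T}(\|w_0\|_{L^\infty(\Gamma_0)}+\tfrac{C}{\delta_\Gamma\delta_{k'}}N(z))$, where $N(z)$ is an $L^2$-type norm controlled by the previous step. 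Inserting that bound yields the displayed estimate, again uniformly in $n$; taking $n$ above this bound removes the truncation on $w$, so the regularised solution solves the true system.

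\textbf{Bulk estimate, compactness and conclusion.} Testing the $u$-equation with $u$, the Robin contribution $\tfrac1{\delta_{k'}}\int_\Gamma zu-\tfrac1{\delta_k}\int_\Gamma u^2w+\int_\Gamma ju^2$ is handled by discarding $-\tfrac1{\delta_k}\int_\Gamma u^2w\le0$, using $z\in L^2_{L^2(\Gamma)}$, boundedness of $j$, and the interpolated trace inequality $\|u\|_{L^2(\Gamma)}^2\le\epsilon\|\nabla u\|_{L^2(\Omega)}^2+C_\epsilon\|u\|_{L^2(\Omega)}^2$; this places $u\in L^\infty_{L^2(\Omega)}\cap L^2_{H^1(\Omega)}$ and then $\dot u\in L^2_{H^1(\Omega)^*}$ from the equation. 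The $L^\infty$ bound on $w$ now makes $r\in L^2_{L^2(\Gamma)}$ (the trace of $uw$ lies in $L^2_{L^2(\Gamma)}$ since $w\in L^\infty$ and $u|_\Gamma\in L^2_{L^2(\Gamma)}$), so testing the surface equations additionally with $\dot w$ and $\dot z$ (legitimate at the Galerkin level) bounds these in $L^2_{L^2(\Gamma)}$, which is the regularity required by Definition \ref{defn:weakForm}. An Aubin--Lions-type compactness result in the evolving-space framework of \cite{AESAbstract} then yields strong $L^2_{L^2}$ convergence of the regularised triple, enough to pass to the limit in every (bi)linear term, boundary terms included, and lower semicontinuity of norms transfers the displayed $w$-estimate to the limit. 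I expect the genuinely hard part to be the cross-diffusion: the passage to $v=w+z$ is essential rather than convenient, because a direct energy estimate on $(w,z)$ is circular, and the De Giorgi $L^\infty$ bound on $w$ is the linchpin — it is simultaneously the assertion of the theorem and the device that makes $\tfrac1{\delta_k}uw$ effectively linear in $u$, which is what closes the whole argument.
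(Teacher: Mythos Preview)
Your approach is essentially that of the paper: truncate, establish non-negativity, derive the $v=w+z$ energy estimate (this is Lemma~\ref{lem:energyEstimates}), feed it into the De~Giorgi bound of Lemma~\ref{lem:deGiorgi} to obtain the $L^\infty$ estimate on $w$, and remove the truncation. Two small points of divergence are worth flagging. First, the paper truncates \emph{both} $z$ and $w^+$ (system~\eqref{eq:fullTruncatedProblem}) and removes the $z$-truncation first via $m$-dependent bounds before turning to $w$; your single $w$-truncation is a legitimate simplification, but a Banach contraction in $L^2_{L^2}$ is not immediate --- the bilinear term $uT_n(w)$ has no Lipschitz estimate there without $L^\infty$ control on one factor --- so you should rely on your Galerkin alternative (the paper uses a Schauder fixed point). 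Second, your attribution of the restriction $d\le3$ to the embedding $H^1(\Gamma)\hookrightarrow L^6(\Gamma)$ is a slight misdiagnosis: the input Lemma~\ref{lem:deGiorgi} actually requires is $g^+=\tfrac{1}{\delta_{k'}}z\in L^\infty_{L^2(\Gamma)}$, which your energy step delivers directly without any Sobolev embedding; the dimensional constraint arises \emph{inside} the De~Giorgi iteration (the admissible choice of $p$ and $r$ in its proof), equivalently from the Aronson--Serrin condition $\tfrac{d}{4}<1$ for $L^\infty_{L^2}$ data --- cf.\ Remark~\ref{rem:gain}.
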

One expects not only $w$ to be a bounded function but also $u$ and $z$, and indeed we show that this is the case for dimensions $d \leq 2$. This restriction on $d$ is again technical in nature due to another De Giorgi method (Lemma \ref{lem:deGiorgiSecond}) we employ. Further details of this, and the proof of the following theorem will be given in \S \ref{sec:boundednessOfUandZ}.
\begin{theorem}[Boundedness for $z$ and $u$]\label{thm:LinftyBounds}
For dimensions $d \leq 2$, the solutions $u$ and $z$ of the system \eqref{eq:modelALE} are bounded in space and time. In particular,
\begin{align*}
\norm{z}{L^\infty_{L^\infty(\Gamma)}}&\leq e^{(\norm{\sgrad \cdot \mathbf V_\Gamma}{\infty}+\delta_{\Gamma'})T}\left(\norm{z_0}{L^\infty(\Gamma_0)}+ C\min(1\slash 2, \delta_{\Gamma'})^{-1}\norm{w}{L^\infty_{L^\infty(\Gamma)}}\norm{u}{L^2_{H^{1}(\Omega)}}\right)\\
\norm{u}{L^\infty_{L^\infty(\Omega)}} &\leq e^{(\norm{\nabla \cdot \mathbf V_\Omega}{\infty} + C_1\delta_\Omega^{-1} (\alpha + \frac 52\norm{j}{\infty})^2)T}\left(\norm{u_0}{L^\infty(\Omega_0)} + \max\left(1, \alpha C_1\right)(C_5\min(1, \delta_\Omega)^{-(\frac{1}{2\kappa} + \frac 12)}+ C_4)\right)
\end{align*}
where 
$\alpha = \delta_{k'}^{-1}\norm{z}{L^\infty_{L^\infty(\Gamma)}} + \norm{j}{\infty}\norm{u_0}{L^\infty(\Omega_0)}$ and $\kappa$ is as in \S \ref{sec:uBound}.
\end{theorem}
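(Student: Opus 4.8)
The plan is to prove the two bounds in sequence, since the estimate for $z$ depends on $\norm{w}{L^\infty_{L^\infty(\Gamma)}}$ (already controlled by Theorem~\ref{thm:existence}) and on $\norm{u}{L^2_{H^1(\Omega)}}$, while the estimate for $u$ in turn uses the just-obtained bound for $z$ through the reaction term on $\Gamma$. Throughout I would work with the transformed (pulled-back) formulation so that the evolving-space machinery of \cite{AESAbstract, AEStefan} and the transport formula are available, and I would freely use the weak formulations from Definition~\ref{defn:weakForm} together with the $L^2_{H^1}$ a priori bounds on $u,w,z$ that come out of the existence proof in \S\ref{sec:existence}.

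\textbf{Step 1: boundedness of $z$.} The $z$-equation is a linear surface advection–diffusion equation with right-hand side $-r(u,w,z) = -\delta_{k'}^{-1}z + \delta_k^{-1}uw$. I would test with $(z-k)_+$ for $k \geq \norm{z_0}{L^\infty(\Gamma_0)}$ (more precisely with the time-dependent truncation $(z - k e^{\lambda t})_+$ to absorb the zeroth-order transport term $z\sgrad\cdot\mathbf V_p$ and the $-\delta_{k'}^{-1}z$ term into a Gronwall factor, which is where the prefactor $e^{(\norm{\sgrad\cdot\mathbf V_\Gamma}{\infty}+\delta_{\Gamma'})T}$ originates). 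The dangerous term is $\int_{\Gamma(t)} \delta_k^{-1} u w (z-k)_+$; here I would pull out $\norm{w}{L^\infty_{L^\infty(\Gamma)}}$, use the trace inequality $\norm{u}{L^2(\Gamma(t))} \lesssim \norm{u}{H^1(\Omega(t))}$, and bound $\norm{(z-k)_+}{L^2(\Gamma(t))}$ by the energy, closing via Gronwall and a Stampacchia/absorption argument; the constant $\min(1/2,\delta_{\Gamma'})^{-1}$ reflects coercivity of the Laplace–Beltrami form against the $L^2$-time-derivative norm. Since $z$ enters its own reaction sublinearly with the good sign, no De~Giorgi iteration is needed here — the $L^\infty$ bound is essentially an $L^\infty$-in-time, $L^2$-in-space energy estimate plus the trace of $u$.

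\textbf{Step 2: boundedness of $u$.} Now the $u$-equation carries the nonlinearity in the Robin boundary condition: testing the first equation in \eqref{eq:modelALE} with a truncation $(u - \kappa)_+$ (again shifted by an exponential to handle $u\grad\cdot\mathbf V_p$ and the convective jump term $\grad\cdot(\mathbf J_\Omega u)$, producing the factor $e^{(\norm{\grad\cdot\mathbf V_\Omega}{\infty} + \dots)T}$) yields a boundary integral $\int_{\Gamma(t)} r(u,w,z)(u-\kappa)_+ + \int_{\Gamma(t)} ju(u-\kappa)_+$. Using $r \leq \delta_{k'}^{-1}z$ (dropping the good $-\delta_k^{-1}uw$ term), the bounds on $z$ and $j$ give a boundary source controlled by $\alpha = \delta_{k'}^{-1}\norm{z}{L^\infty_{L^\infty(\Gamma)}} + \norm{j}{\infty}\norm{u_0}{L^\infty(\Omega_0)}$ times $\norm{(u-\kappa)_+}{L^1(\Gamma(t))}$, plus a term quadratic in the traces that contributes the $(\alpha + \tfrac52\norm{j}{\infty})^2$ Young-inequality constant. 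The key point is that this is a genuine De~Giorgi setup on the level set $A_\kappa(t) = \{u(t) > \kappa e^{\lambda t}\}$: one derives a recursive inequality for $\norm{(u-\kappa_n)_+}{L^2_{H^1(\Omega)}}$ across an increasing sequence of levels $\kappa_n$, where the boundary term is handled by the trace embedding $H^1(\Omega) \hookrightarrow L^{q}(\Gamma)$ and the interpolation/Sobolev inequality $L^2_{H^1} \cap L^\infty_{L^2} \hookrightarrow L^{2+\text{something}}$ — this is exactly why $d \leq 2$ is required, so that the relevant Sobolev exponents give a superlinear power of the level-set measure. Feeding this into the fast-geometric-convergence lemma (Lemma~\ref{lem:deGiorgiSecond}) yields $(u-\kappa_\infty)_+ = 0$ for a finite $\kappa_\infty$ of the stated form, with the $\min(1,\delta_\Omega)^{-(1/2\kappa + 1/2)}$ factor tracking how the diffusion constant enters the Sobolev/coercivity constants raised to the iteration-dependent power.

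\textbf{Main obstacle.} The delicate part is the $u$-estimate: one must set up the De~Giorgi iteration so that the nonlinear Robin boundary flux is absorbed correctly. Unlike an interior reaction term, the boundary source lives on $\Gamma(t)$ and is only controlled after a trace inequality, so the iteration inequality mixes a bulk norm of $(u-\kappa_n)_+$ with a boundary norm of it; balancing the powers of $|A_{\kappa_n}|$ (with its bulk part and its $\Gamma$-part) to beat the geometric blow-up of the level increments is what forces $d \leq 2$ and produces the precise exponents in the statement. A secondary technical nuisance is the bookkeeping of the time-dependent domain: all the truncation testing must be justified in the evolving-space framework (e.g. that $(u - \kappa e^{\lambda t})_+$ is an admissible test function with a weak material derivative), and the transport identity must be applied carefully to convert $\langle \dot u, (u-\kappa e^{\lambda t})_+\rangle$ into $\tfrac{d}{dt}$ of an energy plus lower-order terms involving $\grad\cdot\mathbf V_p$.
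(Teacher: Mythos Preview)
Your outline for $u$ is broadly aligned with the paper's argument (a De Giorgi iteration handling the Robin boundary source via the $L^\infty$ bound on $z$, the interpolated trace inequality, and the bulk--boundary interpolation of Lemma~\ref{lem:interpolatedSobolev}). The paper organises it slightly differently: after the exponential transformation it splits $\tilde u = a+b$, where $b$ carries the initial data and satisfies a problem with sign-favourable boundary condition, so that $b \leq \norm{u_0}{L^\infty(\Omega_0)}$ by a direct maximum-principle test, and the De Giorgi argument is run only on $a$ with zero initial data. This is why $\norm{u_0}{L^\infty(\Omega_0)}$ enters both additively and inside $\alpha$ (through $b$ in the boundary condition for $a$), rather than merely as the base level of the iteration.

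The genuine gap is in Step~1 for $z$. Testing with $(z-ke^{\lambda t})_+$ and applying Gronwall yields at best an $L^\infty_{L^2(\Gamma)}$ bound on the truncation, not an $L^\infty_{L^\infty(\Gamma)}$ bound on $z$: the source $\delta_k^{-1}uw$ is \emph{not} in $L^\infty_{L^\infty(\Gamma)}$ (you do not yet know $u \in L^\infty$), so no energy/Gronwall argument closes to $L^\infty$. Your passing mention of ``a Stampacchia/absorption argument'' does not rescue this, because you have not identified where the superlinear power required for any such iteration comes from. What is actually available is only $uw \in L^2_{L^{2+\epsilon}(\Gamma)}$, obtained from $\norm{w}{L^\infty}$ together with the embedding $H^{1/2}(\Gamma) \hookrightarrow L^{2+\epsilon}(\Gamma)$ applied to the trace of $u \in L^2_{H^1(\Omega)}$; this gain in spatial integrability is the crucial missing ingredient. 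The paper exploits it through a full De Giorgi iteration (Lemma~\ref{lem:deGiorgiSecond}), where the superlinear recursion comes from Chebyshev plus the $Q(\Gamma)$-embedding of Lemma~\ref{lem:gn}, and the condition $d < 2+\epsilon$ is precisely what forces $d \leq 2$. So your claim that ``no De Giorgi iteration is needed here'' is wrong: both bounds require a genuine iteration, and in fact it is the $z$ estimate, via Lemma~\ref{lem:deGiorgiSecond}, that first imposes the dimension restriction $d\leq 2$.
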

\begin{remark}It is important to emphasise that the $L^\infty$ bounds do not depend on the chosen parametrisation $\mathbf{V}_p$ of the evolution of the spaces but only on the material velocity fields $\mathbf V_\Omega$ and $\mathbf{V}_\Gamma$. 
\end{remark}
\begin{remark}Even in the non-moving setting the proofs of the above $L^\infty$ bounds do not simplify as the technical difficulties are mainly due to the nonlinear coupling. We will explain more during the course of this paper.
\end{remark}
Using these $L^\infty$ bounds it is then easy to prove continuous dependence and uniqueness.
\begin{theorem}[Continuous dependence]\label{thm:uniqueness} Suppose there are two triples of solutions $(u_i, w_i, z_i)$ corresponding to two triples of initial data $(u_{i0}, w_{i0}, z_{i0})$ for $i=1$, $2$. Then for dimensions $d\leq 2$, the following continuous dependence result holds:
\begin{align*}
&\norm{u_1(t)-u_2(t)}{L^2(\Omega(t))} + \norm{w_1(t)-w_2(t)}{L^2(\Gamma(t))} + \norm{z_1(t)-z_2(t)}{L^2(\Gamma(t))}\\
&\quad\leq C\left(\norm{u_{10}-u_{20}}{L^2(\Omega_0)} + \norm{w_{10}-w_{20}(t)}{L^2(\Gamma_0)} + \norm{z_{10}-z_{20}(t)}{L^2(\Gamma_0)}\right),
\end{align*}
where $C=C(\delta_k,\delta_{k'}, \norm{j}{\infty},\norm{\nabla\cdot\mathbf{V}_\Omega}{\infty},\norm{\nabla_\Gamma\cdot\mathbf{V}_\Gamma}{\infty},\norm{u_2}{L^{\infty}_{L^{\infty}}},\norm{w_1}{L^{\infty}_{L^{\infty}}})$.
\end{theorem}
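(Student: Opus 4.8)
The plan is to carry out a standard energy estimate on the differences, exploiting the $L^\infty$ bounds from Theorems \ref{thm:existence} and \ref{thm:LinftyBounds} to control the nonlinear reaction terms. Write $\bar u := u_1 - u_2$, $\bar w := w_1 - w_2$, $\bar z := z_1 - z_2$, and similarly for the initial data. Subtracting the weak formulations of Definition \ref{defn:weakForm} for the two solutions, one obtains the equations satisfied by $(\bar u, \bar w, \bar z)$; the key point is that the reaction difference splits as $r(u_1,w_1,z_1) - r(u_2,w_2,z_2) = \tfrac{1}{\delta_{k'}}\bar z - \tfrac{1}{\delta_k}(u_1 w_1 - u_2 w_2)$, and the bilinear term can be written $u_1 w_1 - u_2 w_2 = \bar u\, w_1 + u_2\, \bar w$, so it is linear in the differences with coefficients bounded by $\norm{w_1}{L^\infty_{L^\infty}}$ and $\norm{u_2}{L^\infty_{L^\infty}}$ respectively. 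Testing the $\bar u$ equation with $\bar u$, the $\bar w$ equation with $\bar w$, and the $\bar z$ equation with $\bar z$, and summing, I would use the transport formula for evolving spaces (from Appendix \ref{sec:prelim}) to turn $\langle \dot{\bar u}, \bar u\rangle + \int_{\Omega(t)} \bar u^2 \grad\cdot\mathbf V_p$ into $\tfrac12\tfrac{d}{dt}\norm{\bar u(t)}{L^2(\Omega(t))}^2$ plus a harmless $\tfrac12\int \bar u^2\,\grad\cdot\mathbf V_p$ term, and similarly on $\Gamma(t)$.

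After this reduction the inequality to establish is of the form
\begin{align*}
\frac{d}{dt}\Big(\norm{\bar u(t)}{L^2(\Omega(t))}^2 + \norm{\bar w(t)}{L^2(\Gamma(t))}^2 + \norm{\bar z(t)}{L^2(\Gamma(t))}^2\Big) + c_0\Big(\norm{\grad \bar u}{L^2(\Omega(t))}^2 + \dots\Big) \le C\,\big(\text{sum of the three }L^2\text{ norms}\big),
\end{align*}
where the diffusion terms $\delta_\Omega\norm{\grad\bar u}{L^2(\Omega(t))}^2$ etc.\ appear with the right sign on the left, and the constant $C$ collects $\norm{\grad\cdot\mathbf V_p}{\infty}$, $\norm{\sgrad\cdot\mathbf V_p}{\infty}$, $\delta_k^{-1}$, $\delta_{k'}^{-1}$, $\norm{w_1}{L^\infty_{L^\infty}}$, and $\norm{u_2}{L^\infty_{L^\infty}}$. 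The advection terms $\int \grad\cdot(\mathbf J_\Omega \bar u)\bar u$ are handled by integration by parts (using $\grad\cdot\mathbf J_\Omega$ and $\mathbf J_\Omega\cdot\grad\bar u$) and Young's inequality, absorbing the gradient part into the diffusion; the boundary term $\int_{\Gamma(t)} j\bar u^2$ coming from the Robin condition is controlled by the trace inequality $\norm{\bar u}{L^2(\Gamma(t))}^2 \le \varepsilon\norm{\grad\bar u}{L^2(\Omega(t))}^2 + C_\varepsilon\norm{\bar u}{L^2(\Omega(t))}^2$, again absorbing the $\varepsilon$-part. The reaction term $-\tfrac{1}{\delta_k}\int_{\Gamma(t)}(\bar u\,w_1 + u_2\,\bar w)(\bar u - \bar w)$ (and its counterpart with sign reversed in the $\bar z$ equation) is bounded using $\norm{w_1}{L^\infty}$, $\norm{u_2}{L^\infty}$, the trace inequality on $\bar u$, and Young's inequality to reduce everything to the three $L^2$ norms plus absorbable gradient terms; the $\tfrac{1}{\delta_{k'}}\int_{\Gamma(t)}\bar z(\bar w - \bar z)$ terms are directly Cauchy--Schwarz.

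Once the differential inequality above is in place, Gr\"onwall's lemma gives
\[
\norm{\bar u(t)}{L^2(\Omega(t))}^2 + \norm{\bar w(t)}{L^2(\Gamma(t))}^2 + \norm{\bar z(t)}{L^2(\Gamma(t))}^2 \le e^{Ct}\Big(\norm{\bar u_0}{L^2(\Omega_0)}^2 + \norm{\bar w_0}{L^2(\Gamma_0)}^2 + \norm{\bar z_0}{L^2(\Gamma_0)}^2\Big),
\]
and taking square roots yields the claimed estimate (with uniqueness as the special case of equal initial data). The main obstacle, as flagged in the paper, is the nonlinear coupling through the boundary: the bilinear reaction term lives on $\Gamma(t)$ but $\bar u$ is a bulk quantity, so one must pass through the trace operator and be careful that the resulting $\norm{\bar u}{L^2(\Gamma(t))}$ factors are split via the trace inequality so that the $\norm{\grad\bar u}{L^2(\Omega(t))}$ contributions are genuinely absorbable into the (fixed, positive) diffusion coefficient $\delta_\Omega$ and do not accumulate a time-dependent coefficient in front of the dissipation. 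A secondary technical point is that all manipulations (the transport identity, integration by parts on $\Gamma(t)$, the trace inequality) must be justified on the evolving domains; since we are in the regime where the solutions are strong (Theorems \ref{thm:existence}--\ref{thm:LinftyBounds}) and the geometry is smooth, these are available from the framework of \cite{AESAbstract, AESApplications} and Appendix \ref{sec:prelim}.
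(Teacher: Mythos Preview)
Your proposal is correct and follows essentially the same route as the paper: split $u_1w_1-u_2w_2=\bar u\,w_1+u_2\,\bar w$, test each difference equation with the corresponding difference, use the $L^\infty$ bounds on $w_1$ and $u_2$ together with the interpolated trace inequality \eqref{eq:interpolatedTrace} to reduce the $\Gamma$-integrals involving $\bar u$ to absorbable gradient terms plus $L^2(\Omega)$ terms, and finish with Gronwall. The only cosmetic difference is that the paper combines the $\mathbf V_p$-divergence term with the $\mathbf J_\Omega$-advection via the identity \eqref{eq:bulkIBPidentity} (and its surface analogue) to produce directly the $\nabla\cdot\mathbf V_\Omega$ and $\sgrad\cdot\mathbf V_\Gamma$ dependencies stated in the theorem, whereas you bound the advection by Young's inequality; both work, but the former is what gives the constant its advertised dependence on the \emph{material} velocities rather than on $\mathbf V_p$ or $\mathbf J_\Omega$.
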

\begin{proof}
Denote the differences by $d^u = u_1-u_2$ and so on. It is easy to write the weak formulation satisfied by $d^u, d^w$ and $d^z$; then taking $d^u, d^w$ and $d^z$ as test functions respectively, writing
$u_1w_1 - u_2w_2 = (u_1-u_2)w_1 + u_2(w_1-w_2) = w_1d^u + u_2d^w$ and using the $L^{\infty}$ bounds on the right hand side, we obtain 
\begin{align*}
\frac{1}{2}\frac{d}{dt}\int_{\Omega(t)}|d^u|^2  + \frac{1}{2}\int_{\Omega(t)}|d^u|^2\grad \cdot \mathbf V_\Omega + \delta_\Omega \int_{\Omega(t)}|\nabla d^u|^2  
&\leq C\int_{\Gamma(t)}|d^u|^2 + |d^w|^2 + |d^z|^2
\end{align*}
\begin{align*}
\frac{1}{2}\frac{d}{dt}\int_{\Gamma(t)}|d^w|^2 + \frac 12\int_{\Gamma(t)}|d^w|^2\sgrad \cdot \mathbf V_\Gamma  + \delta_\Gamma \int_{\Gamma(t)}|\sgrad d^w|^2 
&\leq C\int_{\Gamma(t)}|d^u|^2 + |d^w|^2 + |d^z|^2
\end{align*}
\begin{align*}
\frac{1}{2}\frac{d}{dt}\int_{\Gamma(t)}|d^z|^2 + \frac 12\int_{\Gamma(t)}|d^z|^2\sgrad \cdot \mathbf V_\Gamma  + \delta_{\Gamma'} \int_{\Gamma(t)}|\sgrad d^z|^2
&\leq C\int_{\Gamma(t)}|d^u|^2 + |d^w|^2 + |d^z|^2.
\end{align*}
Combining these three inequalities and moving the velocity terms onto the right hand side yields 
\begin{align*}
&\frac{d}{dt}\left(\int_{\Omega(t)}|d^u|^2  + \int_{\Gamma(t)}|d^w|^2  + \int_{\Gamma(t)}|d^z|^2\right) + 2\delta_\Omega \int_{\Omega(t)}|\nabla d^u|^2 \\
&\quad\leq C_1\int_{\Gamma(t)} |d^z|^2 + |d^w|^2 + \int_{\Omega(t)}C_2(\epsilon)|d^u|^2 + \epsilon |\nabla d^u|^2
\end{align*}
using the interpolated trace inequality \eqref{eq:interpolatedTrace} from Appendix \ref{sec:prelim}. We conclude by choosing $\epsilon< 2\delta_\Omega$ and applying Gronwall's lemma.
\end{proof}
Note the regularity for $w$ and $z$ given by Theorem \ref{thm:existence}: they are strong solutions with the corresponding equations holding pointwise almost everywhere in time.  Regularity for $u$ is a much more delicate matter. The inhomogeneous Robin boundary condition is the source of the problem; surprisingly maximal regularity for such types of boundary conditions for parabolic problems with time-dependent elliptic operators have only somewhat recently been comprehensively answered, see the work of Denk, Hieber and Pr\"uss \cite{Pruss}. See also \cite{Weidemaier2002, Nittka}. Unfortunately, it is not clear that the coefficients in our boundary condition have the desired smoothness to apply the theorems in these papers. Therefore, in \S \ref{sec:strongSolutionsForu}, we shall argue in a different way, making use of the $L^\infty$ bound on $u$ obtained in Theorem \ref{thm:LinftyBounds}.
\begin{theorem}[Regularity for $u$]\label{thm:uStrongSolution}
For dimensions $d \leq 2,$ 
the time derivative of $u$ satisfies $\dot u \in L^2_{L^2(\Omega)}(\tau, T)$ for every $\tau > 0$. That is, $u$ is a strong solution. In the non-moving case, this means that $u' \in L^2(\tau, T; L^2(\Omega_0))$.
\end{theorem}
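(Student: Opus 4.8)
The plan is to regularise the $u$-equation in time by a Steklov-averaging argument performed on the fixed reference domain $\Omega_0$: one derives a bound on the time-difference quotients of the pulled-back solution that is uniform in the averaging parameter, and then passes to the limit. Two ingredients are used essentially: the $L^\infty$ bound on $u$ from Theorem~\ref{thm:LinftyBounds} (this is the source of the restriction $d\le 2$), and the fact that $w,z\in H^1_{L^2(\Gamma)}$, i.e.\ $\dot w,\dot z\in L^2_{L^2(\Gamma)}$, from Theorem~\ref{thm:existence}. The structural observation that makes everything work is to \emph{absorb the part of the boundary datum that is linear in $u$ into the elliptic operator}: since $r(u,w,z)=\frac{1}{\delta_{k'}}z-\frac{1}{\delta_k}uw$, the Robin condition $\delta_\Omega\nabla u\cdot\nu-ju=r(u,w,z)$ is equivalent to
\[\delta_\Omega\nabla u\cdot\nu+b\,u=\tfrac{1}{\delta_{k'}}z\quad\text{on }\Gamma(t),\qquad b:=\tfrac{1}{\delta_k}w-j ,\]
so the only genuinely inhomogeneous boundary term left is $\frac{1}{\delta_{k'}}z$, which has the time regularity $\dot z\in L^2_{L^2(\Gamma)}$, whereas the new Robin coefficient $b$ is merely bounded, with $\dot b$ inheriting only the $L^2_{L^2(\Gamma)}$ regularity of $\dot w$. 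Pulling back by $\phi_{-t}$ recasts the weak formulation for $u$ as an abstract parabolic equation $\hat u'+\mathcal A(t)\hat u=\mathcal F(t)$ on $\Omega_0$, where $\mathcal A(t):H^1(\Omega_0)\to H^1(\Omega_0)^*$ is associated with a time-dependent bilinear form $\tilde a(t;\cdot,\cdot)$ carrying the pulled-back diffusion matrix, the advection-type lower-order terms, and the boundary bilinear form $(\hat u,\hat\eta)\mapsto\int_{\Gamma_0}\hat b\,\hat u\,\hat\eta\,m$ ($m$ being the surface-measure Jacobian), while $\mathcal F(t)$ is the transported boundary source built from $\hat z=\phi_{-t}z$. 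One checks that $\tilde a$ is uniformly bounded on $H^1(\Omega_0)$ and coercive up to a multiple of the $L^2(\Omega_0)$-norm (the boundary term being lower-order by an interpolated trace inequality), that every coefficient of $\tilde a$ \emph{except} $\hat b$ is Lipschitz in $t$ (from the $C^2/C^3$ regularity of $\Phi$ and $\mathbf V_p\in C^1$), and --- decisively --- that its symmetric principal part $\tilde a_s$ satisfies, for time-dependent $v$ (differentiating only the coefficients),
\[\int_0^T\bigl|\partial_t\tilde a_s(t;v(t),v(t))\bigr|\,dt\;\le\;C\|v\|_{L^2(0,T;H^1(\Omega_0))}^2+C\bigl(1+\|\dot w\|_{L^2_{L^2(\Gamma)}}\bigr)\|v\|_{L^\infty(0,T;L^\infty(\Gamma_0))}^2 ,\]
i.e.\ the one non-Lipschitz factor $\dot w$ enters only against the \emph{square} of a boundary trace.

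Fix $\tau>0$ and choose $\sigma\in(0,\tau)$. Averaging the pulled-back equation over $(t,t+h)$ gives an identity for $\hat u_h(t):=\frac1h\int_t^{t+h}\hat u$ in which the time derivative is the classical $\partial_t\hat u_h(t)=\frac1h(\hat u(t+h)-\hat u(t))\in L^2(\sigma,T-h;H^1(\Omega_0))$. Testing with $\hat\eta=\partial_t\hat u_h(t)$ and integrating over $t\in(\sigma,T-h)$, the time-derivative term produces the good quantity $\int_\sigma^{T-h}\|\partial_t\hat u_h\|_{L^2(\Omega_0)}^2$. The bilinear-form contribution $\int_\sigma^{T-h}\bigl(\tilde a(\cdot;\hat u(\cdot),\partial_t\hat u_h(t))\bigr)_h\,dt$ must be converted, up to lower-order terms absorbed by Young's inequality, into $\bigl[\tfrac12\tilde a_s(t;\hat u_h,\hat u_h)\bigr]_\sigma^{T-h}-\tfrac12\int_\sigma^{T-h}\partial_t\tilde a_s(t;\hat u_h,\hat u_h)\,dt$; doing this rigorously for a \emph{time-dependent} form is exactly where the Steklov-type estimates advertised in the introduction are needed, to control the commutators between the Steklov average and $\tilde a$ by the Lipschitz-in-$t$ property of the geometric coefficients. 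Within this step, the endpoint at $T-h$ has the favourable sign by coercivity and is dropped; the endpoint at $\sigma$ is bounded by picking $\sigma$ via an averaging argument over a subinterval of $(0,\tau)$ so that $\|\hat u_h(\sigma)\|_{H^1(\Omega_0)}$ remains bounded as $h\to0$; and the troublesome term $\int_\sigma^{T-h}\partial_t\tilde a_s(t;\hat u_h,\hat u_h)\,dt$ stays finite uniformly in $h$ by the displayed bound together with $\|\hat u_h\|_{L^\infty(0,T;L^\infty(\Gamma_0))}\le\|\hat u\|_{L^\infty_{L^\infty(\Omega)}}$. Finally, the boundary source, which contributes $\frac{1}{\delta_{k'}}\int_\sigma^{T-h}\int_{\Gamma_0}(\hat z\,m)_h\,\partial_t\hat u_h$, cannot be bounded directly because the $L^2(\Gamma_0)$-norm of the trace of $\partial_t\hat u_h$ is not controlled by $\|\partial_t\hat u_h\|_{L^2(\Omega_0)}$; instead one integrates by parts in time, moving $\partial_t$ off $\hat u_h$ onto $(\hat z\,m)_h$, so that the interior term is bounded by $\|\partial_t(\hat z\,m)\|_{L^2(0,T;L^2(\Gamma_0))}\|\hat u_h\|_{L^2(0,T;L^2(\Gamma_0))}$ --- finite because $\dot z\in L^2_{L^2(\Gamma)}$ --- and the endpoint terms by the $L^\infty$ bound on $u$ and the choice of $\sigma$.

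Collecting these estimates yields $\int_\sigma^{T-h}\|\partial_t\hat u_h\|_{L^2(\Omega_0)}^2\le C(\tau)$ uniformly in $h$ (and, incidentally, $\hat u\in L^\infty(\sigma,T;H^1(\Omega_0))$). Letting $h\to0$ gives $\partial_t\hat u_h\rightharpoonup\hat u'$ in $L^2(\sigma,T;L^2(\Omega_0))$, hence $\hat u'\in L^2(\sigma,T;L^2(\Omega_0))$; transporting back (the pull-back being a homeomorphism compatible with the weak material derivative) gives $\dot u\in L^2_{L^2(\Omega)}(\tau,T)$, and since $\tau>0$ was arbitrary this is the claim. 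Finally, reading the first line of \eqref{eq:modelALE} as $\delta_\Omega\Delta u=\dot u+u\,\nabla\cdot\mathbf V_p+\nabla\cdot(\mathbf J_\Omega u)$, the right-hand side lies in $L^2_{L^2(\Omega)}(\tau,T)$ (using $u\in L^\infty_{L^\infty(\Omega)}$ and $\nabla u\in L^2_{L^2(\Omega)}$), so $\Delta u\in L^2_{L^2(\Omega)}(\tau,T)$ too, every term of the system is an $L^2$ function, and the equations --- together with the Robin condition in the trace sense --- hold pointwise almost everywhere; that is, $u$ is a strong solution.

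The step I expect to be the main obstacle is precisely the handling of the time dependence in $\tilde a$ in the presence of a Robin coefficient $b=\frac{1}{\delta_k}w-j$ of low time regularity: because $\dot w$ lies only in $L^2_{L^2(\Gamma)}$ and is not bounded, $\tilde a$ does not have Lipschitz-in-time coefficients, the maximal-regularity theory of \cite{Pruss,Weidemaier2002,Nittka} does not apply off the shelf, and the bespoke Steklov estimates must be developed (these are the \emph{auxiliary results of independent interest}). The scheme closes only because, having moved the $u$-linear part of the datum into the operator, the unavoidable appearance of $\dot w$ is against the boundary square $\hat u_h^2$, which the $L^\infty$ bound of Theorem~\ref{thm:LinftyBounds} makes integrable in time; had $r(u,w,z)$ been left on the right-hand side, the corresponding term would have been $\dot w$ paired with $\hat u_h$ times a difference quotient of the trace of $\hat u$, i.e.\ essentially the object one is trying to bound, and the argument would be circular. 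The remaining difficulties are routine but delicate: the estimation of the Steklov commutators produced by the time dependence of $\tilde a$, and the management of the two time endpoints --- near $t=0$, where only $u_0\in L^\infty$ is available (forcing $\tau>0$), and near $t=T$, where the sign of the energy is used.
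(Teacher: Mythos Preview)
Your proposal is correct and follows essentially the same strategy as the paper's proof in \S\ref{sec:strongSolutionsForu}: pull back to $\Omega_0$, Steklov-average, test with the discrete time derivative, handle the time-dependent coefficients via commutator-type estimates (these are the paper's Lemmas~\ref{lem:basicIBP}--\ref{lem:boundednessOfL}), and on the boundary integrate by parts in time so that $\dot w$ ends up paired with $u_h^2$ and is controlled by the $L^\infty$ bound on $u$. The only differences are organisational: the paper keeps the full boundary nonlinearity on the right-hand side rather than absorbing the $u$-linear piece as a Robin coefficient (arriving at the identical key estimate via $w(t{+}h)\,\partial_t u_h\,u_h=\tfrac12 w(t{+}h)\,\partial_t(u_h^2)$), and uses a smooth cutoff $\xi\in C_c^\infty((0,T])$ vanishing near $t=0$ in place of your averaging choice of $\sigma$.
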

We shall also consider exponential convergence of the solution to equilibrium in the stationary setting (see system \eqref{eq:modelStationary}) when the reaction rates are equal and when the diffusivity for the $u$ equation is $\delta_\Omega=1$. 
Associated to \eqref{eq:modelStationary} one can define the natural \emph{entropy functional}
\begin{equation}\label{eq:entropy}
E(u,w,z) := \int_\Omega u(\log u -1) + \int_\Gamma w(\log w - 1) +\int_\Gamma z(\log z-1)
\end{equation}
and its non-negative \emph{dissipation}
\begin{align}
D(u,w,z) &:= -\frac{d}{dt}E(u,w,z) =  \int_\Omega \frac{|\nabla u|^2}{u} + \delta_\Gamma \int_\Gamma \frac{|\sgrad w|^2}{w} + \delta_{\Gamma'}\int_\Gamma \frac{|\sgrad z|^2}{z} + \int_{\Gamma}(uw-z)\log\left(\frac{u w}{z}\right).\label{eq:dissipation}
\end{align}
Let $u_\infty$, $w_\infty$ and $z_\infty$ be the unique non-negative constants determined by the system \eqref{eq:algebraic} on page~\pageref{eq:algebraic}. In \S \ref{sec:equil}, following the methodology of Fellner and Laamri \cite{FellnerLaamri}, we will prove the exponential convergence of solutions of \eqref{eq:modelStationary} to $(u_\infty, w_\infty, z_\infty)$ using entropy methods. The main idea is to relate in a useful way the relative entropy $E(u,w,z)-E(u_\infty, w_\infty, z_\infty)$ to the entropy dissipation and then to (prove and) use a lower bound on the relative entropy in terms of $L^1$ distances to the equilibrium (in the sense of the Csiszar--Kullback--Pinsker inequality). We refer the reader to \cite{jungel} for a detailed introduction to entropy methods in PDEs and also the paper \cite{DesFellner} and references therein for an exposition of entropy methods for reaction-diffusion equations. Equilibrium convergence for systems of the form \eqref{eq:otherModel} where all equations are on the same domain have been studied in the literature eg. \cite{DesFellner, FellnerLaamri}, whilst a two-component system with a heat equation on a domain and a surface heat equation on its boundary coupled through a nonlinear Robin-type boundary condition was analysed in \cite{BaoFellnerLatos}.
\begin{theorem}[Convergence to steady state]\label{thm:equilibrium}
For dimensions $d\leq 2$ and if $\delta_{k}=\delta_{k'}=\delta_\Omega=1$, the solution $(u,w,z)$ of the system \eqref{eq:modelStationary} converges to $(u_\infty, w_\infty, z_\infty)$ as $t \to \infty$ in the following sense:
\begin{align*}
\norm{u(t)- u_\infty}{L^1(\Omega)}^2 + \norm{w(t)-w_\infty}{L^1(\Gamma)}^2 + \norm{z(t)-z_\infty}{L^1(\Gamma)}^2 
&\leq C e^{-Kt}(E(u_0,w_0,z_0) -E(u_\infty, w_\infty, z_\infty))
\end{align*}
where $C$ and $K$ are positive constants.
\end{theorem}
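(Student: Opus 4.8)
The plan is to follow the entropy method (following Fellner--Laamri \cite{FellnerLaamri} and the general scheme in \cite{DesFellner}). Recall that with $\delta_k = \delta_{k'} = \delta_\Omega = 1$ we have the conserved masses $M_1, M_2$ from \eqref{eq:conservationMass}, and the equilibrium $(u_\infty, w_\infty, z_\infty)$ consists of positive constants satisfying $u_\infty w_\infty = z_\infty$ together with the two mass constraints $u_\infty |\Omega| + z_\infty |\Gamma| = M_1$ and $w_\infty|\Gamma| + z_\infty|\Gamma| = M_2$; one checks this system \eqref{eq:algebraic} has a unique positive solution by a monotonicity argument in one variable. The key identity is the \emph{entropy--entropy dissipation} (EED) estimate: I would aim to show there is a constant $\lambda > 0$, depending only on the domain, the diffusivities $\delta_\Gamma, \delta_{\Gamma'}$, and the masses, such that
\begin{equation*}
E(u,w,z) - E(u_\infty, w_\infty, z_\infty) \leq \frac{1}{\lambda} D(u,w,z)
\end{equation*}
for all admissible $(u,w,z)$ (nonnegative, with the right masses, and in the relevant function spaces). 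Granting this, Gronwall applied to $t \mapsto E(u(t),w(t),z(t)) - E_\infty$, whose derivative is $-D \leq -\lambda(E - E_\infty)$, gives exponential decay $E(u(t)) - E_\infty \leq e^{-\lambda t}(E(u_0) - E_\infty)$. Finally a Csiszar--Kullback--Pinsker-type inequality — $\|u - u_\infty\|_{L^1(\Omega)}^2 + \|w - w_\infty\|_{L^1(\Gamma)}^2 + \|z - z_\infty\|_{L^1(\Gamma)}^2 \leq C(E(u,w,z) - E_\infty)$, valid because the relative entropy controls the squared $L^1$ distance on a finite measure space — converts this into the stated bound with $K = \lambda$.

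For the EED estimate itself I would split the relative entropy into a "diffusive" part and a "reactive" part. Writing $\bar u = \frac{1}{|\Omega|}\int_\Omega u$, $\bar w = \frac{1}{|\Gamma|}\int_\Gamma w$, $\bar z = \frac{1}{|\Gamma|}\int_\Gamma z$ for the spatial means, one first bounds the relative entropy of each species against its mean (e.g. $\int_\Omega u \log(u/\bar u) \leq C \int_\Omega |\nabla u|^2/u$ via a logarithmic Sobolev / Poincaré-type inequality applied to $\sqrt u$, and similarly on $\Gamma$), which absorbs the first three terms of $D$. It then remains to control the relative entropy of the constant vector $(\bar u, \bar w, \bar z)$ against the equilibrium constants $(u_\infty, w_\infty, z_\infty)$ using the "reaction" term $\int_\Gamma (uw - z)\log(uw/z)$ together with the two mass conservation laws. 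This is a finite-dimensional convexity estimate: the function $\Phi(a,b,c) = a(\log a - 1) + b(\log b - 1) + c(\log c -1)$ restricted to the affine subspace cut out by the mass constraints is strictly convex with minimum at the equilibrium, and one shows its excess is controlled by $(\bar u \bar w - \bar z)\log(\bar u\bar w/\bar z)$ plus terms already absorbed; the passage from $\int_\Gamma(uw-z)\log(uw/z)$ to the mean-field quantity $(\bar u\bar w - \bar z)\log(\bar u\bar w/\bar z)$ uses that $uw$ is close to $\bar u\,\bar w$ in $L^1$ because $u$ is close to $\bar u$ and $w$ to $\bar w$ — here the $L^\infty$ bounds from Theorems \ref{thm:existence} and \ref{thm:LinftyBounds} are essential to keep all constants uniform in time, and one also needs the trace of $u$ on $\Gamma$, controlled via the trace inequality of Appendix \ref{sec:prelim} together with $\int_\Omega |\nabla u|^2/u$ and the $L^\infty$ bound.

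The main obstacle I expect is precisely this reactive/mean-field step — i.e. obtaining a functional inequality of the form
\begin{equation*}
\Phi(\bar u, \bar w, \bar z) - \Phi(u_\infty, w_\infty, z_\infty) \leq C\left(\int_{\Gamma}(uw-z)\log\!\left(\frac{uw}{z}\right) + \int_\Omega \frac{|\nabla u|^2}{u} + \delta_\Gamma\int_\Gamma \frac{|\sgrad w|^2}{w} + \delta_{\Gamma'}\int_\Gamma\frac{|\sgrad z|^2}{z}\right)
\end{equation*}
with $C$ independent of $t$. The subtlety is that $u$ lives in the bulk $\Omega$ but the reaction couples only the trace $u|_\Gamma$ with the surface quantities $w,z$, so the bulk-surface geometry must be handled carefully (one cannot simply quote the standard same-domain results of \cite{DesFellner, FellnerLaamri}); this is the analogue of the difficulty faced in \cite{BaoFellnerLatos} for the two-component Robin-coupled system. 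A degenerate case to watch is when one of $\bar u, \bar w, \bar z$ is close to zero, where $\log$ blows up; here one argues that the mass constraints plus the conserved totals $M_1, M_2 > 0$ bound the means away from $0$ and from above, so all logarithmic factors stay in a compact range and the inequality reduces, by a compactness/continuity argument, to strict convexity of $\Phi$ on the constraint set. Once this lemma is in place, assembling the pieces, integrating the entropy dissipation inequality, and invoking CKP is routine; uniform integrability/regularity of the entropy-dissipation identity along the flow (justifying $\frac{d}{dt}E = -D$) follows from the strong-solution regularity of Theorems \ref{thm:existence} and \ref{thm:uStrongSolution} together with the positivity of the components, after a standard approximation (replacing $\log u$ by $\log(u+\varepsilon)$ and passing to the limit).
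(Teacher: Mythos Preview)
Your overall scheme --- establish an entropy--entropy dissipation (EED) inequality $D \geq K(E-E_\infty)$, apply Gronwall, then CKP --- is exactly the one the paper follows; the decomposition into a ``diffusive'' log-Sobolev part and a ``reactive'' mean-field part is also the right picture.

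There is, however, a genuine gap in your treatment of the degenerate case. You claim that ``the mass constraints plus the conserved totals $M_1, M_2 > 0$ bound the means away from $0$''. This is false: the constraints $|\Omega|\bar u + |\Gamma|\bar z = M_1$ and $|\Gamma|(\bar w + \bar z) = M_2$ allow any single mean to be arbitrarily small (e.g.\ $\bar u \to 0$ while $\bar z \to M_1/|\Gamma|$). So your compactness/continuity argument for the finite-dimensional step does not go through as stated, and this is precisely the delicate point. The paper resolves it differently: it passes to square-root variables $U=\sqrt u$, $W=\sqrt w$, $Z=\sqrt z$, uses the elementary bound $(uw-z)\log(uw/z)\geq 4(UW-Z)^2$, and then proves a quadratic lemma (Lemma~\ref{lem:equilibriumFunctions}) controlling $\norm{U-U_\infty}{L^2}^2 + \norm{W-W_\infty}{L^2}^2 + \norm{Z-Z_\infty}{L^2}^2$ by $\norm{UW-Z}{L^2(\Gamma)}^2$ plus deviation-from-mean terms plus $\rho\norm{\nabla U}{L^2(\Omega)}^2$. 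The degenerate case (one of $\overline{U^2},\overline{W^2},\overline{Z^2}\leq\epsilon^2$) is handled by observing that then the \emph{complementary} mean in the conservation law is bounded \emph{below}, which forces $|\bar U\bar W - \bar Z|^2$ to exceed a fixed positive constant; since the left-hand side is always bounded above by $2M_1+4M_2$, the inequality holds trivially in that regime.

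A second, smaller point: you lean on the $L^\infty$ bounds of Theorems~\ref{thm:existence} and~\ref{thm:LinftyBounds} to control products and keep constants uniform. The paper explicitly avoids this --- the EED and CKP inequalities are proved purely from the mass constraints and Sobolev/Poincar\'e/log-Sobolev inequalities, with no $L^\infty$ input. The only reason the restriction $d\leq 2$ enters the convergence theorem is that Theorem~\ref{thm:uStrongSolution} (strong solutions for $u$) is needed to justify the pointwise identity $\frac{d}{dt}E = -D$; the entropy inequalities themselves do not use the $L^\infty$ bounds at all. Your route via $L^\infty$ would make the decay constant $K$ depend on those bounds (hence on the initial data beyond its mass), contrary to your own claim that $K$ depends only on the domain, diffusivities and masses.
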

The restriction to dimensions $d\leq 2$ above is not because we use $L^\infty$ bounds for $u$ and $z$ in the course of the proof but because we need the $u$ equation to hold pointwise almost everywhere, which is guaranteed by Theorem \ref{thm:uStrongSolution}.

\subsection{Application to biology}\label{sec:bio}
Systems of equations of the form \eqref{eq:modelStationary} arise in the mathematical modelling of biological and chemical processes in cells (in particular those with interactions between processes on the cell membrane and processes inside or outside the cell) such as cell motility and chemotaxis \cite{Marth2014, MacKenzie} and cell signalling processes \cite{Ratz2012, RatzRogerSymmetry, BioPaper}. The modelling of ligand-receptor dynamics is particularly pertinent for our model: $u$ may represent the concentration of ligands in the extracellular volume $\Omega$, whilst $w$ and $z$ may be respectively the concentrations of surface receptors on $\Gamma$ and of ligand-receptor complexes formed by the binding of $u$ with $w$. This justifies the choice of the reaction term $r$ in \eqref{reaction}: we expect non-negative solutions $u$, $w,$ and $z$ so formally, when $u$ and $w$ combine, the complex $z$ is increased whilst the receptor $w$ and ligand $u$ are decreased. The non-trivial boundary condition in \eqref{eq:model}, a combination of a diffusive flux and an advective flux, models the so-called \emph{windshield effect} well known in mathematical biology. The idea is that the front of the moving cell $\Gamma$ will come into contact with ligands in $\Omega$ and thus there will be a greater binding onto $\Gamma$, whilst at the back of the cell complexes tend to dissociate into receptors and ligands. This effect was also taken into account in \cite{MacKenzie} in a similar geometric setting.

With particular choices of the various velocity fields in the model, we end up with special cases that are worth mentioning.
\begin{itemize}
\item We have already mentioned the case $\mathbf V_\Omega =\mathbf V_\Gamma = \mathbf V = \mathbf V_o \equiv 0$ where there is no evolution and no material processes, which results in the system \eqref{eq:modelStationary}. This is the usual geometric setting in which many related models (referenced in the introduction) are treated.
\item Choosing $\mathbf V_\Omega \equiv 0$ we end up in the physical setting of \cite{MacKenzie} where there is only a material velocity on $\Gamma$.
\item If $\mathbf V_\Gamma \cdot \nu = \mathbf V_\Omega \cdot \nu$ on $\Gamma$, then there is no windshield effect. This make sense because, referring to the explanation of the effect given above, if the material points in $\Omega$ are also moving with the normal velocity of $\Gamma$ near the surface $\Gamma$, one clearly cannot expect advection on or off the surface since the ligands are also moving with the same velocity.
\end{itemize}
Regarding particular applications already modelled in literature, in \cite{BioPaper}, the authors consider models similar to 
\begin{equation}\label{eq:bioModel}
\begin{aligned}
\dot u + u\grad \cdot \mathbf V_\Omega - D_L\Delta u &= 0 &&\text{on $\Omega$}\\
D_L\nabla u \cdot \nu + u(\mathbf V_\Gamma \cdot \nu - \mathbf V_\Omega \cdot \nu) &= -k_{on}uw + k_{off}z &&\text{on $\Gamma$}\\
D_L\nabla u \cdot \nu &=0 &&\text{on $\partial_0 \Omega$}\\
\dot w+ w\sgrad \cdot \mathbf V_\Gamma - D_\Gamma \slap w &= -k_{on}uw + k_{off}z &&\text{on $\Gamma$}\\
\dot z+ z\sgrad \cdot \mathbf V_\Gamma - D_{\Gamma'} \slap z &= k_{on}uw - k_{off}z &&\text{on $\Gamma$}
\end{aligned}
\end{equation}
but with no velocities and on a stationary spherical domain in two and three dimensions. This system describes ligand-receptor dynamics in cells where the interaction between extracellular ligands $u$ and cell surface receptors $w$ leads (reversibly) to the creation of ligand-receptor complexes $z$. Here the various parameters appearing above are dimensional constants obtain from experimental data relating to a particular application. In Appendix \ref{sec:nondim} we will prove that non-dimensionalising \eqref{eq:bioModel} gives rise to the system \eqref{eq:modelALE} with the parametrisation velocity $\mathbf V_p$ chosen to coincide with the material velocities.


\subsection{Outline of the paper} 
In \S \ref{sec:existence} we prove existence of solutions (Theorem \ref{thm:existence}). The proof is divided in several steps which will be dealt with in separate subsections. We establish in \S \ref{sec:boundednessOfUandZ} the $L^{\infty}$ bounds on both $z$ and $u$, in this order (Theorem \ref{thm:LinftyBounds}) after proving a lemma that is used to show boundedness for $w$. The proofs are rather technical and are based on the De Giorgi $L^2$-$L^{\infty}$ method. In \S \ref{sec:strongSolutionsForu} we prove that the solution $u$ whose existence was established in \S \ref{sec:existence} is actually a strong solution (Theorem \ref{thm:uStrongSolution}). The proof is divided in several steps, each devoted to bounding a different term on the weak pulled-back formulation. In \S \ref{sec:equil} we deal with the stationary setting, when all the velocity fields are zero, and prove the exponential convergence to equilibrium (Theorem \ref{thm:equilibrium}). We conclude in \S \ref{sec:conclusion} with some open issues.

Appendix \ref{sec:derivation} contains the derivation of the system \eqref{eq:model}. In Appendix \ref{sec:prelim} we state some classical results in the form they will be used later on. We prove some auxiliary lemmas regarding interpolation inequalities and we also establish some basic but useful calculus identities that will be used throughout the paper. Finally, in Appendix \ref{sec:nondim} we give the details of the non-dimensionalisation of the system \eqref{eq:bioModel}.

\section{Existence}\label{sec:existence}

In this section we prove existence of solution to problem \eqref{eq:modelALE}. This will be established by following these steps:
\begin{itemize}
\item \textsc{Step 1.} Prove existence for the doubly truncated problem
\begin{equation}\label{eq:fullTruncatedProblem}
\begin{aligned}
\dot u + u\grad \cdot \mathbf V_p - \delta_\Omega \Delta u + \grad \cdot (\mathbf{J}_\Omega u)&=0\\
\delta_\Omega\nabla u \cdot \nu -  ju &=  \frac{1}{\delta_{k'}}T_n(z)-\frac{1}{\delta_k}uT_m(w^+)\\
\dot w + w\sgrad \cdot \mathbf V_p - \delta_\Gamma \slap w + \sgrad \cdot (\mathbf{J}_{\Gamma} w) &=  \frac{1}{\delta_{k'}}T_n(z)-\frac{1}{\delta_k}uT_m(w^+)\\
\dot z + z\sgrad \cdot \mathbf V_p - \delta_{\Gamma'}\slap z + \sgrad \cdot (\mathbf{J}_{\Gamma} z) &= \frac{1}{\delta_k}uT_m(w^+) - \frac{1}{\delta_{k'}}T_n(z)
\end{aligned}
\end{equation}
where $T_n(x) = \min(n, x) + \max(-n,x) -x $ is the usual truncation at height $n$
\item \textsc{Step 2.} Prove non-negativity of solutions to \eqref{eq:fullTruncatedProblem}
\item \textsc{Step 3.} Pass to the limit $n \to \infty$ in \eqref{eq:fullTruncatedProblem} and so removing the truncation on $z$
\item \textsc{Step 4.} Find an $L^\infty$ bound on $w$ so that we can set $m:= \norm{w}{\infty}$ and remove the truncation on $w$. This concludes the existence.
\end{itemize}
\subsection{Existence for the truncated problem}
We will now show that the truncated problem \eqref{eq:fullTruncatedProblem} has a solution $(u,w,z) \in H^1_{H^{1}(\Omega)^*}\cap L^2_{H^1(\Omega)} \times (H^1_{L^2(\Gamma)}\cap L^2_{H^1(\Gamma)})^2.$ Before we proceed let us just introduce some notation to ease readability. 
Define 
\[W_\Omega = H^1_{H^1(\Omega)^*}\cap L^2_{H^1(\Omega)}\qquad\text{with}\qquad \norm{u}{W_\Omega}^2 := \norm{u}{L^2_{H^1(\Omega)}}^2 + \norm{\dot u}{L^2_{H^1(\Omega)^*}}^2\]
and
\[W_\Gamma = H^1_{L^2(\Gamma)}\cap L^2_{H^1(\Gamma)}\qquad\text{with}\qquad \norm{w}{W_\Gamma}^2 := \norm{w}{L^2_{H^1(\Gamma)}}^2 + \norm{\dot w}{L^2_{L^2(\Gamma)}}^2.\] 
Let now $f, g \in L^2_{L^2(\Gamma)}$ be arbitrary and consider
\begin{align*}
\dot u + u\grad \cdot \mathbf V_p - \delta_\Omega \Delta u + \grad \cdot (\mathbf{J}_\Omega u) &=0\\
\delta_\Omega\nabla u \cdot \nu &=  \delta_{k'}^{-1}T_n(g) - \delta_k^{-1}uT_m(f^+) + ju\\
u(0) &= u_0.
\end{align*}
This is a linear problem so by \cite[Theorem 3.6]{AESAbstract} it has a unique solution $u \in W_\Omega$ which we will write as $u=U(f,g)$, where $U \colon (L^2_{L^2(\Gamma)})^2 \to W_\Omega$. Indeed, one can apply a Galerkin argument to prove the existence: pushing forward an orthogonal basis $\{b_j\}_{j \in \mathbb{N}}$ of $H^1(\Omega_0)$ with the inverse of $\phi_t$ (defined in section \ref{sec:weakFormulation}) will result in a basis $\{b_j(t)\}_{j \in \mathbb{N}}$ for $H^1(\Omega(t))$ that satisfies the useful \emph{transport property} $\dot b_j(t) = 0$. Then one uses the coercivity and boundedness of the elliptic bilinear form that arises in the weak formulation of the equation to obtain energy estimates, and a modification of the standard Galerkin argument yields the result. Alternatively, one could also make use of the Banach--Ne\v{c}as--Babu\v{s}ka theorem which can be thought of as a generalisation to mixed bilinear forms of the Lax--Milgram theory. Testing with $u$, we easily obtain a bound on $u$ in $L^2_{H^1(\Omega)}$ independent of $f$ and $g$. Taking the supremum of the duality pairing of $\dot u$ with a test function, we also obtain a bound on the weak time derivative. Combining, we find
\begin{equation}\label{eq:4}
\norm{U(f,g)}{W_\Omega}
\leq C_1
\end{equation}
with the constant independent of $f$ and $g$. Then we define $w=w(f,g)$ and $z=z(f,g)$ as the solutions in $W_\Gamma$ of 
\begin{align*}
\dot w + w\sgrad \cdot \mathbf V_p  - \delta_\Gamma \slap w + \sgrad \cdot (\mathbf{J}_{\Gamma} w) &= \delta_{k'}^{-1}T_n (g) - \delta_k^{-1}U(f,g)T_m(f^+)\\
\dot z + z\sgrad \cdot \mathbf V_p  - \delta_{\Gamma'} \slap z +\sgrad \cdot (\mathbf{J}_{\Gamma} z) &= \delta_k^{-1}U(f,g)T_m(f^+) - \delta_{k'}^{-1}T_n (g) \\
(w(0), z(0)) &= (w_0,z_0)
\end{align*}
which exist now by \cite[Theorem 3.13]{AESAbstract}. Define a mapping $\Theta \colon (L^2_{L^2(\Gamma)})^2 \to W_\Gamma^2$ taking the functions $f$ and $g$ to the solutions by $\Theta(f,g) = (w,z)$. We seek a fixed point of $\Theta$. Firstly, note that testing with $w$ and $\dot w$ and $z$ and $\dot z$ in the respective equations and using the bound on $U(f,g)$, we again obtain
\[\norm{w}{W_\Gamma} + \norm{z}{W_\Gamma} \leq C_2\]
with $C_2$ independent of $f$ and $g$. Therefore, defining the set 
\[E := \{(f,g) \in W_\Gamma^2 :  \norm{f}{W_\Gamma} + \norm{g}{W_\Gamma} \leq C_2\},\]
we have that $\Theta\colon E \to E$. Let us check that $\Theta$ is weakly continuous. For this purpose, let $(f_k, g_k) \weaklyto (f,g)$ in $W_\Gamma^2$ with $(f_k,g_k) \in E$. Defining $(w_k, z_k) = \Theta(f_k, g_k)$, we see that $(w_k, z_k)$ lies in $E$ and therefore $(w_k, z_k) \weaklyto (w,z)$ in $W_\Gamma^2$ for some $w$ and $z$, and we need to show that $\Theta(f,g) = (w,z)$. Define also $u_k = U(f_k, g_k)$ which again by \eqref{eq:4} is bounded independent of $k$ so $u_k \weaklyto u$ in $W_\Omega$. We need to pass to the limit in the weak formulation of following system
\begin{align*}
\dot u_k + u_k\grad \cdot \mathbf V_p - \delta_\Omega \Delta u_k + \grad \cdot (\mathbf{J}_\Omega u_k) &=0\\
\delta_\Omega\nabla u_k \cdot \nu &= \frac{1}{\delta_{k'}}T_n(g_k) -\frac{1}{\delta_k}u_kT_m(f_k^+) +  ju_k\\
\dot w_k + w_k\sgrad \cdot \mathbf V_p  - \delta_\Gamma \slap w_k + \sgrad \cdot (\mathbf{J}_{\Gamma} w_k) &= \frac{1}{\delta_{k'}}T_n(g_k) -\frac{1}{\delta_k}u_kT_m(f_k^+)\\
\dot z_k + z_k\sgrad \cdot \mathbf V_p  - \delta_{\Gamma'} \slap z_k +\sgrad \cdot (\mathbf{J}_{\Gamma} z_k) &= \frac{1}{\delta_k}u_kT_m(f_k^+) - \frac{1}{\delta_{k'}}T_n(g_k).
\end{align*}
It follows that for a subsequence $(f_k, g_k) \to (f,g)$ in $(L^2_{L^2(\Gamma)})^2$ by compact embedding  (see \ref{Lem:AubinLions}). By 
using the (Lipschitz) continuity of the truncation  in $L^2$ and the continuity of the trace map, we may send $k \to \infty$ to find
\begin{align*}
\dot u + u\grad \cdot \mathbf V_p - \delta_\Omega \Delta u + \grad \cdot (\mathbf{J}_\Omega u) &=0\\
\delta_\Omega\nabla u \cdot \nu &= \frac{1}{\delta_{k'}}T_n(g) -\frac{1}{\delta_k}uT_m(f^+) + ju\\
\dot w + w\sgrad \cdot \mathbf V_p  - \delta_\Gamma \slap w + \sgrad \cdot (\mathbf{J}_{\Gamma} w) &= \frac{1}{\delta_{k'}}T_n(g) -\frac{1}{\delta_k}uT_m(f^+)\\
\dot z + z\sgrad \cdot \mathbf V_p  - \delta_{\Gamma'} \slap z +\sgrad \cdot (\mathbf{J}_{\Gamma} z) &= \frac{1}{\delta_k}uT_m(f^+) - \frac{1}{\delta_{k'}}T_n(g)
\end{align*}
and this shows that $U(f,g)=u$ and $\Theta(f,g) = (w,z)$ as required. We have shown that this holds for a subsequence but it is also true for the full sequence by a standard argument (for example see \cite{AEStefan}). Therefore, $\Theta$ has a fixed point and we have a solution of the truncated problem \eqref{eq:fullTruncatedProblem}.
\subsection{Non-negativity of the truncated problem}
In this section we prove that the solution $(u,w,z)$ found in the previous section is non-negative; this is expected since the initial data are non-negative. Testing the $z$ and $u$ equations with $z^-$ and $u^-$ respectively with the help of \eqref{eq:surfaceIBPidentity} and \eqref{eq:bulkIBPidentity}, and adding yields 
\begin{align*}
&\frac{d}{dt}\left(\int_{\Gamma(t)}| z^-|^2 + \int_{\Omega(t)}| u^-|^2\right) + \int_{\Gamma(t)} |z^-|^2{\sgrad \cdot \mathbf V_\Gamma}  + \int_{\Omega(t)}| u^-|^2{\grad \cdot \mathbf V_\Omega} + 2\delta_{\Gamma'}\int_{\Gamma(t)}|\sgrad  z^-|^2 +   2\delta_\Omega \int_{\Omega(t)}|\nabla  u^-|^2\\
&\leq 2\int_{\Gamma(t)} \delta_k^{-1}uT_m(w^+) z^{-} + \delta_{k'}^{-1}T_n (z) u^- + \frac{j}{2}| u^-|^2,
\end{align*}
We have used the fact that the first two integrals on the left hand side are differentiable (since the inner product $(u(t), v(t))_{L^2(\Gamma(t))}$ is differentiable in time for functions in $L^2_{H^1} \cap H^1_{H^{-1}}$ and the claim follows by density, see Lemma 2.11 of \cite{AEStefan} for example).
The first two terms on the right hand side can be estimated as follows by splitting the domain of integration:
\begin{align*}
\int_{\Gamma(t)} \delta_k^{-1} uT_m(w^+) z^{-} + \delta_{k'}^{-1}T_n (z) u^- &\leq 
\int_{\{z \leq 0\}} (\delta_k^{-1} uT_m(w^+) z^{-} +  \delta_{k'}^{-1}T_n (z^-) u^-)\\
&\leq \int_{\{z \leq 0\}} (\delta_k^{-1} u^-T_m(w^+) z^{-} + \delta_{k'}^{-1}T_n (z^-) u^-)\tag{writing $u=u^+ + u^-$}\\
&\leq C\left(\norm{ u^-}{L^2(\Gamma(t))}^2 + \norm{ z^-}{L^2(\Gamma(t))}^2\right).
\end{align*}
Moving the terms involving the velocity to the right hand side, via the interpolated trace inequality we estimate
\begin{align*}
\int_{\Gamma(t)} &\delta_k^{-1} uT_m(w^+) z^{-} + \delta_{k'}^{-1}T_n (z) u^- + \frac{j}{2}| u^-|^2 + \frac{\norm{\sgrad \cdot \mathbf V_\Gamma}{\infty}}{2}\int_{\Gamma(t)} |z^-|^2 + \frac{\norm{\grad \cdot \mathbf V_\Omega}{\infty}}{2}\int_{\Omega(t)} |u^-|^2\\
&\leq \epsilon\norm{\nabla  u^-}{L^2(\Omega(t))}^2 + C_\epsilon\norm{ u^-}{L^2(\Omega(t))}^2 + C\norm{ z^-}{L^2(\Gamma(t))}^2.
\end{align*}
This implies that if $\epsilon$ is small enough, 
\begin{align*}
\frac{d}{dt}\left(\int_{\Gamma(t)}| z^-|^2  +  \int_{\Omega(t)}| u^-|^2\right) \leq C\left(\norm{ u^-}{L^2(\Omega(t))}^2 + \norm{ z^-}{L^2(\Gamma(t))}^2\right)
\end{align*}
then Gronwall's inequality gives $ z^-$ and $ u^-$ are zero, so that $z, u \geq 0$. This easily implies that $w \geq 0$ after testing the equation for $ w$ with $ w^{-}$ 
and using Gronwall again.
\subsection{Passing to the limit in $n$}
Now we wish to drop the truncation in $z$. To achieve this, test the $u$, $w$ and $z$ equations with $u$, $w$ and $z$ respectively, use on the right hand sides the estimates
\begin{align*}
2\delta_{k'}^{-1}T_n (z)u + ju^2 &\leq \frac{1}{\delta_{k'}}z^2 + \left(\norm{j}{\infty} + \frac{1}{\delta_{k'}}\right)u^2\\
T_n (z)w &\leq \frac{1}{2}(w^2 + z^2)\\
uT_m(w)z &\leq \frac{m}{2}(u^2 + z^2)
\end{align*}
and combine to find
\begin{align*}
&\frac{1}{2}\frac{d}{dt}\left(\int_{\Omega(t)}u^2+\int_{\Gamma(t)}w^2 +\int_{\Gamma(t)}z^2\right) +   \delta_\Omega \int_{\Omega(t)}|\nabla u|^2 + \delta_{\Gamma} \int_{\Gamma(t)}|\sgrad w|^2 + \delta_{\Gamma'} \int_{\Gamma(t)}|\sgrad z|^2\\
&\leq \left(\frac{1}{2\delta_{k'}} + \frac{\norm{\sgrad \cdot \mathbf V_\Gamma}{\infty}}{2}\right)\int_{\Gamma(t)}w^2 + \left(\frac{1}{2\delta_{k'}} +\frac{1}{2\delta_{k'}} + \frac{m}{2\delta_k} + \frac{\norm{\sgrad \cdot \mathbf V_{\Gamma}}{\infty}}{2}\right)\int_{\Gamma(t)}z^2\\
&\quad + \left(\frac 12\norm{j}{\infty} + \frac{1}{2\delta_{k'}} + \frac{m}{2\delta_k}\right)\int_{\Gamma(t)}u^2 + \left( \frac{\norm{\grad \cdot \mathbf V_\Omega}{\infty}}{2}\right)\int_{\Omega(t)}u^2.
\end{align*}
Now using the interpolated trace inequality with $\epsilon = \delta_\Omega\slash(\norm{j}{\infty} + \frac{1}{\delta_{k'}} + \frac{m}{\delta_k}),$ we obtain
\begin{equation}\label{eq:boundOnn}
\norm{u}{L^\infty_{L^2(\Omega)}} + \norm{w}{L^\infty_{L^2(\Gamma)}} + \norm{z}{L^\infty_{L^2(\Gamma)}} + \norm{\grad u}{L^2_{L^2(\Omega)}} + \norm{\grad w}{L^2_{L^2(\Gamma)}}  + \norm{\grad z}{L^2_{L^2(\Gamma)}} \leq C(\delta_\Omega, \delta_k, \delta_{k'}, \delta_\Gamma, \delta_{\Gamma'}, m)
\end{equation}
independent of $n$ (recall that the triple $(u,w,z)$ depends on $n$, the truncation height of $z$). We also have, using the trace inequality,
\begin{align*}
\langle \dot u, \eta \rangle 
&\leq (\norm{\grad \cdot \mathbf V_\Omega}{\infty} + \norm{\grad \cdot \mathbf{J}_\Omega}{\infty})\norm{u}{L^2(\Omega(t))}\norm{\eta}{L^2(\Omega(t))}  + (\delta_\Omega+C_J) \norm{u}{H^1(\Omega(t))} \norm{\eta}{H^1(\Omega(t))}\\
&\quad +
C(\delta_{k'}^{-1}\norm{z}{L^2(\Gamma(t))} + m\delta_k^{-1}\norm{u}{L^2(\Gamma(t))} + \norm{j}{\infty}\norm{u}{L^2(\Gamma(t))})\norm{\eta}{H^1(\Omega(t))}
\end{align*}
where $C_J$ is a constant depending on $\mathbf{J}_\Omega$. Integrating and taking the supremum over $\eta \in L^2_{H^1(\Omega)}$, we find
\[\norm{\dot u}{L^2_{H^{-1}(\Omega)}} \leq C(\delta_\Omega, \delta_{k}, \delta_{k'}, m).\]
Testing the $w$ equation with $\dot w$ and using $T_n(z) \leq z$, and likewise with the $z$ equation, we obtain, after some manipulation,
\[\norm{\dot w}{L^2_{L^2(\Gamma)}} + \norm{\dot z}{L^2_{L^2(\Gamma)}} \leq C(\delta_\Omega, \delta_{k}, \delta_{k'}, m).\]
Using these uniform estimates and the Aubin--Lions-type compactness theorem in appendix \ref{Lem:AubinLions}, we may pass to the limit in $n$ and we shall find existence of
\begin{equation*}\label{eq:truncatedProblem}
\begin{aligned}
\dot u + u\grad \cdot \mathbf V_p - \delta_\Omega \Delta u + \grad \cdot (\mathbf{J}_\Omega u)&=0\\
\delta_\Omega\nabla u \cdot \nu - ju &= \frac{1}{\delta_{k'}}z-\frac{1}{\delta_k}uT_m(w)\\
\dot w + w\sgrad \cdot \mathbf V_p - \delta_\Gamma \slap w + \sgrad \cdot (\mathbf{J}_{\Gamma} w) &=  \frac{1}{\delta_{k'}}z-\frac{1}{\delta_k}uT_m(w) \\
\dot z + z\sgrad \cdot \mathbf V_p - \delta_{\Gamma'}\slap z + \sgrad \cdot (\mathbf{J}_{\Gamma} z) &= \frac{1}{\delta_k}uT_m(w) - \frac{1}{\delta_{k'}}z.
\end{aligned}
\end{equation*}
It remains for us to remove the truncation on $w$ by showing that $w$ is bounded.
\subsection{Proof of existence}\label{sec:subBoundForw}
Before we prove Theorem \ref{thm:existence}, let us remark that the $L^\infty$ bounds for $w$ (and $z$) are easy to obtain if the diffusion constants are the same for the two surface equations: one simply tests the equation satisfied by $v$ by $(v-M)^+$ for $M:=\norm{w_0}{L^\infty(\Gamma_0)} + \norm{z_0}{L^\infty(\Gamma_0)}$ and one finds a bound on $v$ (and by non-negativity, on $w$ and $z$) which depends only on $T$ and the initial data. In this simple case, we would not have needed to truncate $z$ either earlier. Clearly, in the general $\delta_\Gamma \neq \delta_{\Gamma'}$ case a weak maximum principle technique cannot work in trying to procure an $L^\infty$ bound on $w$ since we do not yet know if $z$ is bounded or not. However, we know by iteration arguments that the $L^\infty$ bound on parabolic equations should depend only on $L^r_{L^q}$ bounds on the (positive part of the) right hand side. Indeed, classical results on stationary domains \cite[\S III.7]{Lady}, \cite{aronson1963, Aronson1967, LU} lead us to expect an $L^\infty$ bound on solutions to linear parabolic equations with the right hand side $f \in L^{r}_{L^q(\Gamma)}$ if the condition
\begin{equation}\label{eq:maximalRegularity}
\frac 1r + \frac{d}{2q} < 1
\end{equation}
is satisfied. In our case, the corresponding right hand side is the variable $z$. But such bounds on $z$ depend on $w$ (due to the $uT_m(w)$ term in the $z$ equation) so this leads to a circular argument. However, if we rewrite the $w$ equation so as to eliminate $z$ by using the substitution $v = w+z$ (see \eqref{eq:crossDiff}) we can eventually obtain what we want. Let us proceed by recalling that $v$ solves
\begin{align*}
\dot v + v\sgrad \cdot \mathbf V_p + \sgrad \cdot (\mathbf{J}_{\Gamma}v) &= \delta_{\Gamma'}\slap v + (\delta_\Gamma - \delta_{\Gamma'})\slap w.
\end{align*}
\begin{lem}\label{lem:energyEstimates}The following energy estimate holds independent of $\delta_k$:
\[\norm{v}{L^\infty_{L^2(\Gamma)}} + \norm{w}{L^\infty_{L^2(\Gamma)}} + \frac{A}{\delta_k}\norm{\sqrt{u}w}{L^2_{L^2(\Gamma)}} + \delta_{\Gamma'}\norm{\sgrad v}{L^2_{L^2(\Gamma)}} + C_0\norm{\sgrad w}{L^2_{L^2(\Gamma)}} \leq C(T, \delta_{k'}, A(\delta_{\Gamma}, \delta_{\Gamma'}))\]
where $A \geq \max\left(1, \frac{C_0}{2\delta_\Gamma} + \frac{(\delta_\Gamma - \delta_{\Gamma'})^2}{2\delta_\Gamma\delta_{\Gamma'}}\right)$ and $C_0 \geq 0$ is chosen arbitrarily. 
\end{lem}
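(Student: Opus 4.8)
The plan is to exploit the structural observation made around \eqref{eq:crossDiff}: the function $v:=w+z$ (which lies in $W_\Gamma$ since $w,z$ do) satisfies an equation in which the two reaction terms have cancelled, the only price being the cross-diffusion term $(\delta_\Gamma-\delta_{\Gamma'})\slap w$ on the right-hand side. I would therefore derive the estimate by testing the $v$-equation against $v$ and the $w$-equation against $Aw$, adding the two identities and applying Gronwall's lemma; the weight $A$, together with its prescribed lower bound, is exactly what is needed to absorb the cross term.

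Testing the $v$-equation with $v$, using the transport identity $\frac{d}{dt}\int_{\Gamma(t)}v^2=2\langle\dot v,v\rangle+\int_{\Gamma(t)}v^2\sgrad\cdot\mathbf V_p$ and the surface integration-by-parts formulae of Appendix~\ref{sec:prelim} (recalling that $\mathbf J_\Gamma$ is purely tangential), one obtains
\[\frac12\frac{d}{dt}\int_{\Gamma(t)}v^2+\delta_{\Gamma'}\int_{\Gamma(t)}|\sgrad v|^2\le C\int_{\Gamma(t)}v^2+|\delta_\Gamma-\delta_{\Gamma'}|\int_{\Gamma(t)}|\sgrad w|\,|\sgrad v|,\]
with $C$ depending only on $\norm{\sgrad\cdot\mathbf V_\Gamma}{\infty}$ and $\norm{\sgrad\cdot\mathbf J_\Gamma}{\infty}$. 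Testing the $w$-equation with $Aw$ and using that $u,T_m(w),w\ge0$, the reaction term $-\frac{A}{\delta_k}\int_\Gamma uT_m(w)w\le0$ moves to the left as a nonnegative quantity, while the other reaction contribution is estimated by $\frac{A}{\delta_{k'}}\int_\Gamma zw\le\frac{A}{2\delta_{k'}}\int_\Gamma(v^2+w^2)$ since $0\le z\le v$; this yields
\[\frac{A}{2}\frac{d}{dt}\int_{\Gamma(t)}w^2+A\delta_\Gamma\int_{\Gamma(t)}|\sgrad w|^2+\frac{A}{\delta_k}\int_{\Gamma(t)}uT_m(w)w\le C(A,\delta_{k'})\int_{\Gamma(t)}(v^2+w^2),\]
modulo zeroth-order velocity terms that are absorbed into the constant.

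Adding the two inequalities, the cross term is disposed of by Young's inequality, $|\delta_\Gamma-\delta_{\Gamma'}|\,|\sgrad w|\,|\sgrad v|\le\frac{\delta_{\Gamma'}}{2}|\sgrad v|^2+\frac{(\delta_\Gamma-\delta_{\Gamma'})^2}{2\delta_{\Gamma'}}|\sgrad w|^2$: the first half is absorbed by the $\delta_{\Gamma'}\int|\sgrad v|^2$ term, and — this is the point at which the hypothesis $A\ge\frac{C_0}{2\delta_\Gamma}+\frac{(\delta_\Gamma-\delta_{\Gamma'})^2}{2\delta_\Gamma\delta_{\Gamma'}}$ is used — the coefficient $\frac{(\delta_\Gamma-\delta_{\Gamma'})^2}{2\delta_{\Gamma'}}$ of $|\sgrad w|^2$ is dominated by $A\delta_\Gamma$, leaving a surplus at least $\frac{C_0}{2}\int|\sgrad w|^2$ on the left. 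After moving the remaining zeroth-order terms to the right one arrives at a differential inequality of the shape
\[\frac{d}{dt}\Big(\int_{\Gamma(t)}v^2+A\int_{\Gamma(t)}w^2\Big)+\delta_{\Gamma'}\int_{\Gamma(t)}|\sgrad v|^2+C_0\int_{\Gamma(t)}|\sgrad w|^2+\frac{2A}{\delta_k}\int_{\Gamma(t)}uT_m(w)w\le C(A,\delta_{k'})\Big(\int_{\Gamma(t)}v^2+A\int_{\Gamma(t)}w^2\Big).\]
Since $A\ge1$, Gronwall's lemma bounds $\norm{v}{L^\infty_{L^2(\Gamma)}}$ and $\norm{w}{L^\infty_{L^2(\Gamma)}}$ in terms of $T,\delta_{k'},A$ and $\norm{v_0}{L^2(\Gamma_0)},\norm{w_0}{L^2(\Gamma_0)}$ (and hence $\norm{w_0}{L^2(\Gamma_0)},\norm{z_0}{L^2(\Gamma_0)}$), and integrating over $[0,T]$ and taking square roots then controls $\delta_{\Gamma'}\norm{\sgrad v}{L^2_{L^2(\Gamma)}}$, $C_0\norm{\sgrad w}{L^2_{L^2(\Gamma)}}$ and $\frac{A}{\delta_k}\norm{\sqrt u\,w}{L^2_{L^2(\Gamma)}}$. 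Every constant depends only on $T$, $\delta_{k'}$, $\delta_\Gamma$, $\delta_{\Gamma'}$ (through $A$) and the velocity fields, and in particular \emph{not} on $\delta_k$ nor on the truncation height $m$ — which is exactly what is needed in \S\ref{sec:subBoundForw} to set $m:=\norm{w}{\infty}$.

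The main obstacle is the cross-diffusion term $(\delta_\Gamma-\delta_{\Gamma'})\slap w$. It is the reason why testing the $w$- and $z$-equations separately does not work — there the reaction terms $\pm\frac{1}{\delta_k}uT_m(w)$ would reintroduce a dependence on $m$ (and on $\delta_k$) that cannot be removed — and handling it forces both the passage to $v=w+z$ and the precise lower bound on the weight $A$, so that the $|\sgrad w|^2$ residue produced by Young's inequality is reabsorbed by the genuine $\delta_\Gamma$-diffusion in the $w$-equation.
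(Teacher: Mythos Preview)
Your proposal is correct and follows essentially the same approach as the paper: test the $v$-equation with $v$ and the $w$-equation with $Aw$, split the cross term $|\delta_\Gamma-\delta_{\Gamma'}|\int|\sgrad w||\sgrad v|$ by Young's inequality with exactly the weights you chose, use the lower bound on $A$ to absorb the $|\sgrad w|^2$ residue into $A\delta_\Gamma\int|\sgrad w|^2$, and finish with Gronwall. The only cosmetic difference is that the paper writes $z=v-w$ and bounds $\int vw$ directly rather than invoking $0\le z\le v$, and it tracks the velocity terms explicitly via identity~\eqref{eq:surfaceIBPidentity} instead of absorbing them into a generic constant.
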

\begin{proof}
Test the equation for $v$ with $v$ itself to find
\begin{align*}
\frac{d}{dt}\int_{\Gamma(t)} v^2 + \int_{\Gamma(t)} v^2\sgrad \cdot \mathbf V_\Gamma + 2\delta_{\Gamma'}\int_{\Gamma(t)} |\sgrad v|^2 \leq 2|\delta_\Gamma - \delta_{\Gamma'}| \int_{\Gamma(t)} \epsilon |\sgrad v|^2 + C_\epsilon|\sgrad w|^2.
\end{align*}
Test also the $w$ equation with $w$ and multiply by a number $A>0$ to be determined to obtain
\begin{align*}
A\frac{d}{dt}\int_{\Gamma(t)} w^2 + A\int_{\Gamma(t)} w^2\sgrad \cdot \mathbf V_\Gamma + 2A\delta_{\Gamma}\int_{\Gamma(t)} |\sgrad w|^2 
= \frac{2A}{\delta_{k'}}\int_{\Gamma(t)} (v - w)w -\frac{2A}{\delta_k}\int_{\Gamma(t)} uw^2.
\end{align*}
Now pick $\epsilon = \delta_{\Gamma'}\slash 2|\delta_{\Gamma}-\delta_{\Gamma'}|$ and add the two inequalities together:
\begin{align*}
\frac{d}{dt}\int_{\Gamma(t)} (v^2 + Aw^2) + \int_{\Gamma(t)} (v^2+ Aw^2)\sgrad \cdot \mathbf V_\Gamma + \frac{2A}{\delta_k}\int_{\Gamma(t)} uw^2 + \delta_{\Gamma'}\int_{\Gamma(t)} |\sgrad v|^2  + 2A\delta_{\Gamma}\int_{\Gamma(t)} |\sgrad w|^2\\
 \leq 2|\delta_\Gamma - \delta_{\Gamma'}| \int_{\Gamma(t)} C_\epsilon|\sgrad w|^2 + \frac{2A}{\delta_{k'}}\int_{\Gamma(t)} vw.
\end{align*}
Recall that $C_\epsilon = (4\epsilon)^{-1}$. Now, for any $C_0 \geq 0$, pick $A$ such that $A \geq \max\left(1, \frac{C_0}{2\delta_\Gamma} + \frac{(\delta_\Gamma - \delta_{\Gamma'})^2}{2\delta_\Gamma\delta_{\Gamma'}}\right)$, then 
\begin{align*}
&\frac{d}{dt}\int_{\Gamma(t)} (v^2 + Aw^2) + \frac{2A}{\delta_k}\int_{\Gamma(t)} uw^2  + \delta_{\Gamma'}\int_{\Gamma(t)} |\sgrad v|^2  + C_0\int_{\Gamma(t)} |\sgrad w|^2\\
&\leq \frac{A }{\delta_{k'}}\int_{\Gamma(t)} v^2 + w^2 + \norm{\sgrad \cdot \mathbf V_\Gamma}{\infty}\int_{\Gamma(t)} (v^2+ Aw^2)\\
&\leq \left(\frac{A }{\delta_{k'}} + \norm{\sgrad \cdot \mathbf V_\Gamma}{\infty}\right)\int_{\Gamma(t)} (v^2+ Aw^2)\tag{using $1 \leq A$},
\end{align*}
which, with $\alpha := \left(A \slash \delta_{k'} + \norm{\sgrad \cdot \mathbf V_\Gamma}{\infty}\right)$
yields
\[\norm{v(t)}{L^2(\Gamma(t))}^2 + A\norm{w(t)}{L^2(\Gamma(t))}^2 \leq e^{\alpha T}\int_{\Gamma_0} (v_0^2 + Aw_0^2)\]
and
\[\frac{2A}{\delta_k}\norm{\sqrt{u}w}{L^2_{L^2(\Gamma)}}^2 + \delta_{\Gamma'}\norm{\sgrad v}{L^2_{L^2(\Gamma)}}^2 + C_0\norm{\sgrad w}{L^2_{L^2(\Gamma)}}^2 \leq \left(T\alpha e^{\alpha T} + 1\right)\int_{\Gamma_0} (v_0^2 + Aw_0^2).\]
\end{proof}
We are ready to conclude the existence.
\begin{proof}[Proof of Theorem \ref{thm:existence}]
The equation for $w$, with the right hand side written in terms of $v$ reads
\[\dot w + w\sgrad \cdot \mathbf V_p - \delta_\Gamma \slap w + \sgrad \cdot (\mathbf{J}_{\Gamma}w) =  \left(\frac{1}{\delta_{k'}}v- \frac{1}{\delta_{k'}}w-\frac{1}{\delta_k}uT_m(w) \right).
\]
Lemma \ref{lem:deGiorgi} below guarantees an $L^\infty$ bound on $w$  for dimensions $d\leq 3$ whose dependence on the right hand side data is only on the $L^\infty_{L^2(\Gamma)}$ norm of the positive part of the right hand side. Since we proved in Lemma \ref{lem:energyEstimates} that the $L^\infty_{L^2(\Gamma)}$ norm of $\delta_{k'}^{-1}v$ is bounded independently of $z$ and $u$, it follows that if we pick the truncation level of $w$ to be $m:=\norm{w}{L^\infty_{L^\infty(\Gamma)}}$ we obtain existence for \eqref{eq:model}.
\end{proof}
Actually, we know from Lemma \ref{lem:energyEstimates} that $v$ is bounded in the space $Q(\Gamma)$, not just in $L^\infty_{L^2(\Gamma)}$, so one might expect to profit using this extra information somehow in the existence or boundedness result. We address this question in Remark \ref{rem:gain}.
\section{$L^\infty$ bounds}\label{sec:boundednessOfUandZ}
In this section, we will prove essential boundedness for $z$ and $u$, as well as proving a lemma that we used to prove boundedness of $w$ (and existence for the system) in the previous section. We start with $L^\infty$ bounds for solutions of an abstract linear parabolic PDE on an evolving surface with $L^p$ right hand side data that we can apply to the $w$ and $z$ equations. Clearly these results can also be used for many other problems on evolving surfaces.
\medskip

\noindent\textbf{De Giorgi method.} Since we will be using variations of the De Giorgi $L^2$--$L^{\infty}$ scheme to prove the $L^{\infty}$ bounds and the proofs are rather technical, it is worth emphasising the key steps. This method was introduced by De Giorgi \cite{DeGiorgiEnnio} in his paper regarding the regularity of solutions to nonlinear elliptic problems. His technique allowed him to obtain boundedness and H\"older regularity of solutions with only $L^2$ \emph{a priori} estimates. This method has since then been applied to several other problems, including parabolic equations. The starting point is to define a sequence of positive numbers, say $U_k$. This is usually related to the $L^2$ norm of $u_k=(u-c_k)^+$, if $u$ is the function that we wish to bound. Then one has to establish a nonlinear recurrence estimate of the form
\begin{equation}\label{L-U}
U_k\leq C \beta^k U_{k-1}^{1+s}, \, \text{ where } \, C, \beta, s>0.
\end{equation}
Usual tools to achieve such estimate include Sobolev inequalities, energy estimates and the Chebyshev inequality. Then, if $U_0$ is small enough, namely if $U_0\leq \min(1, (2C)^{-1/s}\beta^{-1/s^2}),$ then $U_k$ converges to zero as $k$ tends to infinity. If also $c_k$ converges to some number $M$ we can say that $u\leq M$ in some appropriate set, depending on the definition of $U_k$. 

There is another formulation to this method, which relies on a different recurrence estimate. We now define a function $U\geq 0$ to be nonincreasing in $[\overline{x},\infty)$ and such that, for $y\geq x\geq \overline{x}$,
\begin{equation}\label{Stamp}
(y-x)^pU(y)\leq CU(x)^{1+s} \, \text{ where } \, C, p, s>0.
\end{equation}
Then $U(y)=0$ for $y\geq  \overline{x} + B$ where $B^p=CU(0)^s2^{\frac{p(s+1)}{s}}$. This result is due to Stampacchia \cite{Stampacchia1965}. To suit our purpose $U(y)$ will be related to the measure of the set $\{u>y\}$, where again $u$ is the function we wish to bound.

\subsection{Boundedness for parabolic problems on evolving surfaces}
We consider the following initial value problem set on $\Gamma(t)$
\begin{equation}\label{eq:linearIVP}
\begin{aligned}
\dot \labelForVar + \labelForVar\sgrad \cdot \mathbf V_p - D\Delta_\Gamma \labelForVar + \sgrad \cdot (\mathbf J_\Gamma \labelForVar) &= g\\
\labelForVar(0) &= \labelForVar_0
\end{aligned}
\end{equation}
for a given constant $D > 0$ and data $g \in L^2_{L^2(\Gamma)}$ and $a_0 \in L^2(\Gamma_0)$.

\begin{lem}[De Giorgi I]\label{lem:deGiorgi}For dimensions $d \leq 3$, the weak solution of the equation \eqref{eq:linearIVP}
given $g \in L^2_{L^2(\Gamma)}$ with $g^+ \in L^\infty_{L^2(\Gamma)}$  and $\labelForVar_0 \in L^\infty(\Gamma_0)$ satisfies
\[ 
\norm{\labelForVar}{L^\infty_{L^\infty(\Gamma)}} \leq e^{(\norm{\sgrad \cdot \mathbf V_\Gamma}{\infty} + D)T}\left(\norm{\labelForVar_0}{L^\infty(\Gamma_0)} + CD^{-1}\norm{g^+}{L^\infty_{L^{2}}}\right).
\]
\end{lem}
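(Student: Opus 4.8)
The plan is to run the De Giorgi $L^2$--$L^\infty$ scheme directly on the weak formulation of \eqref{eq:linearIVP}. First I would reduce to the case of zero initial data and a homogeneous transport term: set $\tilde a = e^{-\lambda t}a$ for a suitable $\lambda$ comparable to $\norm{\sgrad\cdot\mathbf V_\Gamma}{\infty}+D$ (and absorb the exponential prefactor this produces into the final estimate), and note that the term $\sgrad\cdot(\mathbf J_\Gamma a)$ can be integrated by parts against test functions so that, since $\mathbf J_\Gamma$ has no normal component and is smooth, it contributes only terms controllable by $\epsilon\norm{\sgrad a}{L^2}^2 + C_\epsilon\norm{a}{L^2}^2$. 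The key point is that the claimed bound depends on the right-hand side only through $\norm{g^+}{L^\infty_{L^2(\Gamma)}}$, so throughout one keeps $g$ on the right and estimates $\int_{\Gamma(t)} g\,(a-c_k)^+ \le \norm{g^+}{L^2(\Gamma(t))}\norm{(a-c_k)^+}{L^2(\Gamma(t))}$, using that $(a-c_k)^+$ is supported where $a > c_k > 0$ so only $g^+$ enters.

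The core computation is the energy estimate for the truncated functions. Fix a sequence of levels $c_k = M(1 - 2^{-k})$ (with $M$ to be chosen at the end, of the order of the right-hand side of the claim) and test the weak equation with $(a-c_k)^+\in L^2_{H^1(\Gamma)}$, using the transport identity $\tfrac{d}{dt}\int_{\Gamma(t)} f = \int_{\Gamma(t)}\dot f + f\sgrad\cdot\mathbf V_p$ together with the chain rule for $\tfrac12\tfrac{d}{dt}\int_{\Gamma(t)}((a-c_k)^+)^2$. This yields, after absorbing the velocity and jump terms, a differential inequality of the form
\begin{equation*}
\frac{d}{dt}\int_{\Gamma(t)}((a-c_k)^+)^2 + D\int_{\Gamma(t)}|\sgrad (a-c_k)^+|^2 \le C\int_{\Gamma(t)}((a-c_k)^+)^2 + C\norm{g^+}{L^\infty_{L^2}}^2|A_k(t)|,
\end{equation*}
where $A_k(t) = \{x\in\Gamma(t) : a(x,t) > c_k\}$. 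Integrating in time and using $(a-c_k)^+(0)=0$, one controls $\sup_t \norm{(a-c_k)^+(t)}{L^2(\Gamma(t))}^2 + \norm{\sgrad (a-c_k)^+}{L^2_{L^2(\Gamma)}}^2$ by the right-hand side; call this quantity $U_k^2$ roughly (more precisely $U_k$ should be the $L^2_{L^2(\Gamma)}$ norm of $(a-c_k)^+$, or an anisotropic Sobolev-type norm). The Sobolev embedding $L^2_{H^1(\Gamma)}\cap L^\infty_{L^2(\Gamma)}\hookrightarrow L^q_{L^p(\Gamma)}$ valid for $d\le 3$ (this is exactly where the dimensional restriction enters, via the parabolic Sobolev inequality on the $d$-dimensional surface, transported to the reference configuration $\Gamma_0$) then upgrades this to a higher integrability bound on $(a-c_k)^+$.

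The final step is the nonlinear iteration. Using that $(c_{k}-c_{k-1})\chi_{A_k} \le (a-c_{k-1})^+$ one bounds $|A_k|$ and the $L^2$ norm of $(a-c_k)^+$ by powers of $U_{k-1}$ via Chebyshev and Hölder, producing a recurrence $U_k \le C\beta^k U_{k-1}^{1+s}$ with $\beta>1$, $s>0$ of the type \eqref{L-U}. By the fast-geometric-convergence lemma, if the ``initial'' quantity $U_0$ — which is controlled by $\norm{g^+}{L^\infty_{L^2}}$ and, after the rescaling, by $M$ — is small enough, then $U_k\to 0$, giving $a\le M$ a.e.; choosing $M = C D^{-1}\norm{g^+}{L^\infty_{L^2}} + \norm{a_0}{L^\infty(\Gamma_0)}$ (the $D^{-1}$ coming from dividing through by the diffusion constant to make the smallness condition scale-invariant) makes $U_0$ small enough. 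Undoing the exponential rescaling $\tilde a = e^{-\lambda t}a$ restores the factor $e^{(\norm{\sgrad\cdot\mathbf V_\Gamma}{\infty}+D)T}$; a symmetric argument applied to $-a$ (where now only $g^-$, hence nothing beyond $g^+$ of $-a$, matters) gives the two-sided $L^\infty$ bound. I expect the main obstacle to be bookkeeping the moving-domain terms cleanly — in particular verifying the transport formula and chain rule for $((a-c_k)^+)^2$ on the evolving surface and checking that the jump term $\sgrad\cdot(\mathbf J_\Gamma a)$ and the $\sgrad\cdot\mathbf V_p$ term never spoil the coercivity or introduce a dependence on anything other than $\norm{g^+}{L^\infty_{L^2}}$, $\norm{\sgrad\cdot\mathbf V_\Gamma}{\infty}$, $D$ and $T$ — together with getting the exponents in the recurrence \eqref{L-U} right so that the smallness threshold is genuinely met by the stated choice of $M$.
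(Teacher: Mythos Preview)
Your displayed differential inequality contains an error that sits at the heart of the iteration. From $\int_{\Gamma(t)} g\,(a-c_k)^+\le \norm{g^+(t)}{L^2(\Gamma(t))}\norm{(a-c_k)^+}{L^2(\Gamma(t))}$ with $g^+$ only in $L^2$ spatially you cannot produce the factor $\norm{g^+}{L^\infty_{L^2}}^2|A_k(t)|$: Young's inequality yields $\norm{g^+}{L^2(\Gamma(t))}^2$ with no $|A_k(t)|$, and H\"older on $g^+|_{A_k}$ would need a spatial exponent strictly above $2$. In a space-time De Giorgi scheme the level-set dependence has to come from the \emph{solution} side instead: keep $\int_0^T\!\int_{\Gamma}f^+a_k\le \norm{f^+}{L^\infty_{L^2}}\int_0^T\norm{a_k}{L^{q_*}}|A_k|^{1/2-1/q_*}$ (H\"older on the support of $a_k$ with $q_*>2$) and control $\norm{a_k}{L^{r_*}_{L^{q_*}}}$ by the parabolic embedding $Q(\Gamma)\hookrightarrow L^{r_*}_{L^{q_*}(\Gamma)}$ of Lemma~\ref{lem:gn}. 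Done this way the recurrence \eqref{L-U} does close for $d\le 3$ (the exponent condition is exactly $\tfrac{1}{\infty}+\tfrac{d}{4}<1$), and the mechanism is essentially that of Lemma~\ref{lem:deGiorgiSecond}.

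Even after this fix, your route differs from the paper's. The paper uses the Stampacchia iteration \eqref{Stamp} via a \emph{pointwise-in-time} reduction: after the same exponential transformation it locates the time $\sigma$ at which $I_k(t):=\int_{\Gamma(t)}|a_k(t)|^2$ is maximal, and uses Lebesgue differentiation at a nearby Lebesgue point $\sigma_n$ (together with $I_k(\sigma_n)-I_k(\sigma_n-\epsilon)\ge 0$) to turn the integrated energy estimate into the purely spatial inequality $D\norm{a_k}{H^1(\Gamma(\sigma_n))}^2\le \int_{\Gamma(\sigma_n)}f^+a_k$. Only the \emph{spatial} Sobolev inequality \eqref{eq:sobolevInequality} is then needed; the $|A_k|$ factor enters via H\"older on $f^+$ over $A_k(\sigma_n)$ with $p'r'\le 2$, and one iterates on $\mu_k:=\esssup_t|A_k(t)|$. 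This fixed-time trick exploits the $L^\infty$-in-time hypothesis on $g^+$ directly and avoids the parabolic embedding; your space-time scheme is closer in spirit to the paper's proof of Lemma~\ref{lem:deGiorgiSecond} than to its proof of the present lemma.
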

\begin{proof}
Let us consider the transformed problem
\begin{align*}
\dot{\tilde \labelForVar} + \tilde \labelForVar(\sgrad \cdot \mathbf V_p + \lambda)  - D\Delta_\Gamma \tilde \labelForVar + \sgrad \cdot (\mathbf J_\Gamma\tilde \labelForVar)  &= e^{-\lambda t}g =: f\\
\tilde \labelForVar(0) &= \labelForVar_0,
\end{align*}
where $\tilde{a}=e^{-\lambda t}a$. For convenience, we do not write the tildes from now on. Testing with $a_k:=(\labelForVar-k)^+$ for $k \geq k_0 := \norm{a_0}{L^\infty(\Gamma_0)}$ and integrating by parts using \eqref{eq:surfacePosId}, we find
\begin{align*}
\frac{1}{2}\frac{d}{dt}\int_{\Gamma(t)} |a_k|^2 + \int_{\Gamma(t)} \labelForVar a_k(\sgrad \cdot \mathbf V_\Gamma + \lambda) -\frac 12 |a_k|^2 \sgrad \cdot \mathbf V_\Gamma  +  D|\sgrad a_k|^2 
&\leq \int_{\Gamma(t)} f^+a_k.
\end{align*}
Let $\lambda = D+ \norm{\sgrad \cdot \mathbf V_\Gamma}{\infty}$. Then $\sgrad \cdot \mathbf V_\Gamma +\lambda \geq D$. Using $aa_k = a_k^2 + ka_k \geq a_k^2$ and integrating in time we obtain
\begin{align}
\frac{1}{2}\int_{\Gamma(t)} |a_k(t)|^2 - \frac{1}{2}\int_{\Gamma(s)} |a_k(s)|^2  + D\int_s^t\int_{\Gamma(\tau)} \left(|a_k|^2+ |\sgrad a_k|^2\right)
\leq \int_s^t \int_{\Gamma(\tau)} f^+a_k\label{eq:2}.
\end{align}
Since the function $I_k(t) := \int_{\Gamma(t)}|a_k(t)|^2$ is an inner product of functions in $L^2_{H^1(\Gamma)} \cap H^1_{L^2(\Gamma)}$, it is continuous,  and hence it has a maximum, say at $t=\sigma > 0.$ Now, let us pick a sequence $\delta_n \to 0$ such that, for $\sigma_n := \sigma - \delta_n$, the following hold:
\begin{enumerate}
\item $\norm{f^+(\sigma_n)}{L^2(\Gamma(\sigma_n))} \leq \norm{f^+}{L^\infty_{L^2(\Gamma)}}$ 
\item $\sigma_n$ is a Lebesgue point for the function 
\[G(\tau):=\int_{\Gamma(\tau)} f^+(\tau)a_k(\tau)-D \left(|\sgrad a_k(\tau)|^2 + |a_k(\tau)|^2\right).\]
(If $\sigma$ is a Lebesgue point for the above function then we can just set $\delta_n \equiv 0$; otherwise we know the set of Lebesgue points is dense and therefore we can choose $\sigma_n$)
\item $I_k(\sigma_n) - I_k(\sigma_n-\epsilon) \geq 0$
(this can be done if $\delta_n$ and $\epsilon$ are small enough, since $I_k$ is continuous, starts at zero and has a maximum on $(0,T]$).
\end{enumerate} Choosing in \eqref{eq:2} $t=\sigma_n$ and $s=\sigma_n - \epsilon$, we obtain
\begin{align*}
D\int_{\sigma_n - \epsilon}^{\sigma_n}\int_{\Gamma(\tau)} \left(|a_k|^2 +  |\sgrad a_k|^2\right)
\leq \int_{\sigma_n - \epsilon}^{\sigma_n} \int_{\Gamma(\tau)} f^+a_k
\end{align*}
and now divide by $\epsilon$ and send $\epsilon \to 0$ with the Lebesgue differentiation theorem to find
\begin{align}
D\norm{ a_k}{H^1(\Gamma(\sigma_n))}^2
\leq \int_{\Gamma(\sigma_n)} f^+a_k.\label{eq:usedEq}
\end{align}
Define $A_k(t) := \{ x \in \Gamma(t) \mid \labelForVar(t,x) > k\}$ and $\mu_k := \esssup_{0 \leq t \leq T} |A_k(t)|$. Clearly $\mu_k\geq 0$ is non-increasing for $k\geq \norm{a_0}{L^{\infty}(\Gamma_0)}$.  Using the Sobolev inequality \eqref{eq:sobolevInequality}, the above yields
\begin{align*}
DC_I^2\norm{a_k}{L^p(\Gamma(\sigma_n))}^p 
\leq \left(\int_{\Gamma(\sigma_n)\cap A_k(\sigma_n)}|f^+|^{p'}\right)^{\frac{1}{p'}}\norm{a_k}{L^p(\Gamma(\sigma_n))}
\end{align*}
where $p'$ is the conjugate to $p$ and $p$ satisfies the constraint in \eqref{eq:sobolevInequality}.
 Therefore,
\begin{align*}
DC_I^2\norm{a_k}{L^p(\Gamma(\sigma_n))}
&\leq |A_k(\sigma_n)|^{\frac{1}{rp'}}\norm{f^+(\sigma_n)}{L^{r'p'}(\Gamma(\sigma_n))}\\
& \leq \mu_k^{\frac{1}{rp'}}\norm{f^+}{L^\infty_{L^{r'p'}}}
\end{align*}
where $r \geq 1$ is arbitrary for now. Now we wish to establish a similar inequality for $I_k(\sigma_n)$. Using Holder's inequality with $(p\slash 2, p\slash (p-2))$ and the previous estimate,
\[I_k(\sigma_n) = \int_{\Gamma(\sigma_n)}|a_k|^2\\
\leq \left(\int_{A_k(\sigma_n)}1^{p\slash (p-2)}\right)^{\frac{p-2}{p}}\left(\int_{\Gamma(\sigma_n)}|a_k|^{p}\right)^{\frac{2}{p}}\\
\leq \frac{1}{D^2C_I^4}\mu_k^{\frac{2}{rp'} + \frac{p-2}{p}}\norm{f^+}{L^\infty_{L^{r'p'}}}^2.\]
Sending $\delta_n \to 0$ and using continuity of $I_k$ we obtain
\begin{align*}
I_k(\sigma)
&\leq \frac{1}{D^2C_I^4}\mu_k^{\frac{2}{rp'} + \frac{p-2}{p}}\norm{f^+}{L^\infty_{L^{r'p'}}}^2.
\end{align*}
Observe that for any $h \geq k$,
\[I_k(t) = \int_{A_k(t)}|a_k|^2 \geq \int_{A_h(t)}|a_k|^2 \geq (h-k)|^2|A_h(t)|\]
which implies that 
\begin{align*}
\mu_h \leq \frac{\norm{f^+}{L^\infty_{L^{p'r'}}}^2}{D^2C_I^4(h-k)^2}\mu_k^{\frac{2}{rp'} + \frac{p-2}{p}}
\end{align*}
which can be written as $(h-k)^2\mu_h\leq C\mu_k^{\gamma}$ where $C= \norm{f^+}{L^\infty_{L^{p'r'}}}^2\slash (D^2C_I^4)$ and $\gamma= {2}\slash (rp') + (p-2)\slash {p}$. This is the setting of the Stampacchia lemma \cite{Stampacchia1965}, see \eqref{Stamp}. We need $\gamma>1$ and this is satisfied if $r < p-1$. Moreover, given the data to the problem we must impose $p'r'\leq 2$. This poses no problem in dimensions $d\leq 2$ since in that case $p$ can be arbitrary as given by the Sobolev embedding. However, for higher dimensions, $p=2d/(d-2)$, hence, in order to satisfy the conditions above we must restrict ourselves to $d<4$. 
Therefore, $\mu_h =0$ for all $h \geq \norm{\labelForVar_0}{L^\infty(\Gamma_0)} + B$ where
\[B =\frac{\norm{f^+}{L^\infty_{L^{r'p'}}}}{DC_I^2}2^{\gamma/(\gamma-1)}|\Gamma|^{(\gamma-1)/2} 
\]
which directly implies that (back to the tilde notation) $\tilde \labelForVar \leq \norm{\labelForVar_0}{L^\infty(\Gamma_0)} + B.$ Transforming back, we find actually that
\[\labelForVar(t) \leq e^{\lambda t}\left(\norm{\labelForVar_0}{L^\infty(\Gamma_0)} + \frac{\norm{e^{-\lambda t}g^+}{L^\infty_{L^{r'p'}}}}{DC_I^2}2^{(\frac{2}{rp'}+\frac{p-2}{p})\slash (\frac{2}{rp'}-\frac 2p)}|\Gamma|^{\frac{1}{rp'}-\frac 1p}\right).\]
\end{proof}

\begin{remark}\label{rem:gain}As we wrote at the end of \S \ref{sec:existence}, Lemma \ref{lem:energyEstimates} shows that $v$ is bounded in the space $Q(\Gamma)$ and not just in $L^\infty_{L^2(\Gamma)}$, so we might expect a less strict restriction on the dimension for Theorem \ref{thm:existence} were we to use this in the De Giorgi method of the previous lemma to bound $w$ (see \S \ref{sec:subBoundForw}). Were we to utilise the embedding \eqref{eq:gn} instead of the Sobolev inequality \eqref{eq:sobolevInequality} on \eqref{eq:usedEq}, we would lose the $L^\infty$ requirement in time on the right hand side data and instead require a bound in $L^r_{L^q}$ for $r, q > 2$ (the downside is the increased spatial regularity), with $r$ and $q$ related through the condition in Lemma \ref{lem:gn}. However, the condition \eqref{eq:maximalRegularity} for such a right hand side $f$ in $L^r_{L^q(\Gamma)}$ again just translates to requiring $d \leq 3$, so we do not gain anything by using \eqref{eq:gn}.
\end{remark}
One might think that we could prove the bound on $z$ using the previous lemma just like it was used to prove the bound on $w$. The positive part of the right hand side of the $z$ equation is precisely $\delta_{k}^{-1}uw$, which satisfies
\[\frac{1}{\delta_{k}}uw \leq \frac{\norm{w}{L^\infty_{L^\infty(\Gamma)}}}{\delta_{k}}u.\]
However, $u$ is known only to be in $L^\infty_{L^2(\Omega)}$ whereas we need it bounded in $L^\infty_{H^1(\Omega)}$ (by the trace theorem) in order to apply Lemma \ref{lem:deGiorgi}. Therefore, we need a different stipulation answered by the next lemma (but note the condition on the dimension $d$ cf. the previous lemma). Again we will employ a De Giorgi iteration scheme of the type discussed earlier (see \eqref{L-U}).

\begin{lem}[De Giorgi II]\label{lem:deGiorgiSecond}For dimensions $d \leq 2$, the weak solution of the equation \eqref{eq:linearIVP}
given $g \in L^2_{L^2(\Gamma)}$ with $g^+ \in L^2_{L^{2+\epsilon}(\Gamma)}$  and $\labelForVar_0 \in L^\infty(\Gamma_0)$ satisfies
\[\norm{a}{L^\infty_{L^\infty(\Gamma)}} \leq e^{(\norm{\sgrad \cdot \mathbf V_\Gamma}{\infty}+D)T}\left(\norm{a_0}{L^\infty(\Gamma_0)}+ C(T)\min(1\slash 2, D)^{-1}\norm{g^+}{L^2_{L^{2+\epsilon}(\Gamma)}}\right)\]
for almost every $t \in [0,T]$, where $C$ is independent of $t$.
\end{lem}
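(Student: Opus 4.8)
The plan is to run a De~Giorgi $L^2$--$L^\infty$ iteration of the ``first type'' \eqref{L-U}, following the template of Lemma~\ref{lem:deGiorgi} but replacing the Stampacchia conclusion by a genuine nonlinear recurrence; this is exactly what lets us trade the $L^\infty_{L^2}$ hypothesis on $g^+$ for the hypothesis $g^+\in L^2_{L^{2+\epsilon}(\Gamma)}$ (weaker in time, at the price of a little extra spatial integrability). First I would pass to the exponentially rescaled unknown $\tilde a := e^{-\lambda t}a$ with $\lambda := D + \norm{\sgrad\cdot\mathbf V_\Gamma}{\infty}$, just as in Lemma~\ref{lem:deGiorgi}: after the rearrangement $a\,\sgrad\cdot\mathbf V_p + \sgrad\cdot(\mathbf J_\Gamma a) = a\,\sgrad\cdot\mathbf V_\Gamma + \mathbf J_\Gamma\cdot\sgrad a$, the rescaled equation has zeroth order coefficient $\sgrad\cdot\mathbf V_\Gamma + \lambda \ge D \ge 0$. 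Dropping the tildes, I fix $k_0 := \norm{a_0}{L^\infty(\Gamma_0)}$, choose levels $c_k := k_0 + M(1-2^{-k})$ for a height $M>0$ to be fixed at the very end, and set $a_k := (a-c_k)^+$, $A_k(t) := \{x\in\Gamma(t): a(t,x)>c_k\}$ and $\mu_k := \esssup_t |A_k(t)|$.

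Testing \eqref{eq:linearIVP} (in rescaled form) with $a_k$ — legitimate by the same density/continuity facts invoked in Lemma~\ref{lem:deGiorgi} behind \eqref{eq:surfacePosId}, and with $a_k(0)\equiv 0$ since $c_k\ge k_0$ — and integrating by parts on the closed surface, the only delicate point is the bookkeeping of the zeroth order and advection terms: writing $a\,a_k = a_k^2 + c_k a_k$ and using $c_k a_k \ge 0$ together with the good sign of $\sgrad\cdot\mathbf V_\Gamma+\lambda$, the terms linear in $a_k$ can simply be discarded, while all remaining lower order contributions are absorbed. This yields an energy estimate of the form
\[
U_k := \sup_{t}\norm{a_k(t)}{L^2(\Gamma(t))}^2 + \min(\tfrac12,D)\,\norm{\sgrad a_k}{L^2_{L^2(\Gamma)}}^2 \;\le\; C\,\min(\tfrac12,D)^{-1}\int_0^T\!\!\int_{\Gamma(t)} f^+ a_k .
\]
Two ingredients then feed the iteration. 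Because $d\le 2$, the Sobolev embedding $H^1(\Gamma(t))\cts L^Q(\Gamma(t))$ holds for \emph{every} finite $Q$, so $\norm{a_k}{L^2_{L^Q(\Gamma)}} \le C_Q\,\min(\tfrac12,D)^{-1/2}U_k^{1/2}$. And on $A_k(t)$ one has $a_{k-1}(t) > c_k - c_{k-1} = M2^{-k}$, so Chebyshev gives $\mu_k (M2^{-k})^2 \le \sup_t\norm{a_{k-1}(t)}{L^2(\Gamma(t))}^2 \le U_{k-1}$, i.e.\ $\mu_k \le M^{-2}4^k U_{k-1}$.

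Now I estimate the right hand side using $g^+\in L^2_{L^{2+\epsilon}(\Gamma)}$: by Hölder in space on $A_k(t)$ (valid since $(2+\epsilon)'<2\le Q$), Hölder in time, and the embedding above,
\[
\int_0^T\!\!\int_{\Gamma(t)} f^+ a_k \;\le\; \norm{f^+}{L^2_{L^{2+\epsilon}(\Gamma)}}\,\mu_k^{\theta}\,\norm{a_k}{L^2_{L^Q(\Gamma)}} \;\le\; C\,\norm{g^+}{L^2_{L^{2+\epsilon}(\Gamma)}}\,\mu_k^{\theta}\,U_k^{1/2}, \qquad \theta := \tfrac{1}{(2+\epsilon)'} - \tfrac1Q .
\]
Combining with the energy estimate (absorbing $U_k^{1/2}$) and the Chebyshev bound gives
\[
U_k \;\le\; C\,\norm{g^+}{L^2_{L^{2+\epsilon}(\Gamma)}}^2\,\mu_k^{2\theta} \;\le\; \Big(C\,\norm{g^+}{L^2_{L^{2+\epsilon}(\Gamma)}}^2\,M^{-4\theta}\Big)\,\beta^{k}\,U_{k-1}^{\,1+s}, \qquad \beta := 16^{\theta},\quad s := 2\theta-1 .
\]
This is the recurrence \eqref{L-U}, and it closes precisely when $s>0$, i.e.\ $\theta>\tfrac12$. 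Since $\tfrac{1}{(2+\epsilon)'}=\tfrac{1+\epsilon}{2+\epsilon}>\tfrac12$ exactly when $\epsilon>0$, one can take $Q>\tfrac{2(2+\epsilon)}{\epsilon}$ to force $\theta>\tfrac12$ — and this is the one place where both $\epsilon>0$ and $d\le 2$ are essential, since for $d=3$ only $H^1(\Gamma)\cts L^6(\Gamma)$ is available, which would require $\epsilon>1$, incompatible with the integrability $\epsilon=1$ that the trace of $u\in L^2_{H^1(\Omega)}$ supplies for the right hand side of the $z$-equation (the intended application). With $s>0$ in hand, the standard De~Giorgi iteration lemma gives $U_k\to 0$ once $U_0$ is small relative to $M$; since $U_0\le C(T,D)\norm{g^+}{L^2_{L^{2+\epsilon}(\Gamma)}}^2$ and, because $s+1=2\theta$, the smallness threshold scales like $M^{(s+1)/(2\theta)}=M$, it suffices to choose $M=C(T)\min(\tfrac12,D)^{-1}\norm{g^+}{L^2_{L^{2+\epsilon}(\Gamma)}}$. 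Then $a\le k_0+M$ a.e.; undoing the rescaling ($a=e^{\lambda t}\tilde a$) produces the asserted bound, and running the same argument for $-a$ gives the matching lower bound.

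I expect the main obstacle to be the recurrence step: tuning the Hölder exponents so that the power of the level-set measure $\mu_k$ extracted from the right hand side estimate, combined with the Chebyshev exponent, yields an iteration exponent strictly greater than $1$. This numerology is exactly what pins the result to $d\le 2$; by contrast, all the moving-domain technicalities — the transport identities, the continuity of $t\mapsto\int_{\Gamma(t)}|a_k(t)|^2$ used to make sense of $\sup_t$, and the integration by parts with the positive part — are handled verbatim as in Lemma~\ref{lem:deGiorgi}.
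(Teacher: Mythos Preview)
Your argument is correct and gives the stated bound with the right dependence on $\min(1/2,D)^{-1}$ and $\norm{g^+}{L^2_{L^{2+\epsilon}(\Gamma)}}$; the numerology $s=2\theta-1>0\iff Q>2(2+\epsilon)/\epsilon$ is exactly what forces $d\le 2$, and your observation about the $d=3$ case and the intended application to the $z$ equation is on point.

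The paper's proof follows the same overall De~Giorgi scheme (same exponential rescaling, same energy inequality \eqref{eq:2}) but organises the iteration differently. Instead of iterating on the energy $U_k$ and controlling the pointwise-in-time level set measure $\mu_k$ via $L^2$ Chebyshev, the paper takes as iteration quantity a \emph{time-integrated} level-set measure $z_n=\big(\int_0^T|A_{k_n}(t)|^{2(1+\epsilon)/(2+\epsilon)}\big)^{1/\lambda}$ and closes the loop with the mixed-space Gagliardo--Nirenberg inequality of Lemma~\ref{lem:gn} (relating $L^{r_*}_{L^{q_*}(\Gamma)}$ to $\norm{\cdot}{Q(\Gamma)}$) in place of your pointwise-in-$t$ Sobolev embedding $H^1(\Gamma)\hookrightarrow L^Q(\Gamma)$; Chebyshev is then applied with a higher exponent $r$ tuned to Lemma~\ref{lem:gn}. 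The resulting recurrence has $s=\tfrac2d-\tfrac{2}{2+\epsilon}$, i.e.\ $d<2+\epsilon$, which is the same constraint you obtain. What your route buys is simplicity: you avoid Lemma~\ref{lem:gn} entirely and need only the standard Sobolev inequality, at the cost of tracking $\mu_k$ rather than a single scalar quantity. What the paper's route buys is a closer fit to the Lady\v{z}enskaja framework and a direct interface with the $Q(\Gamma)$-norm already used elsewhere (e.g.\ in the $u$ bound of \S\ref{sec:uBound} via Lemma~\ref{lem:interpolatedSobolev}). Either way the outcome and the dimensional threshold coincide.
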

\begin{proof}
We will adapt \cite[\S III.7]{Lady} for our setting of an evolving surface and our specific choice of exponents on the right hand side data. Let us suppose that $a_0=0$ for now. Like the proof of Lemma \ref{lem:deGiorgi}, we may transform, relabel and manipulate the equation so that from \eqref{eq:2}, we have for $k \geq k_0 := \norm{a_0}{L^\infty(\Gamma_0)}$,
%
%
\[ 
\min\left(\frac{1}{2},D\right)\norm{a_k}{Q(\Gamma)}^2\leq \norm{f^+}{L^2_{L^{2+\epsilon}}}\norm{a_k}{L^{2}_{L^{\frac{2+\epsilon}{1+\epsilon}}}}.
\]
The spaces involved in the $a_k$ norm on the right hand side do not satisfy the conditions of Lemma \ref{lem:gn} so we will apply first H\"older's inequality with suitable exponents. If we define $A_k(t)=\{x\in\Gamma(t)\mid a(t,x)>k\}$ (as before),
\[
\norm{a_k}{L^{2}_{L^{\frac{2+\epsilon}{1+\epsilon}}}}\leq \left(\int |A_k(t)|^{\frac{2(1+\epsilon)}{2+\epsilon}}\right)^\frac{1}{2\lambda'}\norm{a_k}{L^{2\lambda}_{L^{\frac{2+\epsilon}{1+\epsilon}\lambda}}},
\]
where $\lambda=2(1+\frac 1d)-\frac{2}{2+\epsilon}$. By Lemma \ref{lem:gn}, the norm on the right hand side becomes
\[
\norm{a_k}{L^{2\lambda}_{L^{\frac{2+\epsilon}{1+\epsilon}\lambda}}}\leq C_1(T)\norm{a_k}{Q(\Gamma)}.
\]
Putting it all together we have, denoting $m=\min(1/2,D)$
\[
\norm{a_k}{Q(\Gamma)}\leq \frac{C_1(T)}{m}\norm{f^+}{L^2_{L^{2+\epsilon}}}\left(\int |A_k(t)|^{\frac{2(1+\epsilon)}{2+\epsilon}}\right)^\frac{1}{2\lambda'}.
\]
Set now $k_n=(2-2^{-n})N$ for some large $N$ to be defined later. Note that $k_0=N$ and that $k_n\uparrow 2N$ as $n$ tends to infinity. Finally, define
\[
z_n=\left(\int_0^T |A_{k_n}(t)|^{\frac{2(1+\epsilon)}{2+\epsilon}}\right)^\frac{1}{\lambda}.
\]
This sequence of positive numbers will play the role of $U_k$ as presented in \eqref{L-U}. From the previous inequality we obtain
\[
\norm{a_k}{Q(\Gamma)} \leq \frac{C_1(T)}{m}\norm{f^+}{L^2_{L^{2+\epsilon}}}z_n^{\frac{\lambda-1}{2}}.
\]
Now, from the definition of $k_n$ we see that if $a>k_n$ then $a>k_{n-1} + 2^{-n}N$ and this implies
$\{ a> k_n\} \subset \{a_{k_{n-1}} > 2^{-n}N\}$
So, using Chebyshev's inequality
for some $r$ to be defined later,
\[|\{ a> k_n\} | \leq | \{a_{k_{n-1}} > 2^{-n}N\}| \leq \frac{1}{(2^{-n}N)^r} \int_{\{a_{k_{n-1}} > 2^{-n}N\}} |a_{k_{n-1}}|^r\]
and thus
\begin{align*}
z_n 
   &\leq \left(\int_0^T\left(\frac{1}{2^{-n}N}\right)^\frac{2r(1+\epsilon)}{2+\epsilon}\left(\int_{\Gamma(t)}|a_{k_{n-1}}|^r\right)^\frac{2(1+\epsilon)}{2+\epsilon}   \right)^\frac{1}{\lambda} = \left(\frac{1}{2^{-n}N}\right)^\frac{2r(1+\epsilon)}{\lambda(2+\epsilon)}\left(\int_0^T\left(\int_{\Gamma(t)}|a_{k_{n-1}}|^r\right)^\frac{2(1+\epsilon)}{2+\epsilon}   \right)^\frac{1}{\lambda}.
\end{align*}
Given our choice of $\lambda$, if we pick $r=\frac{2+\epsilon}{1+\epsilon}\lambda$ then the norm on the right hand side satisfies the hypothesis of Lemma \ref{lem:gn} and therefore
\begin{align*}
z_n &\leq\frac{2^{2n}}{N^2}C_1(T)\norm{a_{k_{n-1}}}{Q(\Gamma)}^2\\
    & \leq \frac{2^{2n}C_1(T)^3}{N^2m^2}\norm{f^+}{L^2_{L^{2+\epsilon}}}^2 z_{n-1}^{\lambda-1}\\
    & = C_2 \beta^{n}z_{n-1}^{1+s},
\end{align*}
where $C_2 := \frac{C_1(T)^3}{N^2m^2}\norm{f^+}{L^2_{L^{2+\epsilon}}}^2 =: \frac{C_3}{N^2},$ $\beta=4$ and $s=\left(\frac 2d -\frac{2}{2+\epsilon} \right)$ and hence we are in the conditions exposed in \eqref{L-U} if $d< 2+\epsilon$ (needed to ensure $s>0$). It remains only to show the bound on $z_0$. It is straighforward to see that
$$z_0\leq T^\frac{1}{\lambda}|\Gamma|^\frac{2(1+\epsilon)}{\lambda(2+\epsilon)} =: C_4$$
and so if we now choose $N^2 \geq 2C_4^s C_3\beta ^{1/s},$ then
$z_0\leq (2C_2)^{-1/s}\beta^{-1/s^2}$
and we conclude that $z_n$ converges to zero as $n$ tends to infinity. This implies that $a(t)\leq 2N$ almost everywhere on $\Gamma(t)$.
\end{proof}
\subsection{The bound on $z$}\label{sec:boundOnZ}
Recalling that the positive part of the right hand side of the $z$ equation is $\delta_{k}^{-1}uw$, we know by \eqref{eq:boundOnn} that $u$ is bounded in $L^\infty_{L^2(\Omega)} \cap L^2_{H^1(\Omega)}$ which, since $H^{1\slash 2}(\Gamma) \cts L^{2+\epsilon}(\Gamma)$ \cite[Theorem 3.81]{Demengel} for all dimensions (where $\epsilon$ depends on the dimension), implies
\[\norm{u}{L^{2+\epsilon}(\Gamma(t))} \leq C_1\norm{u}{H^{1\slash 2}(\Gamma(t))} \leq C_2\norm{u}{H^1(\Omega(t))},\]
so that (the trace of) $u$ is bounded in $L^2_{L^{2+\epsilon}(\Gamma)}$. This gain in the spatial regularity for $u$, combined with the $L^\infty$ bound for $w$, allows us to apply the previous lemma and conclude the boundedness of $z$ since $z$ satisfies an equation of the form \eqref{eq:linearIVP} with $g^+=\delta_k^{-1}uw$.
\subsection{The bound on $u$}\label{sec:uBound}
Boundedness for $u$ is more complicated because we are dealing with two domains ($\Omega$ and its boundary) due to the Robin boundary condition. Let us use the notation $\tilde u = ue^{-\lambda_u t}$ so that $\dot u = e^{\lambda_u t}\md \tilde u + \lambda_u u.$
The equation for $\tilde u$ is
\begin{equation*}\label{eq:foru}
\begin{aligned}
\md{ \tilde u} + \tilde u(\grad \cdot \mathbf V_\Omega  + \lambda_u) - \delta_\Omega \Delta \tilde u + \grad \cdot (\mathbf{J}_{\Omega}u) &=0\\
\delta_\Omega\nabla \tilde u \cdot \nu &= \delta_{k'}^{-1}e^{- \lambda_u t} z -\delta_k^{-1} \tilde u  w +  j\tilde u\\
\tilde u(0) &= u_0.
\end{aligned}
\end{equation*}
Consider the following two problems:
\begin{equation}\label{eq:3}
\begin{aligned}
\dot{a} + a(\grad \cdot \mathbf V_p  + \lambda_u) - \delta_\Omega \Delta a + \grad \cdot (\mathbf{J}_{\Omega}a)&=0\\
\delta_\Omega\nabla a \cdot \nu &= \delta_{k'}^{-1}e^{- \lambda_u t} z -\delta_k^{-1} a w + j(a+b)\\
a(0) &= 0
\end{aligned}
\end{equation}
and
\begin{align*}
\dot{b} + b(\grad \cdot \mathbf V_p  + \lambda_u) - \delta_\Omega \Delta b + \grad \cdot (\mathbf{J}_{\Omega}b)&=0\\
\delta_\Omega \nabla b \cdot \nu &= -\delta_k^{-1} b  w\\
b(0) &= u_0.
\end{align*}
It is clear that $(a+b)$ is a solution of the problem satisfied by $\tilde u$ above. The $L^\infty$ bound $b(t) \leq \norm{u_0}{L^\infty(\Gamma_0)}$ follows after testing the $b$ equation with $(b-M_0)^+$ where $M_0:= \norm{u_0}{L^\infty(\Gamma_0)}$, taking $\lambda_u \geq \norm{\grad \cdot \mathbf V_\Omega}{\infty}$, using \eqref{eq:bulkPosId}, the interpolated trace inequality and Gronwall's lemma. So it remains for us to show that the solution of  \eqref{eq:3}
is bounded given $b \in L^\infty_{L^\infty(\Omega)}$. 

We will again apply a De Giorgi method. Of course there is a lot of similarity to Lemmas \ref{lem:deGiorgi} and \ref{lem:deGiorgiSecond} but we follow the work of Nittka \cite{Nittka} here, which offers several improvements and corrections over the similar material presented in \cite[\S III.7--8]{Lady}.  Let $B_k(t) := \{ x \in \Gamma(t) \mid a(t,x) \geq k\}.$ For $k \geq 0$ let us test with $a_k = (a-k)^+$ and use \eqref{eq:bulkPosId} to get
\begin{align*}
\frac{1}{2}&\int_{\Omega(t)}|a_k(t)|^2  + \int_0^t \int_{\Omega(s)}\delta_\Omega|\nabla a_k|^2 + (\nabla \cdot \mathbf V_\Omega + \lambda_u)aa_k - \frac{1}{2}|a_k|^2\nabla \cdot \mathbf V_\Omega\\
&+ \int_0^t\int_{\Gamma(s)} \frac 12 ja_k^2-(\delta_{k'}^{-1}e^{- \lambda_u t} z-\delta_k^{-1} a  w)a_k- j(a+b)a_k  =0.
\end{align*}
The two terms involving the velocity can be combined if we assume that $\lambda_u \geq \norm{\grad \cdot \mathbf V_\Omega}{\infty}$ (we shall specify $\lambda_u$ later on) and use $a=(a-k) + k$. Now, since $(a+b)a_k = |a_k|^2 + ka_k + ba_k$ and $a_k = \frac{1}{k} k(a-k)^+ \leq \frac{1}{k} (k^2 + |(a-k)^+|^2)$, we have
\begin{align*}
(a+b)a_k 
\leq (2+\norm{b}{L^\infty_{L^\infty(\Omega)}})|a_k|^2 + \frac{\norm{b}{L^\infty_{L^\infty(\Omega)}}}{k}k^2 + k^2
\end{align*}
for $k \geq 1$. Using this, the last term in the weak formulation above can be manipulated as
\begin{align*}
&-\int_0^t\int_{\Gamma(s)}\frac 12 ja_k^2 - (\delta_{k'}^{-1}e^{- \lambda_u t} z-\delta_k^{-1} a w)a_k - j(a+b)a_k\\
&\quad\leq \int_0^t\int_{\Gamma(s)}\delta_{k'}^{-1}\norm{z}{L^\infty_{L^\infty(\Gamma)}}a_k + \norm{j}{\infty}\left(\left(\frac 52+\norm{b}{L^\infty_{L^\infty(\Omega)}}\right)|a_k|^2 + \frac{\norm{b}{L^\infty_{L^\infty(\Omega)}}}{k}k^2 + k^2\right) \\
&\quad\leq \int_0^t\int_{\Gamma(s)}\frac{\delta_{k'}^{-1}\norm{z}{L^\infty_{L^\infty(\Gamma)}}}{k}(k^2+|a_k|^2)+ \norm{j}{\infty}\left(\left(\frac 52+\norm{b}{L^\infty_{L^\infty(\Omega)}}\right)|a_k|^2 + \frac{\norm{b}{L^\infty_{L^\infty(\Omega)}}}{k}k^2 + k^2\right)\\
&\quad=\int_0^t\int_{\Gamma(s)}\frac{\alpha}{k}k^2  + \left(\alpha+\frac 52\norm{j}{\infty}\right)|a_k|^2 + \norm{j}{\infty}k^2
\end{align*}
where we defined $\alpha := \delta_{k'}^{-1}\norm{z}{L^\infty_{L^\infty(\Gamma)}} + \norm{j}{\infty}\norm{b}{L^\infty_{L^\infty(\Omega)}}$.  Now we use the interpolated trace inequality on the (integral of the) second term above with \[\epsilon = \frac{\delta_\Omega}{2(\alpha + \frac 52\norm{j}{\infty})} \quad\text{and}\quad C_\epsilon = \frac{C}{\epsilon} = 2\delta_\Omega^{-1}C\left(\alpha + \frac 52\norm{j}{\infty}\right)\]
to find, taking $k \geq 1$,
\begin{align*}
\frac{1}{2}\int_{\Omega(t)}|a_k(t)|^2 &\leq
-\int_0^t \int_{\Omega(s)}\frac{\delta_\Omega}{2}|\nabla a_k(t)|^2  + (\lambda_u + \frac{\nabla \cdot \mathbf V_\Omega}{2})|a_k(t)|^2 + \left(\frac{\alpha}{k} + \norm{j}{\infty}\right)\int_0^t\int_{B_k(s)}k^2\\
&\quad + 2C\delta_\Omega^{-1}(\alpha + \frac 52\norm{j}{\infty})^2\int_0^t\int_{\Omega(s)}|a_k(t)|^2\\
&\leq -\frac{\delta_\Omega}{2}\int_0^t \int_{\Omega(s)}|\nabla a_k(t)|^2 + \left(\frac{\alpha}{k}+ \norm{j}{\infty}\right)\int_0^t\int_{B_k(s)}k^2 
\end{align*}
where for the last inequality we picked $\lambda_u = \norm{\nabla \cdot \mathbf V_\Omega}{\infty} + 2C\delta_\Omega^{-1}(\alpha + \frac 52\norm{j}{\infty})^2$. This is
\begin{align*}
\frac{1}{2}\int_{\Omega(t)}|a_k(t)|^2 + \frac{\delta_\Omega}{2}\int_0^t \int_{\Omega(s)}|\nabla a_k(t)|^2
&\leq  \frac{\alpha}{k}\int_0^t\int_{B_k(s)}k^2 + \norm{j}{\infty}\int_0^t\int_{B_k(s)}k^2,
\end{align*}
and now take essential supremums over $t$, setting $m=\min(1, \delta_\Omega)$:
\begin{align}
 \frac{1}{2}m\norm{a_k}{Q(\Omega)}^2 
&\leq \left(\frac{\alpha}{k}k^2+ \norm{j}{\infty}k^2\right)\int_0^T\int_{B_k(s)}.\label{eq:prelim2}
\end{align}
By Holder's inequality,
\begin{align}
\left(\frac{\alpha}{k}+\norm{j}{\infty}\right)\int_0^T\int_{B_k(s)}
&\leq \left(\frac{\alpha}{k}+ \norm{j}{\infty}\right) |\Gamma|^{1\slash q}T^{1\slash r}\norm{\chi_{B_k}}{L^{r\slash (r-1)}_{L^{q\slash (q-1)}}}  \leq (1 + C_1\norm{j}{\infty})\norm{\chi_{B_k}}{L^{r\slash (r-1)}_{L^{q\slash (q-1)}}} \label{eq:pre1}
\end{align}
where we have taken $k \geq \alpha|\Gamma|^{1\slash q}T^{1\slash r}$ and set $C_1:=|\Gamma|^{1\slash q}T^{1\slash r}$ and also we constrain $r$ and $q$ so that
\begin{equation}\label{eq:conditionOnrq}
\frac{1}{r} + \frac{d}{2q} < \frac{1}{2}.
\end{equation}
 Now define $\kappa$, $r_*$ and $q_*$ by
\[\frac{1}{r} + \frac{d}{2q} = \frac{1}{2}-\frac{\kappa (d+1)}{2}, \qquad  r_* = \frac{2(1+\kappa)r}{r-1}, \qquad \text{and} \qquad q_* = \frac{2(1+\kappa)q}{q-1}.\]
The condition \eqref{eq:conditionOnrq} implies that $\kappa > 0$. Then \eqref{eq:pre1} can be written like
\begin{align*}
\left(\frac{\alpha}{k}+ \norm{j}{\infty}\right)\int_0^T\int_{B_k(s)} \leq (1 + C_1\norm{j}{\infty})\norm{\chi_{B_k}}{L^{r_*}_{L^{q_*}}}^{2(1+\kappa)}.
\end{align*}
So we find that \eqref{eq:prelim2} becomes
\begin{align*}
\frac{1}{2}m\norm{a_k}{Q(\Omega)}^2 &\leq (1+C_1\norm{j}{\infty})k^2\norm{\chi_{B_k}}{L^{r_*}_{L^{q_*}}}^{2(1+\kappa)} = C_2k^2 \left(\int_0^T |B_k|^{r_*\slash q_*}\right)^{2(1+\kappa)\slash r_{*}}\label{eq:A3}
\end{align*}
where we set $C_2 := 1+C_1\norm{j}{\infty}$.
This, as we said before, holds whenever
\begin{equation}\label{eq:largenessOnk}
k \geq \max\left(1, \alpha C_1\right).
\end{equation}
Now the proof is analogous to what is done in the proof of Lemma \ref{lem:deGiorgiSecond} where instead of the inequality of Lemma \ref{lem:gn} we use Lemma \ref{lem:interpolatedSobolev}. Define $k_n := (2-2^{-n})N$ for a large $N$ and
\[z_n := \left(\int_0^T |B_{k_n}(t)|^{r_*\slash q_*}\right)^{2\slash r_*}.\]
Then we in fact find
\[z_{n+1} \leq 2^6m^{-1}C_2C_I4^nz_n^{1+\kappa}\]
(here $C_I$ is the constant from Lemma \ref{lem:interpolatedSobolev}) and if we take $\hat k$ such that it satisfies \eqref{eq:largenessOnk}, then 
\[(N-\hat k)^2z_0  \leq 2m^{-1}C_2C_I\hat{k}^2 C_0.\]
Defining $C_3:=|\Gamma|^{2(1+\kappa)\slash q_*}T^{2(1+\kappa)\slash r_{*}}$
and picking $N = \hat k(\sqrt{C_3}2^{\frac 12  + \frac 3\kappa + \frac{1}{\kappa^2}}(m^{-1}C_2C_I)^{\frac{1}{2\kappa} + \frac 12} + 1)$, we have
\begin{align*}
z_0 &\leq 
 (2^6m^{-1}C_2C_I)^{-1\slash \kappa}4^{-1\slash \kappa^2}.
\end{align*}
By \cite[II, Lemma 5.6]{Lady}, we have $z_n \to 0$ as $n \to \infty.$ This suggests, since $k_n \to 2N$, that $a(t) \leq 2N$ almost everywhere on $\Gamma(t)$.
 Putting everything together, we find
\[u(t) \leq e^{\norm{\nabla \cdot \mathbf V_\Omega}{\infty}T + C_1\delta_\Omega^{-1} (\alpha + \frac 52\norm{j}{\infty})^2T}\left(\max\left(1, \alpha C_1\right)(C_4m^{-(\frac{1}{2\kappa} + \frac 12)}+ C_5)+\norm{u_0}{L^\infty(\Omega_0)}\right).\]
\section{Strong solutions for $u$}\label{sec:strongSolutionsForu}
In order to show that $\dot u$ is in fact a function and not just an element of a dual space, we would like to test with $\dot u$ but obviously this is not possible. We may try a smoothing technique like using the Galerkin approximation for the $u$ equation and testing with the finite-dimensional time derivative $\dot u_n$ but this does not help either since we would actually need a uniform bound in $L^\infty$ on the $u_n$ which does not necessarily follow from the $L^\infty$ bound on $u$. 
Instead we mollify in time the $u$ equation and try to make use of the $L^\infty$ bound obtained for $u$ in Theorem \ref{thm:LinftyBounds}. Essentially we will test the $u$ equation with an approximation of $\dot u$ by Steklov averaging \cite{DiBenedetto, DGV, Lady}. Actually, we will do this for the equation pulled back onto a reference domain. Before we proceed, let us discuss the Steklov averaging technique and some properties. For a function $v \in L^p(0,T;X)$ where $X$ is a Banach space, we define its Steklov average $v_h$ by
\[v_h(t) = \begin{cases}
\frac 1h \int_t^{t+h} v(s)\;\mathrm{d}s &: 0 < t \leq T-h\\
0 &: T-h < t \leq T
\end{cases}\]
for $0< h < T$. For $p \neq \infty$, it is well known that for any $\epsilon \in (0,T)$, we have $v_h \in L^p(0,T-\epsilon;X)$ with $\norm{v_h}{L^p(0,T-\epsilon;X)} \leq \norm{v}{L^p(0,T;X)}$ for all $h \in (0,\epsilon)$, and $v_h \to v$ in $L^p(0,T-\epsilon;X)$ as $h \to 0$. Also, $v_h$ has the (strong) a.e. derivative
\[\partial_t v_h(t) = \frac{v(t+h)-v(t)}{h} =: D_hv(t).\] 
Now, to obtain the weak form corresponding to the equation on the reference domain, we introduce some notation. We have defined in Section \ref{sec:weakFormulation} the flow $\Phi$ such that 
$$\Phi_t:\overline{\Omega_0}\rightarrow\overline{\Omega(t)} \,\text{ with }\, \frac{d}{dt}\Phi_t(\cdot)=\mathbf{V}_p(t,\Phi_t(\cdot)).$$
Let $\mathbf D\Phi_t$ be the Jacobian matrix of $\Phi_t$ with inverse $\mathbf{M}(t) := (\mathbf D\Phi_t)^{-1}$ and let $J_t=\det{\mathbf D\Phi_t}$ be its determinant. Define $\mathbf{A}(t) := \delta_\Omega J_t\mathbf{M}(t)\mathbf{M}(t)^T$ and $\omega_t := |\mathbf{M}(t)^T \nu_0|$, where $\nu_0$ is the outward  normal to $\Gamma_0$.

Below, $\tilde{g}\colon \Omega_0\times[0,T]\rightarrow \R$ denotes the pullback of a function $g \in L^2_X$ by $\tilde{g}(\xi,t)=g(\Phi_t(\xi),t)$; we use the same notation for the pullback of functions defined on $\Gamma(t)$. 
 Pulling back the weak form of the $u$ equation gives (see \cite[Chapter 9, \S 4.2]{Delfour} for the boundary integral)
\[\langle \tilde u_t, J_t\tilde \varphi \rangle + \int_{\Omega_0}\mathbf{A}(t)\grad \tilde u \grad \tilde \varphi +  \tilde u \tilde \varphi V_0J_t + 
J_t\widetilde{\mathbf{J}_\Omega} \cdot \mathbf{M}(t)^T \grad \tilde u \tilde \varphi = \int_{\Gamma_0}\left( \frac{\tilde z}{\delta_{k'}} - \frac{\tilde u \tilde w}{\delta_k}\right)\tilde \varphi J_t \omega_t + j_0\tilde{u}\tilde{\varphi}J_t\omega_t,\]
where $V_0=\widetilde{\nabla\cdot\mathbf{V}_\Omega}$ is the pullback of $\nabla\cdot\mathbf{V}_\Omega$, and similarly for $j_0=\widetilde{(\mathbf{V}_\Omega-\mathbf{V}_\Gamma)}\cdot\tilde{\nu}$.
Now setting $\psi = J_t\tilde \varphi$, this becomes
\[\langle \tilde u_t, \psi \rangle + \int_{\Omega_0}\mathbf{A}(t)\grad \tilde u \grad (J_t^{-1}\psi) + \tilde u  \psi V_0 + 
\widetilde{\mathbf{J}_\Omega} \cdot \mathbf{M}(t)^T \grad \tilde u \psi = \int_{\Gamma_0}\left( \frac{\tilde z}{\delta_{k'}} - \frac{\tilde u \tilde w}{\delta_k}\right)\psi \omega_t+  j_0\tilde{u}\psi\omega_t.\]
Since $\mathbf{A}(t)\grad \tilde u \grad (J_t^{-1}\psi)  = J_t^{-1}\mathbf{A}(t)\grad \tilde u \grad \psi + \psi \mathbf{A}(t)\grad \tilde u \grad J_t^{-1},$ setting $\mathbf{B}(t):=J_t^{-1}\mathbf{A}(t)$, 
 integrating by parts in time and relabelling to remove all the tildes from $\tilde u$, we obtain
\begin{equation}\label{eq:pulledBackWeakForm}
 \frac{d}{dt}\int_{\Omega_0}u\psi  + \int_{\Omega_0}\mathbf{B}(t)\grad u \grad \psi + \psi \mathbf{A}(t)\grad  u \grad J_t^{-1}   +  u \psi V_0 + 
\widetilde{\mathbf{J}_\Omega} \cdot \mathbf{M}(t)^T \grad u \psi = \int_{\Gamma_0}\left( \frac{ z}{\delta_{k'}} - \frac{uw}{\delta_k}\right)\psi \omega_t+ j_0u\psi\omega_t
\end{equation}
if $\psi$ is independent of time. To write the weak form associated to the function $u_h$ (see \cite[Chapter II]{DiBenedetto}), in \eqref{eq:pulledBackWeakForm}, divide by $h$ and integrate over $(t,t+h)$:
\begin{align}
\nonumber \int_{\Omega_0}\partial_t u_h(t)\psi +  \frac 1h\int_{\Omega_0}\int_t^{t+h}\mathbf{B}(s)\grad u \grad \psi &+ \frac 1h\int_{\Omega_0}\int_t^{t+h}\psi \mathbf{A}(s)\grad  u \grad J_s^{-1}  + \frac 1h\int_{\Omega_0}  \int_t^{t+h}u \psi V_0\\
&+ \frac 1h \int_t^{t+h}\int_{\Omega_0}
\widetilde{\mathbf{J}_\Omega} \cdot \mathbf{M}(s)^T \grad u \psi = \frac 1h\int_{\Gamma_0}\int_t^{t+h}\left( \frac{z}{\delta_{k'}} - \frac{uw}{\delta_k}+j_0u\right)\psi \omega_s.\label{eq:pbWeakForm}
\end{align}
The idea is to test with $\partial_t u_h(t)$ and integrate over $t$ and try to find a bound on $u_h'$ in $L^2(0,T;L^2(\Omega_0))$ independent of $h$. This requires us to handle the various terms in the equality above, which is not at all straightforward since the coefficients are time-dependent. For example, in the non-moving setting where the elliptic operator such as the Laplacian is independent of time, we have
\[\left(\langle -Au, v \rangle\right)_h = \langle -Au_h, v \rangle,\]
i.e., the Steklov average commutes with elliptic operator. In our case of a moving domain this equality is no longer true (in fact some extra terms appear) because the coefficients of the operator $A$ depend on time. For this purpose, we need the following auxiliary results.
\subsection{Preliminary results}
We begin with the following fundamental lemma, which follows by a simple integration by parts argument. 
\begin{lem}\label{lem:basicIBP}
Let $\varphi\colon [0,T] \to \mathbb{R}$ be absolutely continuous and $f \in L^1(0,T)$. We have
\begin{align*}
\frac 1h \int_t^{t+h} \varphi(s)f(s) 
&= \varphi(t+h) f_h(t) + D_h\varphi(t)\int_0^t f(s) - \frac 1h\int_t^{t+h}\varphi'(s)\int_0^s f(r).
\end{align*}
\end{lem}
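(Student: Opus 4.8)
The plan is to reduce the identity to a single integration by parts after introducing the antiderivative of $f$. Set $F(s) := \int_0^s f(r)\,\mathrm{d}r$; since $f \in L^1(0,T)$, the function $F$ is absolutely continuous on $[0,T]$ with $F' = f$ almost everywhere. As $\varphi$ is also absolutely continuous (hence bounded) on $[0,T]$, the product $\varphi F$ is absolutely continuous, so the fundamental theorem of calculus applies to it and the integration-by-parts formula holds on the subinterval $[t,t+h] \subset [0,T]$:
\[
\int_t^{t+h} \varphi(s) f(s)\,\mathrm{d}s = \int_t^{t+h}\varphi(s)F'(s)\,\mathrm{d}s = \varphi(t+h)F(t+h) - \varphi(t)F(t) - \int_t^{t+h}\varphi'(s)F(s)\,\mathrm{d}s.
\]

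Next I would rewrite the boundary term $\varphi(t+h)F(t+h) - \varphi(t)F(t)$ in a ``discrete product rule'' form by adding and subtracting $\varphi(t+h)F(t)$:
\[
\varphi(t+h)F(t+h) - \varphi(t)F(t) = \varphi(t+h)\bigl(F(t+h)-F(t)\bigr) + \bigl(\varphi(t+h)-\varphi(t)\bigr)F(t).
\]
Then I would use the elementary identities $F(t+h) - F(t) = \int_t^{t+h} f(s)\,\mathrm{d}s = h\,f_h(t)$ and $\varphi(t+h)-\varphi(t) = h\,D_h\varphi(t)$, together with $F(t) = \int_0^t f(r)\,\mathrm{d}r$, to obtain
\[
\varphi(t+h)F(t+h) - \varphi(t)F(t) = h\,\varphi(t+h)f_h(t) + h\,D_h\varphi(t)\int_0^t f(r)\,\mathrm{d}r.
\]

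Substituting this into the integration-by-parts identity, dividing by $h$, and recalling that $F(s) = \int_0^s f(r)\,\mathrm{d}r$ in the remaining integral yields exactly the claimed formula. There is no genuine obstacle here; the only point that deserves a line of justification is that $\varphi F$ is absolutely continuous, which follows from the fact that the product of two absolutely continuous functions on a compact interval is absolutely continuous (both factors being automatically bounded there), so that $(\varphi F)' = \varphi' F + \varphi f$ almost everywhere and $\varphi F$ recovers its integral.
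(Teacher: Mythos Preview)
Your proof is correct and is precisely the ``simple integration by parts argument'' the paper alludes to without spelling out; introducing the antiderivative $F$ of $f$, integrating by parts on $[t,t+h]$, and splitting the boundary term via the discrete product rule is exactly what is intended.
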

Given functions $g$, $\varphi$ and $k$ defined on $[0,T]\times \Omega_0$, it is convenient to define a map $\mathcal{L}_h$ by the expression
\[\mathcal L_h(g, \varphi, k)(t):= g(t)\left(D_h\varphi(t)K(t) - \frac 1h \int_t^{t+h}\varphi'(s)K(s)\right)\qquad\text{where $K(t) = \int_0^t k(s)$}.\]
Thus the equality of Lemma \ref{lem:basicIBP} can be rewritten as
\begin{align*}
\frac 1h \int_t^{t+h} \varphi(s)f(s) 
&= \varphi(t+h) f_h(t) + \mathcal{L}_h(1, \varphi, f)(t).
\end{align*}
A form of Lemma \ref{lem:basicIBP} for matrix-vector products is given by the following corollary, which can be proved by writing the products componentwise and using the formula for $\mathcal{L}_h$.
\begin{cor}\label{coro:matrix}
Let $\psi \in H^1(\Omega_0)$, $f\in L^2(0,T;H^1(\Omega_0))$ and let $\mathbf{N}(t)=[n_{ij}(t)]$ be a matrix with $n_{ij}\colon [0,T] \to \Omega$ absolutely continuous and $n_{ij}' \in L^1(0,T;L^\infty(\Omega))$. Then
\begin{equation*}  
\frac 1h\int_{\Omega_0}\int_t^{t+h} \mathbf{N}(s)\grad f(s) \grad \psi = \int_{\Omega_0} \mathbf{N}(t+h)\grad f_h(t) \grad \psi + \sum_{ij}\int_{\Omega_0}\mathcal{L}_h(\psi_{x_j}, n_{ij}, f_{x_i})(t).
\end{equation*}
\end{cor}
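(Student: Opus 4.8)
The plan is to reduce the matrix identity to the scalar statement of Lemma \ref{lem:basicIBP}, applied slicewise in the spatial variable, and then reassemble. First I would expand the matrix--vector product componentwise: after relabelling the double sum, write
\[\mathbf{N}(s)\grad f(s)\cdot\grad\psi = \sum_{i,j} n_{ij}(s)\,f_{x_i}(s)\,\psi_{x_j},\]
with the index convention matching the right-hand side of the corollary. Interchanging the order of the $s$- and $x$-integrations on the left-hand side (legitimate by Fubini, since the integrand is in $L^1((t,t+h)\times\Omega_0)$ using $n_{ij}\in L^\infty$, $f_{x_i}\in L^2(0,T;L^2(\Omega_0))$ and $\psi_{x_j}\in L^2(\Omega_0)$) gives
\[\frac 1h\int_{\Omega_0}\int_t^{t+h}\mathbf{N}(s)\grad f(s)\cdot\grad\psi = \sum_{i,j}\int_{\Omega_0}\psi_{x_j}(x)\left(\frac 1h\int_t^{t+h} n_{ij}(s,x)\,f_{x_i}(s,x)\,\mathrm{d}s\right)\mathrm{d}x.\]

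Next, fix a pair $i,j$. For a.e. $x\in\Omega_0$ the function $s\mapsto n_{ij}(s,x)$ is absolutely continuous on $[0,T]$ with derivative $s\mapsto n_{ij}'(s,x)$ in $L^1(0,T)$ (since $n_{ij}'\in L^1(0,T;L^\infty(\Omega))$), and $s\mapsto f_{x_i}(s,x)$ is in $L^2(0,T)\subset L^1(0,T)$ (by Fubini, from $f\in L^2(0,T;H^1(\Omega_0))$). Applying Lemma \ref{lem:basicIBP} at such an $x$ yields, in the compact form of the lemma recorded just before the corollary,
\[\frac 1h\int_t^{t+h} n_{ij}(s,x)\,f_{x_i}(s,x)\,\mathrm{d}s = n_{ij}(t+h,x)\,(f_{x_i})_h(t,x) + \mathcal L_h(1,n_{ij},f_{x_i})(t,x).\]
Since spatial differentiation commutes with the time Steklov average, $(f_{x_i})_h=(f_h)_{x_i}$; and multiplying through by $\psi_{x_j}(x)$, which factors out of the first slot of $\mathcal L_h$, turns $\psi_{x_j}(x)\,\mathcal L_h(1,n_{ij},f_{x_i})(t,x)$ into $\mathcal L_h(\psi_{x_j},n_{ij},f_{x_i})(t,x)$.

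Finally I would multiply by $\psi_{x_j}(x)$, sum over $i$ and $j$, and integrate over $\Omega_0$. The collected first terms reassemble (using the same index convention) into $\int_{\Omega_0}\mathbf{N}(t+h)\grad f_h(t)\cdot\grad\psi$, while the collected remainders give $\sum_{i,j}\int_{\Omega_0}\mathcal L_h(\psi_{x_j},n_{ij},f_{x_i})(t)$, which is precisely the asserted identity. The only point requiring genuine care --- and the main, if modest, obstacle --- is the measurability and integrability bookkeeping: one must check that the pointwise identity holds for a.e. $x$ simultaneously for the finitely many pairs $(i,j)$, and that each resulting term (in particular $D_h n_{ij}(t,\cdot)\int_0^t f_{x_i}(s,\cdot)\,\mathrm{d}s$ and the iterated-integral term) defines an $L^1(\Omega_0)$ function, so that the $x$-integration and the final summation are justified. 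All of this follows routinely from the hypotheses $n_{ij}'\in L^1(0,T;L^\infty(\Omega))$, $f\in L^2(0,T;H^1(\Omega_0))$ and $\psi\in H^1(\Omega_0)$.
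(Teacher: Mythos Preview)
Your proposal is correct and follows exactly the approach indicated in the paper, which simply states that the corollary is proved by writing the products componentwise and using the formula for $\mathcal{L}_h$ from Lemma \ref{lem:basicIBP}. Your additional care with the Fubini and measurability bookkeeping is appropriate but not demanded by the paper's terse treatment.
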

Since $\mathcal{L}_h$ appears a number of times, it is useful to bound it in terms of its arguments. First, define the space
\[X := \{ \varphi \in \operatorname{Lip}(0,T;L^\infty(\Omega_0)) \mid \varphi \text{ is differentiable a.e. $t \in [0,T]$}\}.\]
This is indeed a proper subset (that is, the constraint is not redundant) because $L^\infty(\Omega_0)$ does not have the {Radon--Nikodym} property \cite[Example 1.2.8]{Arendt}.
\begin{lem}\label{lem:boundednessOfL}
The map $\mathcal L_h\colon L^2(0,T;L^2(\Omega_0))\times X \times L^2(0,T;L^2(\Omega_0)) \to L^1(0,T;L^1(\Omega_0))$ and for $g$, $k \in L^2(0,T;L^2(\Omega_0))$ and $\varphi \in X$,
\begin{align*}
&\int_0^T \int_{\Omega_0} \mathcal L_h(g, \varphi, k)(t)
\leq 4\operatorname{Lip(\varphi)}\sqrt{T-h}\norm{g}{L^2(0,T-h;L^2(\Omega_0))} \norm{k}{L^2(0,T;L^2(\Omega_0))}.
\end{align*}
\end{lem}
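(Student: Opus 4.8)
The plan is to unwind the definition of $\mathcal{L}_h$ and estimate the two terms that appear inside it separately, using the Lipschitz hypothesis on $\varphi$ to control both the discrete derivative $D_h\varphi$ and the genuine derivative $\varphi'$, and using the Cauchy--Schwarz inequality together with the contraction property of the Steklov averaging operator to handle the remaining time integrals. Recall $\mathcal{L}_h(g,\varphi,k)(t) = g(t)\big(D_h\varphi(t)K(t) - \tfrac 1h\int_t^{t+h}\varphi'(s)K(s)\,\mathrm{d}s\big)$ with $K(t)=\int_0^t k(s)\,\mathrm{d}s$, so after integrating over $\Omega_0$ and then over $(0,T)$, the first thing to do is split
\[
\int_0^T\int_{\Omega_0}\mathcal L_h(g,\varphi,k) = \int_0^T\int_{\Omega_0} g(t)D_h\varphi(t)K(t) - \int_0^T\int_{\Omega_0}g(t)\frac 1h\int_t^{t+h}\varphi'(s)K(s)\,\mathrm{d}s.
\]

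For the first term, I would use the pointwise bounds $|D_h\varphi(t)| \le \operatorname{Lip}(\varphi)$ (in $L^\infty(\Omega_0)$, since $\varphi \in \operatorname{Lip}(0,T;L^\infty(\Omega_0))$) and $|K(t)| \le \int_0^T |k(s)|\,\mathrm{d}s$ which, by Cauchy--Schwarz in time, is bounded in $L^2(\Omega_0)$-norm by $\sqrt{T}\,\norm{k}{L^2(0,T;L^2(\Omega_0))}$; actually one should be a little more careful and keep $K(t)$ inside, estimating $\int_0^T\int_{\Omega_0}|g(t)||K(t)|\,\mathrm{d}t \le \norm{g}{L^2(0,T-h;L^2)}\big(\int_0^{T-h}\norm{K(t)}{L^2(\Omega_0)}^2\big)^{1/2}$ and then $\norm{K(t)}{L^2(\Omega_0)} \le \sqrt{t}\,\norm{k}{L^2(0,t;L^2(\Omega_0))} \le \sqrt{T}\,\norm{k}{L^2(0,T;L^2)}$, noting $g$ is supported in $(0,T-h)$ after Steklov averaging; so $D_h\varphi$ is only ever evaluated where it makes sense. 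This yields a bound of the form $\operatorname{Lip}(\varphi)\sqrt{T-h}\,\norm{g}{L^2(0,T-h;L^2)}\sqrt{T}\,\norm{k}{L^2(0,T;L^2)}$, but I will be sloppier and just bound $\sqrt{T}$ by absorbing into the constant, or more precisely note $\norm{K(t)}{L^2}\le\sqrt{T-h}\norm k{L^2(0,T;L^2)}$ suffices on the support — in any case it contributes two of the four units. For the second term, $\big|\tfrac 1h\int_t^{t+h}\varphi'(s)K(s)\,\mathrm{d}s\big| \le \operatorname{Lip}(\varphi)\,\tfrac 1h\int_t^{t+h}|K(s)|\,\mathrm{d}s$ since $\|\varphi'(s)\|_{L^\infty(\Omega_0)}\le\operatorname{Lip}(\varphi)$ a.e.; this is $\operatorname{Lip}(\varphi)$ times a Steklov average of $|K|$, and so its $L^2(0,T-h;L^2(\Omega_0))$-norm is at most $\operatorname{Lip}(\varphi)\norm{K}{L^2(0,T;L^2(\Omega_0))}\le \operatorname{Lip}(\varphi)\sqrt{T}\,\norm{k}{L^2(0,T;L^2)}$ by the contraction property of Steklov averaging. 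Then Cauchy--Schwarz against $g$ gives the same shape of bound, contributing the other two units; summing the two contributions gives the stated factor $4$.

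The only genuine subtlety — and what I expect to be the main point requiring care rather than difficulty — is the membership statement $\mathcal{L}_h\colon L^2\times X\times L^2 \to L^1(0,T;L^1(\Omega_0))$: one must check that the product $g(t)D_h\varphi(t)K(t)$ is actually measurable and integrable, which follows since $D_h\varphi \in L^\infty((0,T)\times\Omega_0)$ on the relevant subinterval, $K$ is continuous into $L^2(\Omega_0)$, and $g\in L^2$; and that $\varphi'$ exists a.e. with $\|\varphi'(s)\|_{L^\infty}\le\operatorname{Lip}(\varphi)$, which is exactly the reason the space $X$ was defined to include the a.e.-differentiability constraint (the remark about $L^\infty(\Omega_0)$ failing the Radon--Nikodym property is precisely what makes this non-automatic). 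Once those measurability points are in place the estimate itself is just two applications of Cauchy--Schwarz and the Steklov contraction bound, so I would state those carefully and then the displayed inequality follows by collecting terms.
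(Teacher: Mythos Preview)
Your approach is correct and uses the same ingredients as the paper: the pointwise bounds $|D_h\varphi|,\,\|\varphi'\|_{L^\infty}\le\operatorname{Lip}(\varphi)$, Cauchy--Schwarz in $L^2(0,T-h;L^2(\Omega_0))$, the Steklov contraction $\|(|K|)_h\|_{L^2(0,T-h;L^2)}\le\|K\|_{L^2(0,T;L^2)}$, and the elementary estimate $\|K\|_{L^2}\le\sqrt{T}\,\|k\|_{L^2}$. The only difference is cosmetic: the paper first adds and subtracts $g(t)\varphi'(t)K(t)$ to write
\[
\mathcal L_h(g,\varphi,k)(t)=g(t)K(t)\bigl(D_h\varphi(t)-\varphi'(t)\bigr)+g(t)\Bigl(\varphi'(t)K(t)-\tfrac1h\!\int_t^{t+h}\!\varphi'(s)K(s)\,ds\Bigr),
\]
and then bounds each piece by $2\operatorname{Lip}(\varphi)\|g\|\|K\|$, whereas you bound the two original summands $gK\,D_h\varphi$ and $g\,(\varphi'K)_h$ directly. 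Your route is in fact slightly more economical (each summand costs one, not two, copies of $\operatorname{Lip}(\varphi)$), so your constant comes out as $2$ rather than $4$; your hand-waving about ``two units per term'' undersells your own argument. Your remarks on measurability and on why the a.e.-differentiability built into $X$ is needed are exactly to the point.
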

\begin{proof}
By adding and subtracting the same term in the definition of $\mathcal{L}_h$,
\begin{align*} 
\mathcal L_h(g,\varphi,k)(t) 
  &= g(t)K(t) \left(D_h\varphi(t)- \varphi'(t)\right) + g(t)\left(\varphi'(t)K(t) - \frac 1h \int_t^{t+h}\varphi'(s)K(s)\right).
\end{align*}
The assumptions on $\varphi$ mean that it is Lipschitz continuous in time with a global Lipschitz constant, and therefore $D_h\varphi(t) \leq \text{Lip}(\varphi).$ After integrating the above expression for $\mathcal{L}_h$ in space and over $t \in (0,T-h)$, the first term on the right hand side is
\begin{align*} 
\int_0^{T-h}\int_{\Omega_0}g(t)K(t) \left(D_h\varphi(t)- \varphi'(t)\right)
 &\leq 2\operatorname{Lip}(\varphi)\norm{g}{L^2(0,T-h;L^2(\Omega_0))}\norm{K}{L^2(0,T-h;L^2(\Omega_0))}
\end{align*}
and the second term can be dealt with as follows:
\begin{align*}
\int_0^{T-h}\int_{\Omega_0}g(t)\left(\varphi'(t)K(t) - \frac 1h \int_t^{t+h}\varphi'(s)K(s)\right) &\leq \int_0^{T-h}\int_{\Omega_0}|g(t)|\norm{\varphi'}{\infty}\left(|K(t)|   + \frac 1h \int_t^{t+h}|K(s)|\right)\\
&= \operatorname{Lip(\varphi)}\int_0^{T-h}\int_{\Gamma_0}|g(t)||K(t)| + |g(t)|(|K(t)|)_h\\
&\leq 2\operatorname{Lip(\varphi)}\norm{g}{L^2(0,T-h;L^2(\Omega_0))}\norm{K}{L^2(0,T;L^2(\Omega_0))}.
\end{align*}
The claim follows once we note that
\[\norm{K}{L^2(0,T-h;L^2(\Omega_0))}^2 = \int_0^{T-h}\int_{\Omega_0} \left|\int_0^ t k(s)\right|^2 \leq \int_0^{T-h}\int_{\Omega_0} \int_0^ t t|k(s)|^2 \leq (T-h)\norm{k}{L^2(0,T-h;L^2(\Omega_0))}^2.\]
\end{proof}

\subsection{Obtaining the bound}

As we mentioned before, we want to establish a bound on $u_h'$ in $L^2(0,T;L^2(\Omega_0))$ and to do so we will integrate \eqref{eq:pbWeakForm} in time for a specific test function $\psi$. Let $\xi \in C^\infty_c((0,T])$ be a smooth function vanishing near $t=0$ and equal to one near $t=T$ with $0 \leq \xi \leq 1$ and pick $\psi = \xi(t)\partial_t u_h(t)$ in \eqref{eq:pbWeakForm} (this cutoff function is necessary to deal with the Laplacian term: we cannot bound $\nabla u_h(0)$ in $L^2(\Omega_0)$ independent of $h$). Our aim is to prove the following lemma, which we will do by addressing each term in the next subsections.
\begin{lem}\label{lem:ssInequalities}
With $\psi = \xi(t)\partial_t u_h(t)$, the following lower bound
\begin{align}
 \frac 1h\int_0^{T-h}\int_{\Omega_0}\int_t^{t+h} \mathbf{B}(s)\grad u \grad \psi &\geq (\lambda_T-(d+1)\rho)\norm{\grad u_h(T-h)}{L^2(\Omega_0)}^2 - C_\rho\label{eq:step1}
 \end{align}
 and the following upper bounds
 \begin{align}
\frac 1h\int_0^{T-h} \int_{\Omega_0}\int_t^{t+h}\psi \mathbf{A}(s)\grad  u \grad J_s^{-1} +\widetilde{\mathbf{J}_\Omega} \cdot \mathbf{M}(s)^T \grad u \psi + u \psi  V_0 &\leq \epsilon\lVert{\sqrt{\xi}u_h'}\rVert_{L^2(0,T-h;L^2(\Omega_0))}^2 + C_\epsilon\label{eq:step2}\\
\frac 1h\int_0^{T-h}\int_{\Gamma_0}\int_t^{t+h}( z - u w+ j_0u)\psi \omega_s &\leq C\label{eq:step3}
\end{align}
hold, where $\lambda_T >0$ is as in \eqref{eq:ss1}, $\rho$, $\epsilon > 0$ can be chosen arbitrarily and all constants on the right hand sides are independent of $h$.
\end{lem}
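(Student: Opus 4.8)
The plan is to insert the test function $\psi=\xi(t)\,\partial_t u_h(t)$ into the Steklov-averaged pulled-back weak form \eqref{eq:pbWeakForm}, integrate over $t\in(0,T-h)$, and estimate the resulting terms group by group. The recurring ingredients are Corollary \ref{coro:matrix}, the bound of Lemma \ref{lem:boundednessOfL} on $\mathcal L_h$, repeated integration by parts in time, the contraction and approximation properties of the Steklov average recalled above, and — decisively for the boundary term — the $L^\infty$ bounds on $u$, $w$, $z$ from Theorems \ref{thm:existence} and \ref{thm:LinftyBounds} together with $\dot w,\dot z\in L^2_{L^2(\Gamma)}$. The time regularity of the geometric quantities $J_t,\mathbf M(t),\mathbf A(t),\mathbf B(t),\omega_t,j_0$ (from the $C^2$-regularity of the flow $\Phi$; in particular $\mathbf B(t)=\delta_\Omega\mathbf M(t)\mathbf M(t)^T$ is symmetric) is used freely throughout.

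For the principal term in \eqref{eq:step1}, I would apply Corollary \ref{coro:matrix} with $\mathbf N=\mathbf B$, $f=u$ and the frozen function $\psi$, replacing $\tfrac1h\int_t^{t+h}\mathbf B(s)\grad u(s)\,\mathrm ds$ by $\mathbf B(t+h)\grad u_h(t)$ plus a sum of $\mathcal L_h$-terms. For the leading part, symmetry of $\mathbf B$ and $\grad\psi=\xi(t)\,\partial_t\grad u_h(t)$ give $\mathbf B(t+h)\grad u_h\cdot\partial_t\grad u_h=\tfrac12\partial_t\big(\mathbf B(t+h)\grad u_h\cdot\grad u_h\big)-\tfrac12\mathbf B'(t+h)\grad u_h\cdot\grad u_h$; multiplying by $\xi$, integrating over $(0,T-h)$ and integrating by parts in time (using $\xi(0)=0$ and $\xi(T-h)=1$ for $h$ small) isolates $\tfrac12\mathbf B(T)\grad u_h(T-h)\cdot\grad u_h(T-h)\geq\lambda_T\norm{\grad u_h(T-h)}{L^2(\Omega_0)}^2$ plus remainders bounded by $C(\norm{\mathbf B'}{\infty})\norm{\grad u_h}{L^2(0,T-h;L^2(\Omega_0))}^2\leq C\norm{u}{L^2_{H^1(\Omega)}}^2$ via Steklov contraction. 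In each $\mathcal L_h$-term the first slot holds $\psi_{x_j}=\xi(t)\,\partial_{x_j}\partial_t u_h(t)$, so I would integrate by parts once more in time to move $\partial_t$ off $u_h$; then boundedness of $\mathbf B'$ and $\mathbf B''$ and the estimate $\norm{\int_0^\cdot u_{x_i}}{L^2(0,T;L^2(\Omega_0))}\leq\sqrt T\,\norm{u}{L^2_{H^1(\Omega)}}$ (as in the last display of the proof of Lemma \ref{lem:boundednessOfL}) control the interior integrals by a constant, while the lone surviving boundary contribution is $\leq C\norm{\grad u_h(T-h)}{L^2(\Omega_0)}$ per coordinate direction, hence $\leq(d+1)\rho\norm{\grad u_h(T-h)}{L^2(\Omega_0)}^2+C_\rho$ by Young. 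This yields \eqref{eq:step1}. For the lower-order bulk terms \eqref{eq:step2}, after freezing $\psi$ each of the three integrands takes the form $\xi(t)\,G_h(t)\,\partial_t u_h(t)$ with $G\in\{\mathbf A\grad u\cdot\grad J^{-1},\ \widetilde{\mathbf J_\Omega}\cdot\mathbf M^{T}\grad u,\ uV_0\}$; since the coefficients are bounded in space-time, $\norm{G}{L^2_{L^2(\Omega_0)}}\leq C\norm{u}{L^2_{H^1(\Omega)}}$ and hence $\norm{G_h}{L^2(0,T-h;L^2(\Omega_0))}\leq C$, so writing $\xi G_h\,\partial_t u_h=(\sqrt\xi\,G_h)(\sqrt\xi\,\partial_t u_h)$ and using Young's inequality produces the bound $\epsilon\lVert\sqrt\xi\,u_h'\rVert_{L^2(0,T-h;L^2(\Omega_0))}^2+C_\epsilon$ of \eqref{eq:step2}; here no time integration by parts is needed.

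\textbf{The boundary term \eqref{eq:step3}.} Write $\Phi:=(z-uw+j_0u)\,\omega$ (the constants $\delta_k^{-1},\delta_{k'}^{-1}$ being harmless), so the term equals $\int_0^{T-h}\int_{\Gamma_0}\xi(t)\,\Phi_h(t)\,\partial_t u_h(t)$; the $L^\infty$ bounds on $u,w,z$ make $\Phi$, hence $\Phi_h$, bounded in $L^\infty_{L^\infty(\Gamma_0)}$, and so are the traces of $u_h$. The plan is to integrate by parts in time: the $t=0$ contribution vanishes since $\xi(0)=0$, the $t=T-h$ contribution is bounded by the $L^\infty$ bounds, leaving $-\int_0^{T-h}\int_{\Gamma_0}\big(\xi'\Phi_h+\xi\,D_h\Phi\big)u_h$. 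The factor $\xi'\Phi_h u_h$ is bounded; expanding $D_h\Phi$ by $D_h(fg)(t)=D_hf(t)\,g(t+h)+f(t)\,D_hg(t)$, every contribution in which $D_h$ falls on $z$, $w$, $\omega$ or $j_0$ is under control, because $\dot z,\dot w\in L^2_{L^2(\Gamma)}$ and $\omega,j_0$ are smooth in time so those $D_h(\cdot)$ are bounded in $L^2_{L^2(\Gamma_0)}$ and they multiply $L^\infty$ factors; the single remaining piece $(D_hu)(t)\,\big(-w(t+h)+j_0(t+h)\big)\omega_{t+h}\,u_h(t)$ equals $\tfrac12\int\!\!\int\xi\big(-w(t+h)+j_0(t+h)\big)\omega_{t+h}\,\partial_t(u_h^2)$ since $D_hu=\partial_t u_h$, and one further integration by parts in time reduces it to a constant using $\xi(0)=0$, $\norm{u_h}{L^\infty}\leq\norm{u}{L^\infty_{L^\infty(\Omega)}}$ and $\dot w\in L^2_{L^2(\Gamma)}$. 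This gives \eqref{eq:step3}.

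The chief obstacle is \eqref{eq:step3}: the Robin coupling forces a trace of $\partial_t u_h$ that cannot be controlled a priori, so neither absorption into the dissipation nor a crude trace inequality works, and the way around it is precisely the double integration by parts in time described above — this is exactly where the $L^\infty$ bound on $u$ from Theorem \ref{thm:LinftyBounds} is indispensable, which is the structural reason the strong-solvability result is conditioned on that bound. A secondary technical burden is the bookkeeping of the $\mathcal L_h$ error terms in the principal part, which requires two time derivatives of the geometric coefficients and the Cauchy--Schwarz estimate on $\int_0^\cdot u_{x_i}$ in order to keep every constant independent of $h$.
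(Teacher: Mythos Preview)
Your proposal is correct and follows essentially the same strategy as the paper: the handling of \eqref{eq:step1} via Corollary~\ref{coro:matrix}, the commutation $(\partial_t u_h)_{x_j}=\partial_t(u_{x_j})_h$, and a further time integration by parts is exactly the paper's route (its Steps A--C), and your treatment of \eqref{eq:step3} by a first integration by parts, the discrete product rule for $D_h\Phi$, and the identity $u_h\,\partial_t u_h=\tfrac12\partial_t(u_h^2)$ followed by a second integration by parts is precisely the paper's ``chain-rule trick''. The only notable difference is \eqref{eq:step2}, where you bypass the $\mathcal L_h$ machinery and treat each term directly as $\xi G_h\,\partial_t u_h$ with $G\in L^2_{L^2}$; this is the shortcut the paper itself signals is available (``we can be a bit more crude\ldots'') and it works for the reason you give.
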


\subsubsection{The Laplacian term}
Clearly, the most troublesome term in \eqref{eq:pbWeakForm} is the one involving the gradient of the test function $\psi = \xi \partial_t u_h$. We must manipulate it in such a way as to extract a positive contribution of $\int_{\Omega_0}|\nabla u_h(T-h)|^2$ (a term that we cannot bound from above since it would require pointwise control on the gradient of $u$). This is trivial when $B$ is independent of time but in our case the extra terms arising from the time dependency generate unwanted \emph{negative} contributions of (the square root of) the above integral, which we will overcome by Young's inequality. Let us begin by using Corollary \ref{coro:matrix} to write
\begin{align*}
\nonumber \frac 1h\int_{\Omega_0}\int_t^{t+h} \mathbf{B}(s)\grad u \grad \psi &= 
\int_{\Omega_0} \xi(t) \mathbf{B}(t+h)\grad u_h \grad \partial_t u_h(t) \\
\nonumber &\quad + \sum_{ij}\int_{\Omega_0}\xi(t)(\partial_t u_h(t))_{x_j}\left(D_hb_{ij}(t)\int_0^t u_{x_i}(s) - \frac 1h\int_t^{t+h}b_{ij}'(s)\int_0^s u_{x_i}(r)\right)\\
\nonumber &= \frac{1}{2}\frac{d}{dt}\int_{\Omega_0}\xi(t) \mathbf{B}(t+h)\grad u_h(t) \grad u_h(t)\\
\nonumber &\quad - \frac 12 \int_{\Omega_0}(\xi'(t)\mathbf{B}(t+h)+\xi(t)B'(t+h))\grad u_h(t) \grad u_h(t)\\
&\quad + \sum_{ij}\underbrace{\int_{\Omega_0}\xi(t)(\partial_t u_h(t))_{x_j}\left(D_hb_{ij}(t)\int_0^t u_{x_i}(s) - \frac 1h\int_t^{t+h}b_{ij}'(s)\int_0^s u_{x_i}(r)\right)}_{=:I_{ij}}
\end{align*}
Above, we wrote the $\mathcal{L}_h$ term out explicitly as Lemma \ref{lem:boundednessOfL} does not produce a useful estimate in this instance and it requires some preparation. Before that, consider the first term on the right hand side. We have, since $\xi$ vanishes near zero and equals one near $T$ and $\mathbf{B}(T)$ is a positive-definite matrix,
\begin{align}
    \nonumber \frac{1}{2}\int_0^{T-h}\frac{d}{dt}\int_{\Omega_0}\xi(t) \mathbf{B}(t+h)\grad u_h(t) \grad u_h(t) & = \frac{1}{2}\int_{\Omega_0}\xi(T-h) \mathbf{B}(T)\grad u_h(T-h) \grad u_h(T-h)\\
    & \geq \lambda_T \int_{\Omega_0}|\nabla u_h(T-h)|^2,\label{eq:ss1}
\end{align}
where the constant $\lambda_T$ is independent of $x$. This term, which is a positive contribution of the gradient evaluated at a point, will be used to absorb negative terms that arise below. Regarding the second term on the right hand side, it is straighforward to bound
\begin{equation}\label{eq:ss2}
\frac 12 \int_0^{T-h}\int_{\Omega_0}(\xi'(t)\mathbf{B}(t+h)+\xi(t)B'(t+h))\grad u_h(t) \grad u_h(t)\leq C_1\norm{\nabla u_h}{L^2(0,T;L^2(\Omega_0))}^2.
\end{equation}
This leaves us with the integral $I_{ij}$ which in its current form is not helpful since we cannot bound the term $(\partial_t u_h(t))_{x_j}$. However, we can use the convenient properties of the Steklov averaging allowing us to swap the order of spatial and temporal derivatives:
\[(\partial_t u_h(t))_{x_j} = \left(\frac{u(t+h)-u(t)}{h}\right)_{x_j} = \frac{u_{x_j}(t+h)-u_{x_j}(t)}{h} = \partial_t ((u_{x_j})_h(t)),\]
thus we may integrate by parts in time to yield
\begin{align}
\nonumber I_{ij} &= \frac{d}{dt} \int_{\Omega_0}\xi(t)(u_{x_j})_h(t)\left(D_hb_{ij}(t)\int_0^t u_{x_i}(s) - \frac 1h\int_t^{t+h}b_{ij}'(s)\int_0^s u_{x_i}(r)\right)\\
\nonumber &- \int_{\Omega_0}\xi(t)(u_{x_j})_h(t)\frac{d}{dt}\left(D_hb_{ij}(t)\int_0^t u_{x_i}(s) - \frac 1h\int_t^{t+h}b_{ij}'(s)\int_0^s u_{x_i}(r)\right)\\
&- \int_{\Omega_0}\xi'(t)(u_{x_j})_h(t)\left(D_hb_{ij}(t)\int_0^t u_{x_i}(s) - \frac 1h\int_t^{t+h}b_{ij}'(s)\int_0^s u_{x_i}(r)\right)\label{eq:bigline}.
\end{align}
Expanding the brackets above leaves us with six terms to control. We do this in a few steps.

\noindent\textsc{Step A} (\emph{the first, third and fifth term}). After expanding the brackets, we see that the sum of the first term and the third term is
\begin{align*}
&\frac{d}{dt} \int_{\Omega_0}\xi(t)(u_{x_j})_h(t)D_hb_{ij}(t)\int_0^t u_{x_i}(s)   - \int_{\Omega_0}\xi(t)(u_{x_j})_h(t)\frac{d}{dt}\left(D_hb_{ij}(t)\int_0^t u_{x_i}(s)\right)\\
&=\frac{d}{dt} \int_{\Omega_0}\xi(t)(u_{x_j})_h(t)D_hb_{ij}(t)\int_0^t u_{x_i}(s)   - \int_{\Omega_0}\xi(t)(u_{x_j})_h(t)\left(D_hb_{ij}'(t)\int_0^t u_{x_i}(s) + D_hb_{ij}(t)u_{x_i}(t)\right).
\end{align*}
Let us integrate in time and handle each of these terms on the right hand side.

\noindent \textsc{Step A.1.} Since $\xi$ vanishes near zero, we have
\begin{align*}
\int_0^{T-h}\frac{d}{dt} \int_{\Omega_0}\xi(t)(u_{x_j})_h(t)D_hb_{ij}(t)\int_0^t u_{x_i}(s)
&=  \int_{\Omega_0}\xi(T-h)(u_{x_j})_h(T-h)D_hb_{ij}(T-h)\int_0^{T-h} u_{x_i}(s)\\
&\leq  \norm{b_{ij}}{\text{Lip}}\int_{\Omega_0}|(u_{x_j})_h(T-h)|\int_0^{T-h} |u_{x_i}(s)|\\
&\leq C_1\norm{(u_{x_j})_h(T-h)}{L^2(\Omega_0)}\int_0^{T-h}\norm{u_{x_i}(s)}{L^2(\Omega_0)}\\
&= C_1\norm{(u_{x_j})_h(T-h)}{L^2(\Omega_0)}\norm{u_{x_i}}{L^1(0,T-h;L^2(\Omega_0))}\\
&\leq \frac{\rho}{2}\norm{(u_{h})_{x_j}(T-h)}{L^2(\Omega_0)}^2 + C_\rho\norm{u_{x_i}}{L^1(0,T;L^2(\Omega_0))}^2
\end{align*}
since derivatives and the Steklov averaging commute.

\noindent \textsc{Step A.2.} We also have
\begin{align*}
\int_0^{T-h}\int_{\Omega_0}\xi(t)(u_{x_j})_h(t)D_hb_{ij}'(t)\int_0^t u_{x_i}(s) &\leq \norm{b_{ij}'}{\text{Lip}}\int_0^{T-h}\int_{\Omega_0}|(u_{x_j})_h(t)|\int_0^t |u_{x_i}(s)|\\
&\leq C_1\int_0^{T-h}\int_0^t \int_{\Omega_0}|(u_{x_j})_h(t)||u_{x_i}(s)|\\
&\leq C_2\norm{u_{x_i}}{L^1(0,T;L^2(\Omega_0))}\int_0^{T-h}\norm{(u_{x_j})_h(t)}{L^2(\Omega_0)}\\
&\leq C_3\norm{u_{x_i}}{L^1(0,T;L^2(\Omega_0))}\norm{u_{x_j}}{L^1(0,T-h;L^2(\Omega_0))}.
\end{align*}
The fifth term in \eqref{eq:bigline} can be dealt with in exactly the same manner.

\noindent \textsc{Step A.3.} Finally, we have
\begin{align*}
\int_0^{T-h}\int_{\Omega_0}\xi(t)(u_{x_j})_h(t)D_hb_{ij}(t)u_{x_i}(t) &\leq \norm{b_{ij}(t)}{\text{Lip}}\int_0^{T-h}\int_{\Omega_0}|(u_{x_j})_h(t)||u_{x_i}(t)|\\
&\leq C_1\norm{u_{x_j}}{L^2(0,T-h;L^2(\Omega_0))}\norm{u_{x_i}}{L^2(0,T;L^2(\Omega_0))}.
\end{align*}
\textsc{Step B} (\emph{The second and fourth terms}). Now let us handle (the negative of) the second and fourth terms in \eqref{eq:bigline}.

\noindent\textsc{Step B.1.} We have
\begin{align*}
\int_0^{T-h}\frac{d}{dt} \int_{\Omega_0}\xi(t)(u_{x_j})_h(t)\left(\frac 1h\int_t^{t+h}b_{ij}'(s)\int_0^s u_{x_i}(r)\right)&= \int_{\Omega_0}\xi(T-h)(u_{x_j})_h(T-h)\left(\frac 1h\int_{T-h}^{T}b_{ij}'(s)\int_0^s u_{x_i}(r)\right)\\ 
&\leq \frac{\norm{b'_{ij}}{\infty}}{h}\int_{T-h}^T\int_0^s\int_{\Omega_0}|(u_{x_j})_h(T-h)||u_{x_i}(r)|\\
&\leq \frac{C_1}{h} \norm{(u_{x_j})_h(T-h)}{L^2(\Omega_0)}\int_{T-h}^T \norm{u_{x_i}}{L^1(0,T;L^2(\Omega_0))}\\
&= C_1 \norm{(u_{x_j})_h(T-h)}{L^2(\Omega_0)} \norm{u_{x_i}}{L^1(0,T;L^2(\Omega_0))}\\
&\leq \frac{\rho}{2}\norm{(u_{x_j})_h(T-h)}{L^2(\Omega_0)}^2  + C_\rho
\end{align*}
\textsc{Step B.2.} Since
\begin{align*}
\frac{d}{dt}\left(\frac 1h\int_t^{t+h}b_{ij}'(s)\int_0^s u_{x_i}(r)\right) &=\frac 1h\left(b_{ij}'(t+h)\int_0^{t+h} u_{x_i} - b_{ij}'(t)\int_0^t u_{x_i}(r)\right),
\end{align*}
we find the fourth term to be
\begin{align*}
&\int_0^{T-h}\int_{\Omega_0}\xi(t)(u_{x_j})_h(t)\frac 1h\left(b_{ij}'(t+h)\int_0^{t+h} u_{x_i} - b_{ij}'(t)\int_0^t u_{x_i}(r)\right)\\
&= \frac 1h\int_0^{T-h}\int_{\Omega_0}\xi(t)(u_{x_j})_h(t)\left(b_{ij}'(t+h)\left[\int_0^{t+h} u_{x_i}-\int_0^t u_{x_i}\right] + \left[b_{ij}'(t+h)- b_{ij}'(t)\right]\int_0^t u_{x_i}(r)\right)\\
&= \int_0^{T-h}\int_{\Omega_0}\xi(t)(u_{x_j})_h(t)b_{ij}'(t+h)\frac 1h\int_t^{t+h} u_{x_i}(r) + \frac 1h\int_0^{T-h}\int_0^t\int_{\Omega_0}\xi(t)(u_{x_j})_h(t)\left[b_{ij}'(t+h)- b_{ij}'(t)\right] u_{x_i}(r)\\
&\leq \norm{b_{ij}'}{\infty}\int_0^{T-h}\int_{\Omega_0}|(u_{x_j})_h(t)||(u_{x_i})_h(t)| + \norm{b'_{ij}}{\text{Lip}}\int_0^{T-h}\int_0^t\int_{\Omega_0}|(u_{x_j})_h(t)| |u_{x_i}(r)|\\
&\leq C_1\norm{(u_{x_j})_h}{L^2(0,T-h;L^2(\Omega_0))}\norm{(u_{x_i})_h}{L^2(0,T-h;L^2(\Omega_0))} + C_2\norm{(u_{x_j})_h}{L^1(0,T-h;L^2(\Omega_0))}\norm{u_{x_i}}{L^1(0,T;L^2(\Omega_0))}
\end{align*}
\textsc{Step C} (\emph{The final term}). It remains for us to bound the last term in \eqref{eq:bigline}:
\begin{align*}
\int_0^{T-h}\int_{\Omega_0}\xi'(t)(u_{x_j})_h(t)\left(\frac 1h\int_t^{t+h}b_{ij}'(s)\int_0^s u_{x_i}(r)\right) &= \int_0^{T-h}\int_{\Omega_0}\xi'(t)(u_{x_j})_h(t)[b_{ij}'U_{x_i}]_h(t)\\
&\leq \norm{\xi'}{\infty}\norm{(u_{x_j})_h}{L^2(0,T;L^2(\Omega_0))}\norm{[b_{ij}'U_{x_i}]_h}{L^2(0,T;L^2(\Omega_0))}\\
&\leq \norm{\xi'}{\infty}\norm{b_{ij}'}{\infty}\norm{u_{x_j}}{L^2(0,T;L^2(\Omega_0))}\norm{u_{x_i}}{L^2(0,T;L^2(\Omega_0))}
\end{align*}
where we defined $U_{x_i}(t) = \int_0^t u_{x_i}$.

\noindent \textsc{Conclusion.} Combining all of the steps and summing \eqref{eq:bigline} over $i$ and $j$, we find 
\begin{align*}
\sum_{ij}\int_0^{T-h}I_{ij}
\leq (d+1)\rho\norm{\grad u_h(T-h)}{L^2(\Omega_0)}^2 + C_1
\end{align*}
for a constant $C_1$ not depending on $h$. 
Taking this into account with \eqref{eq:ss1} and \eqref{eq:ss2}, we end up with \eqref{eq:step1}.
\subsubsection{The remaining terms on the left hand side}
The remaining integrals over $\Omega_0$ are lower order in $\partial_t u_h$ so we can be a bit more crude in how we deal with them but since we already have the more precise machinery of Lemmas \ref{lem:basicIBP} and \ref{lem:boundednessOfL} let us take more care.
Denoting the components of $\widetilde{\mathbf{J}_\Omega}$ as $k_i$, using
\[\mathbf{A}(s)\grad  u \grad J_s^{-1} +
\widetilde{\mathbf{J}_\Omega} \cdot \mathbf{M}(s)^T \grad u  =\sum_{ij}a_{ij}(s)j_{x_j}u_{x_i} +  \sum_{ij} k_jm_{ij}u_{x_i} =\sum_{ij}\underbrace{(a_{ij}(s)j_{x_j}+k_jm_{ij})}_{=:n_{ij}}u_{x_i}\]
 we see that, using Lemma \ref{lem:basicIBP}, and the definition of $\psi$,
\begin{align*}
&\frac 1h \int_{\Omega_0}\int_t^{t+h}\psi \mathbf{A}(s)\grad  u(s) \grad J_s^{-1} + 
\widetilde{\mathbf{J}_\Omega} \cdot \mathbf{M}(s)^T \grad u(s) \psi\\ 
&= \int_{\Omega_0}\xi(t)\partial_t u_h(t) \mathbf{A}(t+h)\grad u_h(t) \grad J_{t+h}^{-1} + \sum_{ij}\mathcal L_h(t;\xi \partial_t u_h,n_{ij}, u_{xi})\\
&\leq \int_{\Omega_0}\frac{\epsilon}{4}\xi(t)|\partial_tu_h(t)|^2 + C_\epsilon |\mathbf{A}(t+h)\grad u_h(t) \grad J_{t+h}^{-1}|^2 + \sum_{ij}\mathcal L_h(t;\xi \partial_t u_h,n_{ij}, u_{xi})
\end{align*}
and
\begin{align*}
\frac 1h\int_{\Omega_0}  \int_t^{t+h}u \psi V_0 &= 
 \int_{\Omega_0}\xi(t)\partial_t u_h(t)\left(V_0(t+h) u_h(t) + D_hV_0(t)\int_0^t u(s) - \frac 1h\int_t^{t+h}V_0'(s)\int_0^s u(r)\right)\\
&\leq \int_{\Omega_0}\frac{\epsilon}{4}\xi(t)|\partial_t u_h(t)|^2 + C_\epsilon|V_0(t+h) u_h(t)|^2 + \mathcal L_h(t;\xi \partial_t u_h, V_0, u).
\end{align*}
Integrating both of these and combining, we find with the aid of Lemma \ref{lem:boundednessOfL},
\begin{align*}
\frac 1h\int_0^{T-h}  \int_{\Omega_0}&\int_t^{t+h}\psi \mathbf{A}(s)\grad  u(s) \grad J_s^{-1} + 
\widetilde{\mathbf{J}_\Omega} \cdot \mathbf{M}(s)^T \grad u(s) \psi +  u(s)\psi  V_0\nonumber \\
&\leq  \frac{\epsilon}{2}\norm{\sqrt{\xi}\partial_t u_h}{L^2(0,T-h;L^2(\Omega_0))}^2 + C_\epsilon \int_0^{T-h}\int_{\Omega_0}|\grad u_h(t)|^2|\mathbf{A}(t+h)^T\grad J_t^{-1}|^2 \\ & +C\norm{\sqrt{\xi}\partial_t u_h}{L^2(0,T-h;L^2(\Omega_0))}\norm{u_{x_i}}{L^2(0,T;L^2(\Omega_0))}\nonumber \nonumber 
C + \norm{\sqrt{\xi}\partial_t u_h}{L^2(0,T-h;L^2(\Omega_0))}\norm{u}{L^2(0,T;L^2(\Omega_0))}
\end{align*}
and this leads to \eqref{eq:step2} after using the boundedness of $|\mathbf{A}(t+h)^T\grad J_t|$ and Young's inequality with $\epsilon$.

\subsubsection{Dealing with the boundary quantities}
We have an issue with $\partial_t u_h$ on the boundary, since the trace theorem would insist on gradient control on $u_t$. The workaround is to integrate by parts and to use a trick involving the chain rule to deal with the $u\partial_t u_h$ term.

First, to ease the notation, let $\hat z = \delta_{k'}^{-1}\xi\omega z$, $\hat w = \delta_k^{-1}\xi \omega w$, $\hat{j_0}=\xi\omega j_0$ and define $f=\hat{w}-\hat{j_0}$. The right hand side of \eqref{eq:pbWeakForm} is then
\begin{align}
\nonumber \frac 1h\int_{\Gamma_0}\int_t^{t+h}\left( \frac{z}{\delta_{k'}} - \frac{u w}{\delta_k}+ j_0u\right)\psi \omega_s &=\int_{\Gamma_0}\partial_t u_h(t)\frac 1h\int_t^{t+h}\hat z - u(\hat w-\hat{j_0}) \\
\nonumber &=\int_{\Gamma_0}\partial_t u_h(t)[\hat z_h - (u f)_h] \\
&=\frac{d}{dt}\int_{\Gamma_0} \hat{z}_h u_h - \int_{\Gamma_0} \partial_t \hat{z}_h u_h - \frac{d}{dt}\int_{\Gamma_0} [u f]_h u_h + \int_{\Gamma_0} \partial_t [uf]_h u_h  \label{eq:ste1}.
\end{align}
The final term above still contains explicitly a derivative of $u$, so let us rewrite it by first using
\[\partial_t [uf]_h = \frac{u(t+h)-u(t)}{h}f(t+h) + u(t)\frac{f(t+h)-f(t)}{h} = f(t+h)\partial_t u_h  + u\partial_t f_h\]
so that
\begin{align*}
\int_{\Gamma_0} \partial_t [uf]_h u_h 
&= \frac{1}{2}\int_{\Gamma_0} f(t+h)\partial_t (u_h^2) + \int_{\Gamma_0}uu_h\partial_t f_h\\
&= \frac 12\frac{d}{dt}\int_{\Gamma_0} f(t+h)u_h^2 - \frac 12 \int_{\Gamma_0} f'(t+h)u_h^2 + \int_{\Gamma_0} uu_h\partial_t f_h.
\end{align*}
(On $\Gamma_0$ by $\partial_t u_h$ we mean $\partial_t [(u|_{\Gamma_0})_h]$ where of course $(u|_{\Gamma_0})_h(t) = \frac 1h \int_t^{t+h}u|_{\Gamma_0}$.) 
Plugging this into \eqref{eq:ste1} above and integrating over $t \in (0,T-h)$, we find the right hand side of \eqref{eq:ste1} to be
\begin{align*}
&\int_{\Gamma_0} (\hat{z}_h(T-h) u_h(T-h)- \hat{z}_h(0)u_h(0)) - \int_0^{T-h}\int_{\Gamma_0} \partial_t \hat{z}_h u_h - \int_{\Gamma_0} ([uf]_h(T-h) u_h(T-h) - [uf]_h(0)u_h(0))\\
&+ \frac 12\int_{\Gamma_0} (f(T)u_h(T-h)^2 - f(0)u_h(0)^2) - \frac 12 \int_0^{T-h}\int_{\Gamma_0} f'(t+h)u_h^2 + \int_0^{T-h}\int_{\Gamma_0} uu_h\partial_t f_h.
\end{align*}
We discuss each term in turn. Writing
\[u_h(s)^2 = \left(\frac 1h \int_s^{s+h}u\right)^2 \leq \frac 1h \int_s^{s+h}u^2\]
hence
$\int_{\Gamma_0} f(T)u_h(T-h)^2 \leq \frac 1h \int_{T-h}^T \int_{\Gamma_0} |f(T)|u^2 \leq h\norm{f(T)}{L^1(\Gamma_0)}\norm{u}{L^\infty(0,T;L^\infty(\Gamma_0))}^2$, so that the fourth integral is bounded. Similarly to this we may deal with the first integral. The third term we deal with as follows:
\begin{align*}
\int_{\Gamma_0} [uf]_h(s)u_h(s) &= \frac{1}{h^2}\int_{\Gamma_0} \left(\int_s^{s+h} u(r)f(r)\right)\left( \int_s^{s+h}u(p)\right)\\
&=\frac{1}{h^2}\int_s^{s+h} \int_s^{s+h}\int_{\Gamma_0} u(r)f(r)u(p)\\
&\leq \frac{1}{h^2}\norm{f}{L^\infty(0,T;L^\infty)}\int_{\Gamma_0} \left(\int_s^{s+h}u(r)\right)^2\\
&\leq |\Gamma|^2\norm{f}{L^\infty(0,T;L^\infty(\Gamma_0))}\norm{u}{L^\infty(0,T;L^\infty(\Gamma_0))}^2
\end{align*}
thanks to Jensen's inequality. The second and last terms become
\begin{align*}
&\int_0^{T-h}\int_{\Gamma_0} uu_h\partial_t f_h - u_h \partial_t \hat{z}_h\\
&\leq \norm{u}{L^\infty(0,T;L^\infty(\Gamma_0))}\norm{\partial_t f_h}{L^2(0,T-h;L^2(\Gamma_0))}\norm{u_h}{L^2(0,T-h;L^2(\Gamma_0))} + \norm{\partial_t \hat{z}_h}{L^2(0,T-h;L^2(\Gamma_0))}\norm{u_h}{L^2(0,T-h;L^2(\Gamma_0))}.
\end{align*}
The time derivatives of $f_h$ and $\hat{z}_h$ are bounded because the Steklov average commutes with time derivative and we know that $w_t$ and $z_t$ are bounded in $L^2(0,T;L^2(\Gamma_0))$. The same holds for $j_0$ since $\Phi$ is a $C^2$-diffeomorphism. Finally, like above,
\begin{align*}
\int_0^{T-h}\int_{\Gamma_0} f'(t+h)u_h^2 &\leq \frac 1h\int_0^{T-h}\int_{\Gamma_0} |f'(t+h)|\int_t^{t+h}u^2(r)\\
&\leq \norm{u}{L^\infty(0,T;L^\infty(\Gamma_0))}^2 \norm{f'(\cdot+h)}{L^1(0,T-h;L^1(\Gamma_0))}.
\end{align*}
\subsection{Conclusion}
Integrating \eqref{eq:pbWeakForm} (tested with $\xi u_h'$ of course) and combining \eqref{eq:step1}, \eqref{eq:step2} and \eqref{eq:step3} from Lemma \ref{lem:ssInequalities}, we find
\begin{align*}
(1-\epsilon)\int_0^{T-h}\int_{\Omega_0}\xi(t)|\partial_t u_h(t)|^2 +  (\lambda_T-(d+1)\rho)\norm{\grad u_h(T-h)}{L^2(\Omega_0)}^2   \leq  C
\end{align*}
Then if we just pick $\epsilon$ and $\rho$ sufficiently small, we obtain
\[\int_0^{T-h}\int_{\Omega_0}\xi(t)|\partial_t u_h(t)|^2 \leq C\]
independent of $h$. Since $\xi$ is compactly supported around zero, we have (for a subsequence) that $\partial_t u_h \to v$ for some $v$ in $L^2(\tau, T;L^2(\Omega_0))$ for every $\tau$. Since $u_h \to u$ in $L^2(0,T-\delta; H^1(\Omega_0))$ for every $\delta$, we conclude that on $(\tau, T - \delta)$, $u_t = v$. Therefore it follows for $\tau > 0$ that $u_t \in L^2(\tau, T; L^2(\Omega_0))$ and we have proved Theorem \ref{thm:uStrongSolution}.

\section{Exponential convergence to equilibrium}\label{sec:equil}
In this section we prove that the solution of the system \eqref{eq:modelStationary} with $\delta_\Omega=\delta_k=\delta_{k'} =1$ (repeated here for convenience)
\begin{equation*}
\begin{aligned}
 u_t - \Delta u &=0 &&\text{in }\Omega\\
\nabla u \cdot \nu &= z - uw &&\text{on }\Gamma\\
\grad u \cdot \nu &=0 &&\text{on $\partial D$}\\
w_t -\delta_\Gamma \Delta_\Gamma w&=z - uw&&\text{on }\Gamma\\
z_t-\delta_{\Gamma'}\Delta_\Gamma z&=uw-z &&\text{on }\Gamma
\end{aligned}
\end{equation*}
converges to an equilibrium $(u_\infty, w_\infty, z_\infty)$ 
consisting of the non-negative constants uniquely determined by the well-posed system
\begin{equation}\label{eq:algebraic}
\begin{aligned}
|\Omega|u_\infty + |\Gamma|z_\infty &= M_1\\
|\Gamma|(w_\infty + z_\infty) &= M_2\\
u_\infty w_\infty &= z_\infty,
\end{aligned}
\end{equation}
where $M_1$ and $M_2$ were defined in \eqref{eq:conservationMass}. 
It is instructive to give the outline for the proof of Theorem \ref{thm:equilibrium} now and leave the details to be filled in below.
\begin{proof}[Proof of Theorem \ref{thm:equilibrium}]
Recall from \S \ref{sec:mainResults} the definition of the entropy and dissipation functionals $E$ and $D$ given by \eqref{eq:entropy} and \eqref{eq:dissipation}. Theorem \ref{thm:equilibriumDandE} gives the differential inequality
\[\frac{d}{dt}(E(u,w,z) - E(u_\infty, w_\infty, z_\infty) = -D(u,w,z) \leq -K(E(u,w,z)-E(u_\infty, w_\infty, z_\infty))\]
where $K>0$, and Gronwall's inequality along with the lower bound on the relative entropy from Theorem \ref{thm:equilibriumRelativeEntropy} leads to
\begin{align*}
C\left(\norm{u(t)- u_\infty(t)}{L^1(\Omega)}^2 + \norm{w(t)-w_\infty(t)}{L^1(\Gamma)}^2 + \norm{z(t)-z_\infty(t)}{L^1(\Gamma)}^2\right) 
&\leq e^{-Kt}(E(u_0,w_0,z_0) -E(u_\infty, w_\infty, z_\infty)).
\end{align*}
\end{proof}
Let us now introduce some functional inequalities on $M=\Omega$ or $\Gamma$ that we will use in proving some of the steps outlined above. Recall the standard Poincar\'e inequality:
\[\norm{u-\bar u}{L^2(M)} \leq C_P(M)\norm{\grad_M u}{L^2(M)}.\]
We need also the \emph{logarithmic Sobolev inequality} \cite{Stroock1993}: for $\bar u \neq 0$,
\begin{equation}\label{eq:logSobolev}
\int_M u\log\left(\frac{u}{\bar u}\right) \leq 4C_L(M)\int_M |\grad_M \sqrt{u}|^2.
\end{equation}
See \cite{Desvillettes2014} where we learn that this is in fact a consequence of the Poincar\'e inequality and the Sobolev inequality \eqref{eq:sobolevInequality}. 
Another estimate we require is  a lower bound for the entropy called the \emph{Csiszar--Kullback--Pinsker inequality} \cite{Cs}: for $\bar u \neq 0$,
\begin{equation}\label{eq:CKP}
\int_M u\log \left(\frac{u}{\bar u}\right) \geq \frac{1}{2|M|\bar u}\norm{u-\bar u}{L^1(M)}^2.
\end{equation}
\subsection{The Csiszar--Kullback--Pinsker inequality}
The following theorem is a type of Csiszar--Kullback--Pinsker inequality for functions satisfying the conservation laws \eqref{eq:conservationMass}.
\begin{theorem}\label{thm:equilibriumRelativeEntropy}
Let $u\colon \Omega \to \mathbb{R}$ and $w, z \colon \Gamma \to \mathbb{R}$ be non-negative measurable functions satisfying the conservation laws \eqref{eq:conservationMass} with $M_1, M_2 >0$. There exists a constant $C$ depending on $M_1$ and $M_2$ such that
\begin{align*}
E(u,w,z) - E(u_\infty, w_\infty, z_\infty) 
\geq C\left(\norm{u- u_\infty}{L^1(\Omega)}^2 + \norm{w-w_\infty}{L^1(\Gamma)}^2 + \norm{z- z_\infty}{L^1(\Gamma)}^2\right).
\end{align*}
\end{theorem}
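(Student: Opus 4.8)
The plan is to exploit the convexity of the function $\phi(s) = s(\log s - 1)$ together with the constrained optimization structure imposed by the conservation laws \eqref{eq:conservationMass}. The key observation is that $(u_\infty, w_\infty, z_\infty)$ is precisely the minimizer of $E$ over all non-negative triples satisfying the two linear constraints: this follows because the Lagrange/Euler equations for minimizing $E$ subject to $|\Omega|\bar u + |\Gamma|\bar z = M_1$ and $|\Gamma|(\bar w + \bar z) = M_2$ read $\log u = \mu_1$, $\log w = \mu_2$, $\log z = \mu_1 + \mu_2$ for multipliers $\mu_i$ (the second constraint forces the combination), which reproduces the algebraic relation $u_\infty w_\infty = z_\infty$ in \eqref{eq:algebraic}. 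So the left-hand side $E(u,w,z) - E(u_\infty,w_\infty,z_\infty)$ is non-negative, and we want a quantitative lower bound.

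First I would reduce to the means. Write $\bar u = \frac{1}{|\Omega|}\int_\Omega u$, etc. By Jensen's inequality applied to the convex $\phi$, one has $\int_\Omega u(\log u - 1) \geq |\Omega|\,\bar u(\log \bar u - 1)$, and similarly on $\Gamma$; more precisely, the standard identity $\int_M \phi(u) - |M|\phi(\bar u) = \int_M u \log(u/\bar u)$ splits $E(u,w,z) - E(u_\infty, w_\infty, z_\infty)$ into a ``fluctuation'' part $\int_\Omega u\log(u/\bar u) + \int_\Gamma w\log(w/\bar w) + \int_\Gamma z\log(z/\bar z) \geq 0$ plus a ``mean'' part $|\Omega|\phi(\bar u) + |\Gamma|\phi(\bar w) + |\Gamma|\phi(\bar z) - (\text{same at }\infty)$. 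For the fluctuation part I would invoke the Csiszar--Kullback--Pinsker inequality \eqref{eq:CKP} on each term to get $\int_M u\log(u/\bar u) \geq c\norm{u - \bar u}{L^1(M)}^2$. For the mean part, I would show it is bounded below by $c(|\bar u - u_\infty|^2 + |\bar w - w_\infty|^2 + |\bar z - z_\infty|^2)$: this is a finite-dimensional statement about the smooth strictly convex function $(\bar u,\bar w,\bar z)\mapsto |\Omega|\phi(\bar u) + |\Gamma|\phi(\bar w) + |\Gamma|\phi(\bar z)$ restricted to the affine subspace cut out by the conservation laws, where $(u_\infty,w_\infty,z_\infty)$ is the unique critical point; a second-order Taylor expansion gives a local quadratic lower bound, and one must argue it persists globally on the (closed, but possibly unbounded) constraint set — here one uses that $\phi$ is superlinear, so $E$ is coercive and the bound cannot degenerate at infinity (a compactness/continuity argument on the set where $|\bar u - u_\infty|^2 + \cdots \geq R$ versus $\leq R$).

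Finally I would combine: $\norm{u - u_\infty}{L^1(\Omega)}^2 \leq 2\norm{u - \bar u}{L^1(\Omega)}^2 + 2|\Omega|^2|\bar u - u_\infty|^2$ and similarly for $w$, $z$, so the sum of the fluctuation lower bounds and the mean lower bound dominates $C(\norm{u-u_\infty}{L^1(\Omega)}^2 + \norm{w-w_\infty}{L^1(\Gamma)}^2 + \norm{z-z_\infty}{L^1(\Gamma)}^2)$ after adjusting constants.

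The main obstacle I anticipate is the mean part: turning the local (Taylor) strict convexity of the constrained finite-dimensional entropy into a genuine global quadratic lower bound on an unbounded affine set, while keeping the constant dependent only on $M_1, M_2$ (and the fixed geometry $|\Omega|, |\Gamma|$). One delicate point is that the constraint set allows $\bar u, \bar w, \bar z$ to approach $0$ or $\infty$, where $\phi$ and its derivatives blow up or vanish, so the naive uniform Hessian bound fails; the fix is to handle the far regime by superlinear growth of $\phi$ (which makes the difference $E - E_\infty$ grow at least linearly, hence certainly beats the quadratic on a bounded-away-from-equilibrium set after noting $L^1$ distances are themselves controlled by the masses) and the near-equilibrium regime by the Taylor estimate, splitting at a threshold chosen in terms of $M_1, M_2$.
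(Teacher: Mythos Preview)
Your proposal is correct and follows the same overall decomposition as the paper: split the relative entropy into a fluctuation part $\int_\Omega u\log(u/\bar u)+\cdots$ and a mean part, apply the Csiszar--Kullback--Pinsker inequality \eqref{eq:CKP} to the fluctuation terms, and recombine via the triangle-type inequality at the end. The one substantive difference lies in the treatment of the mean part. The paper avoids the Taylor/compactness argument you anticipate as the main obstacle by exploiting the algebraic structure of the equilibrium: using the conservation laws together with $z_\infty = u_\infty w_\infty$ (so $\log z_\infty = \log u_\infty + \log w_\infty$), one checks directly that
\[
|\Omega|(\bar u - u_\infty)\log u_\infty + |\Gamma|(\bar w - w_\infty)\log w_\infty + |\Gamma|(\bar z - z_\infty)\log z_\infty = 0,
\]
and hence the mean part equals the sum of three \emph{decoupled} scalar relative entropies $|\Omega|\bigl(\bar u\log(\bar u/u_\infty) - (\bar u - u_\infty)\bigr)$, etc. Each of these is then bounded below by the elementary inequality $x\log(x/y) - (x-y) \geq (x-y)^2/(2x+2y)$, with the denominator controlled by the mass bounds $\bar u, u_\infty \leq M_1/|\Omega|$ and so on. This gives explicit constants depending only on $M_1, M_2, |\Omega|, |\Gamma|$ in a few lines. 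Your abstract constrained-convexity argument also works (and note that the constraint set is in fact a compact line segment, so the superlinearity patch you mention for the far regime is unnecessary); the paper's route buys simplicity and explicit constants, while yours has the merit of not relying on the particular form of the equilibrium relation.
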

\begin{proof}
We follow the proof of Lemma 2.4 in \cite{Fellner2015}. 
Since $u\log u - u_\infty\log u_\infty = u\log\left(u\slash u_\infty\right) + (u-u_\infty)\log u_\infty$ and (using $z_\infty = u_\infty w_\infty$)
\begin{align*}
\int_\Omega &(u-u_\infty)\log u_\infty + \int_\Gamma(w-w_\infty)\log w_\infty + \int_\Gamma (z-z_\infty)\log z_\infty\\
&=\log u_\infty \left(\int_\Omega (u-u_\infty)+ \int_{\Gamma}(z-z_\infty)\right) + \log w_\infty \int_\Gamma(w-w_\infty + z-z_\infty)\\
&=0,
\end{align*}
we find
\begin{align*}
&E(u,w,z) - E(u_\infty, w_\infty, z_\infty)\\
&= \int_\Omega u\log \left(\frac{u}{\bar u}\right) + \int_\Gamma w\log \left(\frac{w}{\bar w}\right) + \int_\Gamma z\log\left(\frac{z}{\bar z}\right)+ |\Omega|\left(\bar u\log\left(\frac{\bar u}{u_\infty}\right) - (\bar u-u_\infty)\right)\\
&\quad + |\Gamma|\left(\bar w\log\left(\frac{\bar w}{w_\infty}\right) - (\bar w-w_\infty)\right) + |\Gamma|\left(\bar z\log\left(\frac{\bar z}{z_\infty}\right) - (\bar z-z_\infty)\right)\tag{using $u\slash u_\infty = (u\slash \bar u) \times (\bar u \slash u_\infty)$}\\
&\geq \frac{1}{2}\left(\frac{1}{M_1}\norm{u-\bar u}{L^1(\Omega)}^2 + \frac{1}{M_2}\norm{w-\bar w}{L^1(\Gamma)}^2 + \frac{1}{M_2}\norm{z-\bar z}{L^1(\Gamma)}^2\right)+ \frac{|\Omega|^2}{4M_1}(\bar u-u_\infty)^2 + \frac{|\Gamma|^2}{4M_2}(\bar w-w_\infty)^2 \\
&\quad + \frac{|\Gamma|^2}{4M_2}(\bar z-z_\infty)^2\tag{by the Csiszar--Kullback--Pinsker inequality \eqref{eq:CKP} and \eqref{eq:xyineq} below}\\
&\geq C_1\left(\norm{u-\bar u}{L^1(\Omega)}^2 + \norm{w-\bar w}{L^1(\Gamma)}^2 + \norm{z-\bar z}{L^1(\Gamma)}^2+\norm{\bar u-u_\infty}{L^1(\Omega)}^2 + \norm{\bar w-w_\infty}{L^1(\Gamma)}^2 +\norm{\bar z-z_\infty}{L^1(\Gamma)}^2\right)
\end{align*}
where $C_1:=\min\left({1}\slash {4M_1}, {1}\slash {4M_2}\right)$ and we used the inequality
\begin{equation}\label{eq:xyineq}
x\log\left(\frac xy\right)-(x-y) 
\geq \frac{(x-y)^2}{2x+2y}
\end{equation}
with $x=\bar u$ and $y=u_\infty$ 
and the bounds $\bar u, u_\infty \leq M_1\slash |\Omega|$. Finally, we obtain the result by using on the right hand side of the above calculation 
\begin{align*}
\norm{u-\bar u}{L^1(\Omega)}^2 + \norm{\bar u-u_\infty}{L^1(\Omega)}^2 &\geq \frac{1}{2}\left(\norm{u-\bar u}{L^1(\Omega)} + \norm{\bar u-u_\infty}{L^1(\Omega)}\right)^2\\
&\geq \frac{1}{2}\left(\int_\Omega \sqrt{|u-\bar u|^2 + |\bar u-u_\infty|^2}\right)^2\\
&\geq \frac{1}{2}\norm{u-u_\infty}{L^1(\Omega)}^2
\end{align*}
(since $|u-u_\infty|^2 = |u-\bar u|^2 + |\bar u - u_\infty|^2$).
\end{proof}
\subsection{Entropy entropy-dissipation estimate}
The aim now is to relate the dissipiation with the relative entropy. More precisely, we want to show that
\[D(u,w,z) \geq K(E(u,w,z) - E(u_\infty, w_\infty, z_\infty))\]
for a positive constant $K$. This will require some technical results in the form of the next two lemmas. The above inequality will then be proved in Theorem  \ref{thm:equilibriumDandE}. We denote $U_\infty = \sqrt{u_\infty}$ and similarly for $w_\infty$ and $z_\infty$. The next lemma is established along the same method as Lemma 3.1 in \cite{FellnerLaamri} so we shall omit the proof.
\begin{lem}\label{lem:equilibriumConstants}
Let $a, b, c \geq 0$ be constants such that the equalities
\begin{align*}
|\Omega|a^2 + |\Gamma|c^2 &= M_1 = |\Omega|U_\infty^2 + |\Gamma|Z_\infty^2\\
|\Gamma|(b^2 + c^2) &= M_2 = |\Gamma|(W_\infty^2 + Z_\infty^2)
\end{align*}
hold. Then
\[(a-U_\infty)^2 + (b-W_\infty)^2 + (c-Z_\infty)^2 \leq C(ab-c)^2.\]
\end{lem}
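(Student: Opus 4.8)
The plan is to exploit the two linear constraints to reduce everything to a single-variable estimate, and then invoke compactness. First I would use the mass constraints to express $a^2$ and $b^2$ (equivalently $c^2$) in terms of the remaining variable: from $|\Gamma|(b^2+c^2)=|\Gamma|(W_\infty^2+Z_\infty^2)$ we get $b^2-W_\infty^2 = Z_\infty^2 - c^2$, and from $|\Omega|a^2+|\Gamma|c^2 = |\Omega|U_\infty^2+|\Gamma|Z_\infty^2$ we get $a^2 - U_\infty^2 = \frac{|\Gamma|}{|\Omega|}(Z_\infty^2-c^2)$. Thus, writing $t := c^2 - Z_\infty^2$, all three differences of \emph{squares} are linear in $t$: $a^2-U_\infty^2 = -\frac{|\Gamma|}{|\Omega|}t$, $b^2-W_\infty^2=-t$, $c^2-Z_\infty^2 = t$. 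Since $a,b,c,U_\infty,W_\infty,Z_\infty\ge 0$, each difference of square roots is controlled by the corresponding difference of squares via $|x-y|\le |x^2-y^2|/(x+y)$ — but this degenerates if $U_\infty$ (or $W_\infty$, $Z_\infty$) is zero, so I would first dispose of that: the equilibrium constants are strictly positive by the well-posedness of \eqref{eq:algebraic}, hence there is $\eta>0$ with $U_\infty,W_\infty,Z_\infty\ge\eta$, and consequently $(a-U_\infty)^2+(b-W_\infty)^2+(c-Z_\infty)^2 \le C_\eta\, t^2$ for a constant $C_\eta$ depending only on $\eta$ and $|\Omega|,|\Gamma|$.

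It therefore remains to bound $t^2$ by $C(ab-c)^2$. The natural approach is a compactness/contradiction argument. Suppose not: then there are sequences $a_n,b_n,c_n\ge 0$ satisfying the two constraints with $|a_nb_n-c_n|^2 = o(t_n^2)$, where $t_n = c_n^2-Z_\infty^2$. The constraints force $a_n,b_n,c_n$ to lie in a bounded set (indeed $|\Omega|a_n^2\le M_1$, $|\Gamma|b_n^2\le M_2$, $|\Gamma|c_n^2\le\min(M_1,M_2)$), so after passing to a subsequence $a_n\to a_*$, $b_n\to b_*$, $c_n\to c_*$ with the same constraints holding in the limit; also $t_n\to t_* = c_*^2 - Z_\infty^2$. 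If $t_*\neq 0$ then $a_nb_n-c_n\to 0$ forces $a_*b_* = c_*$, so $(a_*,b_*,c_*)$ solves the full system \eqref{eq:algebraic}, whence by uniqueness $a_*=U_\infty$, $b_*=W_\infty$, $c_*=Z_\infty$, contradicting $t_*\neq 0$. If $t_*=0$ one must work slightly harder, rescaling: set $s_n := \sqrt{t_n^2 + (a_nb_n-c_n)^2}$ (which $\to 0$), normalize, and extract a nonzero limiting "direction"; the linear relations among the squared differences together with the linearization of the constraint $ab=c$ around the (unique, nondegenerate) equilibrium will again give a contradiction — this is essentially the statement that the Jacobian of the map $(a,b,c)\mapsto(|\Omega|a^2+|\Gamma|c^2,\ |\Gamma|(b^2+c^2),\ ab-c)$ at $(U_\infty,W_\infty,Z_\infty)$ is invertible. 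Alternatively, and more cleanly, one can substitute the explicit linear formulas $a^2 = U_\infty^2 - \frac{|\Gamma|}{|\Omega|}t$, $b^2 = W_\infty^2 - t$, $c^2 = Z_\infty^2 + t$ into $ab-c$, obtaining a single scalar function $F(t) := \sqrt{U_\infty^2 - \frac{|\Gamma|}{|\Omega|}t}\,\sqrt{W_\infty^2-t} - \sqrt{Z_\infty^2+t}$ defined on an interval of admissible $t$ containing $0$, with $F(0)=0$ (since $U_\infty W_\infty = Z_\infty$) and $F'(0) = -\frac{|\Gamma|}{2|\Omega|}\frac{W_\infty}{U_\infty} - \frac{1}{2}\frac{U_\infty}{W_\infty} - \frac{1}{2Z_\infty} < 0$; since $F$ is smooth on the compact admissible interval and $F'(0)<0$, a standard argument gives $|F(t)|\ge c|t|$ for all admissible $t$ (handling the possibility of other zeros of $F$ by noting that $F$ is strictly decreasing, as $F'<0$ throughout — each term is monotone in $t$). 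Then $(ab-c)^2 = F(t)^2 \ge c^2 t^2$, which combined with the first step yields the claim.

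The main obstacle is the degenerate regime $t \to 0$: proving the clean \emph{linear} lower bound $|ab-c|\gtrsim |t|$ rather than something weaker, which hinges on the nondegeneracy of the equilibrium (equivalently, $F'(0)\neq 0$, or invertibility of the relevant Jacobian). I expect the slickest route is the explicit scalar reduction via $F$, since then the whole lemma collapses to elementary one-variable calculus (monotonicity of $F$ and compactness of the admissible $t$-interval) together with the positivity of $U_\infty,W_\infty,Z_\infty$ guaranteed by the well-posedness of \eqref{eq:algebraic}; this is presumably exactly the computation in Lemma 3.1 of \cite{FellnerLaamri} that the authors are referencing.
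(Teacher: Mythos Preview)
Your proposal is correct and your ``explicit scalar reduction via $F$'' route is essentially the same method as the paper's (which defers to \cite{FellnerLaamri}): both reduce to a single parameter using the two conservation constraints, then bound the ratio of the two sides using the strict positivity (nondegeneracy) of the equilibrium $(U_\infty,W_\infty,Z_\infty)$. The only cosmetic difference is the choice of parameter --- you use $t=c^2-Z_\infty^2$ and the monotone function $F(t)$, whereas the Fellner--Laamri argument parametrises by $\mu_3=c/Z_\infty-1$ and writes $\mu_1=-R_1(\mu_3)\mu_3$, $\mu_2=-R_2(\mu_3)\mu_3$ for explicit monotone functions $R_i$; since $t=Z_\infty^2\mu_3(2+\mu_3)$ these are equivalent, and your presentation via the strictly decreasing $F$ is arguably the cleaner packaging of the same calculation.
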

The next lemma is a version of Lemma \ref{lem:equilibriumConstants} for functions.
\begin{lem}\label{lem:equilibriumFunctions}
Let $A \colon \Omega \to \mathbb{R}$ and $B, C\colon \Gamma \to \mathbb{R}$ be non-negative measurable functions with $A \in H^1(\Omega)$ satisfying the conservation laws
\begin{align*}
|\Omega|\bar{A^2} + |\Gamma|\bar{C^2} &= M_1 = |\Omega|U_\infty^2 + |\Gamma|Z_\infty^2\\
|\Gamma|(\bar{B^2} + \bar{C^2}) &= M_2 = |\Gamma|(W_\infty^2 + Z_\infty^2).
\end{align*}
Then there exist constants $L^1$, $L^2$ and $L^3$ such that
\begin{align*}
&\norm{A-U_\infty}{L^2(\Omega)}^2 + \norm{B-W_\infty}{L^2(\Gamma)}^2 + \norm{C-Z_\infty}{L^2(\Gamma)}^2\\
&\leq L_1\norm{AB-C}{L^2(\Gamma)}^2+ L_2(\rho)\left(\norm{A-\bar A}{L^2(\Omega)}^2 + \norm{B-\bar B}{L^2(\Gamma)}^2 + \norm{C-\bar C}{L^2(\Gamma)}^2\right) + \rho\norm{\grad A}{L^2(\Omega)}^2
\end{align*}
for $\rho > 0$ arbitrarily small.
\end{lem}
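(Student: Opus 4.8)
The plan is to deduce this functional inequality from its scalar counterpart, Lemma~\ref{lem:equilibriumConstants}, paying for the discrepancy between a function and its average with variance terms and a small multiple of $\norm{\grad A}{L^2(\Omega)}^2$ coming from a trace estimate. First I would reduce to the averages: since $\norm{A - U_\infty}{L^2(\Omega)}^2 \le 2\norm{A - \bar A}{L^2(\Omega)}^2 + 2|\Omega|(\bar A - U_\infty)^2$ and similarly for $B$ and $C$, and the variance terms $\norm{A-\bar A}{L^2(\Omega)}^2$, $\norm{B-\bar B}{L^2(\Gamma)}^2$, $\norm{C-\bar C}{L^2(\Gamma)}^2$ are already allowed on the right-hand side, it suffices to bound $|\Omega|(\bar A - U_\infty)^2 + |\Gamma|(\bar B - W_\infty)^2 + |\Gamma|(\bar C - Z_\infty)^2$. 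To feed this into Lemma~\ref{lem:equilibriumConstants} I would work with the constants $a := \sqrt{\bar{A^2}}$, $b := \sqrt{\bar{B^2}}$, $c := \sqrt{\bar{C^2}}$, which by the hypotheses satisfy exactly the two algebraic constraints of that lemma, giving $(a - U_\infty)^2 + (b - W_\infty)^2 + (c - Z_\infty)^2 \le C(ab - c)^2$; note that $a,b,c$, and hence $\bar A\le a$, $\bar B\le b$, $\bar C\le c$, as well as $\norm{B-\bar B}{L^2(\Gamma)}$ and $\norm{C-\bar C}{L^2(\Gamma)}$, are all bounded by $M_1,M_2,|\Omega|,|\Gamma|$ alone.

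Next I would record the elementary fact that $\sqrt{\bar{X^2}}$ differs from $\bar X$ only by a variance term: for $x=\sqrt{\bar{X^2}}$ one has $x^2-\bar X^2=\tfrac1{|M|}\norm{X-\bar X}{L^2(M)}^2=:\sigma_X^2$, hence $\sigma_X\le x$ and $0\le x-\bar X=\sigma_X^2/(x+\bar X)\le\sigma_X$, so $|M|(x-\bar X)^2\le\norm{X-\bar X}{L^2(M)}^2$. Applying this to $A$, $B$, $C$ lets me interchange $a,b,c$ with $\bar A,\bar B,\bar C$ up to variance terms, so the whole problem collapses to estimating $(ab-c)^2$.

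The heart of the argument — and what I expect to be the main obstacle — is bounding $(ab-c)^2$ by $\norm{AB-C}{L^2(\Gamma)}^2$, because $a=\sqrt{\bar{A^2}}$ is a \emph{bulk} quantity whereas $\norm{AB-C}{L^2(\Gamma)}$ involves the \emph{trace} $A|_\Gamma$. I would decompose
\[ ab - c = (ab - \bar A\bar B) + \bar B\,(\bar A - \overline{A|_\Gamma}) - \overline{(A|_\Gamma - \overline{A|_\Gamma})(B-\bar B)} + \big(\overline{A|_\Gamma B} - \bar C\big) + (\bar C - c), \]
where the overlines in the middle terms are surface averages. The fourth term equals $\tfrac1{|\Gamma|}\int_\Gamma(AB-C)$, so Cauchy--Schwarz bounds its square by $\tfrac1{|\Gamma|}\norm{AB-C}{L^2(\Gamma)}^2$, producing the term $L_1\norm{AB-C}{L^2(\Gamma)}^2$. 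The first and last terms are handled by the variance estimate of the previous paragraph (using boundedness of $a$, $\bar B$). For the two remaining terms I would use $\bar A - \overline{A|_\Gamma}=-\tfrac1{|\Gamma|}\int_\Gamma(A|_\Gamma-\bar A)$ and bound both by $\norm{A|_\Gamma-\bar A}{L^2(\Gamma)}$ (times the bounded factor $\norm{B-\bar B}{L^2(\Gamma)}$ for the covariance term, noting $\norm{A|_\Gamma-\overline{A|_\Gamma}}{L^2(\Gamma)}\le\norm{A|_\Gamma-\bar A}{L^2(\Gamma)}$), and then invoke the interpolated trace inequality \eqref{eq:interpolatedTrace}, $\norm{A|_\Gamma-\bar A}{L^2(\Gamma)}^2\le\epsilon\norm{\grad A}{L^2(\Omega)}^2+C_\epsilon\norm{A-\bar A}{L^2(\Omega)}^2$.

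Choosing $\epsilon$ a suitable multiple of $\rho$ and splitting all mixed products with Young's inequality yields $(ab-c)^2\le L_1\norm{AB-C}{L^2(\Gamma)}^2+L_2(\rho)\big(\norm{A-\bar A}{L^2(\Omega)}^2+\norm{B-\bar B}{L^2(\Gamma)}^2+\norm{C-\bar C}{L^2(\Gamma)}^2\big)+\rho\norm{\grad A}{L^2(\Omega)}^2$, which combined with the reductions above gives the claim. Thus the $\rho\norm{\grad A}{L^2(\Omega)}^2$ term in the statement is exactly the price of passing from the bulk average of $A$ to its trace on $\Gamma$ and cannot be avoided; the rest of the proof is bookkeeping, the only care needed being to keep track of which quantities are a priori bounded in terms of $M_1$, $M_2$, $|\Omega|$, $|\Gamma|$.
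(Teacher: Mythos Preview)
Your approach is correct and genuinely different from the paper's. Both proofs ultimately feed the scalar Lemma~\ref{lem:equilibriumConstants} and both pay for the bulk/surface mismatch via the interpolated trace inequality~\eqref{eq:interpolatedTrace}, but the organisation is quite different. The paper first proves (its Step~1) a lower bound $\norm{AB-C}{L^2(\Gamma)}^2 \ge \tfrac{|\Gamma|}{2}|\bar A\bar B-\bar C|^2 - K(\text{variance terms on }\Gamma)$ by a good-set/bad-set decomposition on $\Gamma$ together with Chebyshev's inequality; then (Step~2) it splits into two cases according to whether $\bar{A^2},\bar{B^2},\bar{C^2}$ are all bounded below by some $\epsilon^2$, the case distinction being forced by terms of the form $2U_\infty\bar{\delta_1^2}/(\sqrt{\bar{A^2}}+\bar A)$ that blow up when $\bar{A^2}$ degenerates, with a separate direct argument for the degenerate case. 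Your route sidesteps both devices: working directly with $a=\sqrt{\bar{A^2}}$ and using the uniform estimate $0\le a-\bar A\le\sigma_A$ (valid with no lower bound on $\bar{A^2}$) removes the need for the case split, and writing $ab-c$ as a telescoping sum whose one ``main'' piece is $\tfrac1{|\Gamma|}\int_\Gamma(AB-C)$ replaces the good-set/bad-set argument by a single Cauchy--Schwarz. The result is shorter and more transparent; the paper's version, by contrast, hews closer to the arguments in \cite{FellnerLaamri,BaoFellnerLatos} and makes the structure of the constants somewhat more explicit.
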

\begin{proof}We shall adapt techniques presented in the proofs of \cite[Lemma 3.5]{BaoFellnerLatos} and \cite[Lemma 3.2]{FellnerLaamri}; the latter paper covers our type of reaction term but all quantities are on a single (bulk) domain, whilst the former has a rather different reaction term to the one in our case but there is a bulk-surface coupling between the quantities. 

The proof is split into two steps. It is important to bear in mind that below all mean values of $A$ or $A^2$ are taken over the domain $\Omega$ and never on $\Gamma$. 

\noindent \textsc{Step 1.} In the first step, we shall prove
\begin{align*}
\norm{AB-C}{L^2(\Gamma)}^2 \geq \frac{|\Gamma|}{2}|\bar A \bar B-\bar C|^2 - K_1\left(\norm{A-\bar A}{L^2(\Gamma)}^2 + \norm{B-\bar B}{L^2(\Gamma)}^2+\norm{C-\bar C}{L^2(\Gamma)}^2\right).
\end{align*}
This inequality establishes a relationship between the $L^2$ norm of $A$ on $\Gamma$ and the mean value of $A$ on $\Omega$. Define $\delta_1 = A-\bar A$ on $\Omega$ and $\delta_2$ and $\delta_3$ on $\Gamma$ similarly. Define the set
\[S := \{x \in \Gamma \mid |\delta_1(x)|, |\delta_2(x)|, |\delta_3(x)| \leq K\}\]
which is  well defined due to the trace theorem. Since $AB=(\bar A + \delta_1)(\bar B + \delta_2) = \bar A \bar B + \bar A\delta_2 + \bar B  \delta_1 +  \delta_1 \delta_2$ and $C=\bar C +\delta_3$, we have with the aid of the identity $2ab \leq  a^2\slash 2 + 2 b^2$ that
\begin{align}
\nonumber \norm{ AB-C}{L^2(S)}^2 
 &\geq \frac{1}{2}\norm{\bar A \bar B - \bar C}{L^2(S)}^2 - \norm{\bar A \delta_2 + \bar B  \delta_1 +  \delta_1\delta_2 - \delta_3}{L^2(S)}^2\\
&\geq \frac{1}{2}\norm{\bar A \bar B - \bar C}{L^2(S)}^2 - \underbrace{2\max\left(\frac{M_1}{|\Omega|}, \frac{M_2}{|\Gamma|}, K^2, 1\right)}_{=:K_1}\int_S |\delta_2|^2 + | \delta_1|^2 + |\delta_3|^2.\label{eq:a}
\end{align}
where we used that $\bar A^2 \leq \bar{A^2} \leq M_1\slash |\Omega|$. Now we work on $S^\perp = \{x \in \Gamma \mid |\delta_1(x)| \text{ or } |\delta_2(x)| \text{ or } |\delta_3(x)| > K\}$. By Chebyshev's inequality,
\[|\{\delta_i^2 > K^2\}| 
\leq \frac{1}{K^2}\int_{\Gamma}|\delta_i|^2\]
and therefore
$|S^\perp| \leq (3\slash K^2)(\int_\Gamma | \delta_1|^2 + |\delta_2|^2 + |\delta_3|^2)$. Hence
\[\norm{\bar A \bar B-\bar C}{L^2(S^\perp)}^2 \leq \left(\frac{M_1}{|\Omega|}+1\right)\frac{M_2}{|\Gamma|}|S^\perp| \leq K_2\left(\int_\Gamma | \delta_1|^2 + |\delta_2|^2 + |\delta_3|^2\right)\] 
where $K_2= (3M_2\slash K^2|\Gamma|)\left(M_1\slash |\Omega|+1\right)$. This leads to 
\begin{align*}
\norm{ A B-C}{L^2(S^\perp)}^2 &\geq \norm{\bar A \bar B-\bar C}{L^2(S^\perp)} -K_2\left(\int_\Gamma | \delta_1|^2 + |\delta_2|^2 + |\delta_3|^2\right)\end{align*}
since the right hand side is non-positive, and finally, combining this with \eqref{eq:a},
\begin{align}
\norm{AB-C}{L^2(\Gamma)}^2 
&\geq \frac{|\Gamma|}{2}|\bar A \bar B- \bar C|^2 - \underbrace{\left(K_1+\frac{K_2}{2}\right)}_{=: K_3}\left(\norm{ A - \bar A}{L^2(\Gamma)}^2 + \norm{B-\bar B}{L^2(\Gamma)}^2 + \norm{C-\bar C}{L^2(\Gamma)}^2 \right).\label{eq:ineq1}
\end{align}
\textsc{Step 2.} Define $\mu_i$ by
\[\sqrt{\bar{A^2}}=U_\infty(1+\mu_1), \qquad \sqrt{\bar{B^2}}=W_\infty(1+\mu_2),\qquad \sqrt{\bar{C^2}}=Z_\infty(1+\mu_3)\]
and observe that $\mu_i \geq -1$. 
We see that
$|\Omega|\bar{\delta_1^2}  
= \norm{A-\bar A}{L^2(\Omega)}^2 
= |\Omega|(\bar{A^2}-\bar A^2)$
which implies
\begin{equation}\label{eq:meanValue1}
\bar A =  \sqrt{\bar{A^2}} - \frac{\bar{\delta_1^2}}{\sqrt{\bar{A^2}}+\bar A} = U_\infty(1+\mu_1) - \frac{\bar{\delta_1^2}}{\sqrt{\bar{A^2}}+\bar A}.
\end{equation}
Thus the terms on the left hand side of the statement to be proved can be written as
\begin{align*}
\norm{A-U_\infty}{L^2(\Omega)}^2 
&= |\Omega|\left(U_\infty^2(1+\mu_1)^2 - 2U_\infty^2(1+\mu_1) + \frac{2U_\infty\bar{\delta_1^2}}{\sqrt{\bar{A^2}}+\bar A} + {U_\infty^2}\right)\\
&=|\Omega|\left(\mu_1^2U_\infty^2 + \frac{2U_\infty \bar{\delta_1^2}}{\sqrt{\bar{A^2}}+\bar A}\right)
\end{align*}
which is unbounded for vanishing $\bar{A^2}$ (and $\bar{A^2} \geq \bar A^2$). So we consider two subcases:

\noindent \textsc{Case 1} ($\bar{A^2}, \bar{B^2}, \bar{C^2} \geq \epsilon^2$).
Now, observe that the left hand side of the statement to be proved is
\begin{align*}
&\norm{A-U_\infty}{L^2(\Omega)}^2 + \norm{B-W_\infty}{L^2(\Gamma)}^2 + \norm{C-Z_\infty}{L^2(\Gamma)}^2\\
&= |\Omega|\left(\mu_1^2U_\infty^2 + \frac{2U_\infty \bar{\delta_1^2}}{\sqrt{\bar{A^2}}+\bar A}\right) + |\Gamma|\left(\mu_2^2W_\infty^2 + \frac{2W_\infty \bar{\delta_2^2}}{\sqrt{\bar{B^2}}+\bar B}\right) +|\Gamma|\left(\mu_3^2Z_\infty^2 + \frac{2Z_\infty \bar{\delta_3^2}}{\sqrt{\bar{C^2}}+\bar C}\right)\\
&\leq |\Omega|\mu_1^2U_\infty^2 + |\Gamma|\mu_2^2W_\infty^2 + |\Gamma|\mu_3^2Z_\infty^2 + \underbrace{\frac{2}{\epsilon}\max\left(\sqrt{\frac{M_1}{|\Omega|}}, \sqrt{\frac{M_2}{|\Gamma|}}\right)}_{=: K_4}\left(\bar{\delta_1^2} + \bar{\delta_2^2}+\bar{\delta_3^2}\right).
\end{align*}
To deal with the right hand side of the statement to be shown, first defining
\[F_A := \frac{1}{\sqrt{\bar{A^2}}+\bar A} \leq \frac{1}{\sqrt{\bar{A^2}}} \leq \frac{1}{\epsilon}\qquad\text{and}\qquad  G_{U,B}:= U_\infty(1+\mu_1)F_B = \sqrt{\bar{U^2}}F_B \leq \frac{1}{\epsilon}\sqrt{\frac{M_1}{|\Omega|}},\]
note that, with \eqref{eq:meanValue1},
\begin{align}
\nonumber \bar A\bar B - \bar C
 &= \left(U_\infty(1+\mu_1) - {\bar{\delta_1^2}}F_A\right)\left(W_\infty(1+\mu_2) - {\bar{\delta_2^2}}F_B\right) - \left(Z_\infty(1+\mu_3) - {\bar{\delta_3^2}}F_C\right)\\
\nonumber &= Z_\infty((1+\mu_1)(1+\mu_2)-(1+\mu_3))  - {\bar{\delta_2^2}}G_{U,B} - {\bar{\delta_1^2}}G_{W,A} + {\bar{\delta_1^2}}{\bar{\delta_2^2}}F_AF_B + {\bar{\delta_3^2}}F_C\\
&\geq Z_\infty((1+\mu_1)(1+\mu_2)-(1+\mu_3)) - \frac{\sqrt{K_5}}{\sqrt{2C_M}}(\bar{\delta_1^2} + \bar{\delta_2^2} +\bar{\delta_3^2})\label{eq:inequal2}
\end{align}
where we used $
\bar{\delta_1^2} \leq \bar{A^2} + {\bar A^2}
\leq 2M_1\slash |\Omega|$ and $K_5=K_5(\epsilon, M_1, M_2)$ is defined so that the inequality holds, and $C_M$ is the upper bound $(\bar{\delta_1^2} + \bar{\delta_2^2} +\bar{\delta_3^2}) \leq C_M$.
So the right hand side of the claimed inequality becomes, recalling \eqref{eq:ineq1} and the interpolated trace inequality
\begin{align*}
&L_1\norm{AB-C}{L^2(\Gamma)}^2 + L_2\left(\norm{A-\bar A}{L^2(\Omega)}^2 + \norm{B-\bar B}{L^2(\Gamma)}^2 + \norm{C-\bar C}{L^2(\Gamma)}^2\right) + \rho\norm{\grad A}{L^2(\Omega)}^2 \\
&\geq L_1\left( \frac{|\Gamma|}{2}|\bar A \bar B-\bar C|^2 - K_3\left(C_\rho\norm{A-\bar A}{L^2(\Omega)}^2 + \frac{\rho}{K_3L_1}\norm{\grad A}{L^2(\Omega)}^2 + \norm{B-\bar B}{L^2(\Gamma)}^2+\norm{C-\bar C}{L^2(\Gamma)}^2\right)\right)\\
&\quad+  L_2\left(\norm{A-\bar A}{L^2(\Omega)}^2 + \norm{B-\bar B}{L^2(\Gamma)}^2 + \norm{C-\bar C}{L^2(\Gamma)}^2\right)+\rho\norm{\grad A}{L^2(\Omega)}^2\\
&\geq \frac{|\Gamma|L_1}{4}\left(Z_\infty^2((1+\mu_1)(1+\mu_2)-(1+\mu_3))^2 - K_5(\bar{\delta_1^2} + \bar{\delta_2^2} + \bar{\delta_3^2})\right)\\
&\quad+ (L_2-K_3L_1\max(C_\rho, 1))\left(|\Omega|\bar{\delta_1^2} + |\Gamma|(\bar{\delta_2^2} + \bar{\delta_3^2})\right)\tag{by \eqref{eq:inequal2}}\\
&\geq \frac{L_1|\Gamma|}{4}\left(Z_\infty^2((1+\mu_1)(1+\mu_2)-(1+\mu_3))^2\right)\\
&\quad+ \underbrace{\left((L_2-K_3L_1\max(C_\rho, 1))\min(|\Omega|, |\Gamma|)-\frac{|\Gamma|L_1K_5}{4}\right)}_{=:K_6}\left(\bar{\delta_1^2} + \bar{\delta_2^2} + \bar{\delta_3^2}\right)
\end{align*}
where when we recalled \eqref{eq:inequal2} we used that if $a \geq b -c$ for non-negative constants then $2a^2 \geq b^2 - 2c^2$ and the fact that the $\bar{\delta_i^2}$ are bounded. It suffices to prove that
\begin{align*}
|\Omega|\mu_1^2U_\infty^2 + |\Gamma|\mu_2^2W_\infty^2 + |\Gamma|\mu_3^2Z_\infty^2 &\leq \frac{L_1|\Gamma|}{4}\left(Z_\infty^2((1+\mu_1)(1+\mu_2)-(1+\mu_3))^2\right) + (K_6-K_4)\left(\bar{\delta_1^2} + \bar{\delta_2^2} + \bar{\delta_3^2}\right)
\end{align*}
which follows by Lemma \ref{lem:equilibriumConstants} provided $K_6\geq K_4$ and this is the case if $L_2$ is chosen sufficiently large.

\noindent \textsc{Case 2} ($\bar{A^2}$  or $\bar{B^2}$ or $\bar{C^2} \leq \epsilon^2$). 
In this case, we bound the $L^2$ norm of $A-U_\infty$ explicitly in terms of the $M_i$:
\[\norm{A-U_\infty}{L^2(\Omega)}^2 + \norm{B-W_\infty}{L^2(\Gamma)}^2 + \norm{C-Z_\infty}{L^2(\Gamma)}^2  \leq 2M_1 + 4M_2.\]
Consider the case that $\bar{A^2} \leq \epsilon^2.$ Just like at the start of Step 2, we have $\bar C^2 = \bar{C^2} - \bar{\delta_3^2}$ and thus
\begin{align*}
\bar C^2 &=\frac{M_1}{|\Gamma|} - \frac{|\Omega|}{|\Gamma|}\bar{A^2} - \bar{\delta_3^2}
\geq \frac{M_1}{|\Gamma|} - \frac{|\Omega|}{|\Gamma|}(\epsilon^2+\bar{\delta_1^2}) - \bar{\delta_3^2}.
\end{align*} 
With this, we may write, using $\bar A^2 \leq \bar{A^2} \leq \epsilon^2$,
\begin{align}
|\bar C-\bar A \bar B|^2 
\geq \frac{M_1}{|\Gamma|} - \frac{|\Omega|}{|\Gamma|}(\epsilon^2+\bar{\delta_1^2}) - \bar{\delta_3^2} - 2\bar B \bar C \epsilon\label{eq:toComp}
\end{align}
and so if $L_2$ is large enough and $\epsilon$ is small enough,
\begin{align*}
|\bar C-\bar A \bar B|^2 + L_2(\bar{\delta_1^2} + \bar{\delta_2^2} + \bar{\delta_3^2}) &\geq \frac{M_1}{|\Gamma|} - \frac{|\Omega|}{|\Gamma|}\epsilon^2 - 2\bar B \bar C \epsilon + \left(L_2 - \frac{|\Omega|}{|\Gamma|}\right)\bar{\delta_1^2} + L_2\bar{\delta_2^2} + (L_2-1) \bar{\delta_3^2}   \geq C^* > 0.
\end{align*}
From the top, we find
\begin{align*}
\norm{A-U_\infty}{L^2(\Omega)}^2 + \norm{B-W_\infty}{L^2(\Gamma)}^2 + \norm{C-Z_\infty}{L^2(\Gamma)}^2 
&\leq \frac{2M_1 + 4M_2}{C^*}\left(|\bar C-\bar A \bar B|^2 + L_2(\bar{\delta_1^2} + \bar{\delta_2^2} + \bar{\delta_3^2})\right).
\end{align*}
The $\bar{B^2} \leq \epsilon^2$ case can be dealt with similarly. For the $\bar{C^2} \leq \epsilon^2$ case, we see that, like above,
\begin{align*}
\bar A^2 
&\geq \frac{M_1}{|\Omega|} - \frac{|\Gamma|}{|\Omega|}(\epsilon^2+\bar{\delta_3^2}) - \bar{\delta_1^2} \qquad\text{and}\qquad
\bar B^2 
\geq \frac{M_2}{|\Gamma|}  - (\epsilon^2 + \delta_3^2)- \bar{\delta_2^2}
\end{align*}
and so in lieu of \eqref{eq:toComp}
\begin{align*}
(\bar C - \bar A \bar B)^2 &\geq (\bar A \bar B)^2 - 2\bar A \bar B \bar C
\geq \left(\frac{M_1}{|\Omega|} - \frac{|\Gamma|}{|\Omega|}(\epsilon^2+\bar{\delta_3^2}) - \bar{\delta_1^2} \right)\left(\frac{M_2}{|\Gamma|}  - (\epsilon^2 + \delta_3^2)- \bar{\delta_2^2}\right) - 2\bar A \bar B \epsilon 
\end{align*}
and the same argument as before gives the result.
\end{proof}

Finally we are able to state and prove the desired entropy-dissipation estimate.

\begin{theorem}\label{thm:equilibriumDandE}
There exists a constant $K > 0$ such that
\[D(u,w,z) \geq K(E(u,w,z) - E(u_\infty, w_\infty, z_\infty)).\]
\end{theorem}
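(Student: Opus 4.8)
The plan is to bound the relative entropy $E(u,w,z)-E(u_\infty,w_\infty,z_\infty)$ from above by a fixed multiple of $D(u,w,z)$; taking the reciprocal of that constant then furnishes $K$. I would begin from the identity established inside the proof of Theorem~\ref{thm:equilibriumRelativeEntropy},
\begin{align*}
E(u,w,z) - E(u_\infty,w_\infty,z_\infty) &= \int_\Omega u\log\left(\frac{u}{\bar u}\right) + \int_\Gamma w\log\left(\frac{w}{\bar w}\right) + \int_\Gamma z\log\left(\frac{z}{\bar z}\right)\\
&\quad + |\Omega|\left(\bar u\log\left(\frac{\bar u}{u_\infty}\right) - (\bar u - u_\infty)\right) + |\Gamma|\left(\bar w\log\left(\frac{\bar w}{w_\infty}\right) - (\bar w - w_\infty)\right)\\
&\quad + |\Gamma|\left(\bar z\log\left(\frac{\bar z}{z_\infty}\right) - (\bar z - z_\infty)\right),
\end{align*}
which uses $z_\infty=u_\infty w_\infty$ together with the conservation laws \eqref{eq:conservationMass} (when a mean vanishes the corresponding species is identically zero and the relevant entropy term drops out, so we may as well assume $\bar u,\bar w,\bar z>0$). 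The first three terms are the relative entropies of each species against its own average; by the logarithmic Sobolev inequality \eqref{eq:logSobolev} and the identity $|\grad_M\sqrt f|^2=|\grad_M f|^2/(4f)$ they are bounded, respectively, by $C_L(\Omega)\int_\Omega|\grad u|^2/u$, $\delta_\Gamma^{-1}C_L(\Gamma)\big(\delta_\Gamma\int_\Gamma|\sgrad w|^2/w\big)$ and $\delta_{\Gamma'}^{-1}C_L(\Gamma)\big(\delta_{\Gamma'}\int_\Gamma|\sgrad z|^2/z\big)$, hence altogether by $C_1\,D(u,w,z)$.

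For the three mean-value terms I would use the elementary inequality $x\log(x/y)-(x-y)\le C(y,M)(x-y)^2$, valid for $x\in[0,M]$, $y\in(0,M]$ (the ratio of the two sides extends continuously across $x=y$ and is finite at $x=0$), together with the \emph{a priori} bounds $\bar u,u_\infty\le M_1/|\Omega|$, $\bar w,w_\infty\le M_2/|\Gamma|$ and $\bar z,z_\infty\le\min(M_1,M_2)/|\Gamma|$; this reduces them to $C_2\big((\bar u-u_\infty)^2+(\bar w-w_\infty)^2+(\bar z-z_\infty)^2\big)$. Writing $A=\sqrt u$, $B=\sqrt w$, $C=\sqrt z$ and $\sqrt{\bar{A^2}}=U_\infty(1+\mu_1)$, etc.\ as in Lemma~\ref{lem:equilibriumFunctions}, an expansion gives $\norm{A-U_\infty}{L^2(\Omega)}^2=|\Omega|\big(U_\infty^2\mu_1^2+2U_\infty\bar{\delta_1^2}/(\sqrt{\bar{A^2}}+\bar A)\big)\ge|\Omega|U_\infty^2\mu_1^2$, while $\bar u-u_\infty=U_\infty^2\mu_1(2+\mu_1)$ with $2+\mu_1=1+\sqrt{\bar u/u_\infty}$ bounded; hence $(\bar u-u_\infty)^2\le C\norm{A-U_\infty}{L^2(\Omega)}^2$ and likewise for $w$ and $z$. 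The mean-value terms are therefore dominated by $\norm{A-U_\infty}{L^2(\Omega)}^2+\norm{B-W_\infty}{L^2(\Gamma)}^2+\norm{C-Z_\infty}{L^2(\Gamma)}^2$, to which Lemma~\ref{lem:equilibriumFunctions} applies, since its hypotheses are exactly the conservation laws \eqref{eq:conservationMass}, \eqref{eq:algebraic}.

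It then remains to absorb the three terms supplied by Lemma~\ref{lem:equilibriumFunctions} into the dissipation. For $a,b>0$ one has the elementary inequality $(a-b)\log(a/b)\ge4(\sqrt a-\sqrt b)^2$, which applied with $a=uw$, $b=z$ gives $\norm{AB-C}{L^2(\Gamma)}^2=\int_\Gamma(\sqrt{uw}-\sqrt z)^2\le\tfrac14\int_\Gamma(uw-z)\log(uw/z)$; the Poincar\'e inequality gives $\norm{A-\bar A}{L^2(\Omega)}^2\le C_P(\Omega)^2\norm{\grad\sqrt u}{L^2(\Omega)}^2=\tfrac14C_P(\Omega)^2\int_\Omega|\grad u|^2/u$, and analogously $\norm{B-\bar B}{L^2(\Gamma)}^2$ and $\norm{C-\bar C}{L^2(\Gamma)}^2$ are fixed multiples of $\delta_\Gamma\int_\Gamma|\sgrad w|^2/w$ and $\delta_{\Gamma'}\int_\Gamma|\sgrad z|^2/z$; and $\rho\norm{\grad A}{L^2(\Omega)}^2=\tfrac\rho4\int_\Omega|\grad u|^2/u$ for any fixed $\rho>0$ (the choice of $\rho$ being irrelevant here). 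Summing all of this yields $E(u,w,z)-E(u_\infty,w_\infty,z_\infty)\le C^*D(u,w,z)$ with $C^*$ depending only on $\Omega$, $\Gamma$, $\delta_\Gamma$, $\delta_{\Gamma'}$, $M_1$, $M_2$ and the constant of Lemma~\ref{lem:equilibriumFunctions}, so $K=1/C^*$ works. The substantive ingredient is Lemma~\ref{lem:equilibriumFunctions} (which controls the equilibrium deviations by $\norm{AB-C}{L^2(\Gamma)}^2$ up to Poincar\'e-type corrections), and that is already in hand; given it, the main thing to be careful about is the bookkeeping of which dissipation term absorbs which correction, together with the constants from the two elementary logarithmic inequalities.
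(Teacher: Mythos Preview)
Your proposal is correct and uses the same substantive ingredients as the paper: the logarithmic Sobolev inequality \eqref{eq:logSobolev} for the three ``oscillatory'' entropies, an elementary upper bound on $x\log(x/y)-(x-y)$ for the mean-value terms, the inequality $(a-b)\log(a/b)\ge 4(\sqrt a-\sqrt b)^2$ for the reaction part of $D$, Poincar\'e for the $\lVert A-\bar A\rVert$ terms, and Lemma~\ref{lem:equilibriumFunctions} to close.  The only difference is organizational: the paper bounds $D$ from below and then compares, using a $\theta$--$(1-\theta)$ split of the gradient terms so that one portion dominates the log-Sobolev contribution and the other (via Poincar\'e and Lemma~\ref{lem:equilibriumFunctions}) dominates the $\mu_i^2$ contribution; you instead bound the relative entropy from above by summing two separate multiples of $D$, which renders the splitting unnecessary.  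Both routes are equivalent and your bookkeeping is sound.
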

\begin{proof}
This theorem can be proved like Proposition 3.1 in \cite{FellnerLaamri}. Define the continuous function $\varphi\colon (0,\infty)^2 \to \mathbb{R}$ by
\[\varphi(x,y) = \frac{x\log(\frac xy) -(x-y)}{(\sqrt{x}-\sqrt{y})^2} = \varphi\left(\frac xy, 1\right)\]
which is increasing in its first argument and satisfies $\varphi(\bar u, u_\infty) = \varphi(\frac{\bar u}{u_\infty},1)$ and $\bar u\slash u_\infty \leq \frac{M}{u_\infty}$ so that $\varphi(\bar u\slash u_\infty, 1) \leq C_M$ where $C_M$ depends on the conservation law constants and the equilibrium values.  Then
\begin{align*}
\bar u\log\left(\frac{\bar u}{u_\infty}\right) - (\bar u - u_\infty) &\leq C_M(\sqrt{\bar u}-\sqrt{u_\infty})^2\tag{by definition of $\varphi$ and the bound above}\\
&= C_M(U_\infty(1+\mu_1)-\sqrt{u_\infty})^2\\
&= C_MU_\infty^2\mu_1^2
\end{align*}
where we used $\bar u = \bar{U^2} = U_\infty^2(1+\mu_1)^2$ by definition of the $\mu_i$ in the proof of Lemma \ref{lem:equilibriumFunctions}. Using this and the log Sobolev inequality \eqref{eq:logSobolev}, we find
\begin{align*}
&E(u,w,z)-E(u_\infty, w_\infty, z_\infty)\\
&= \int_\Omega u\log \frac{u}{\bar u} + \int_\Gamma w\log \frac{w}{\bar w} + \int_\Gamma z\log \frac{z}{\bar z}\\
&\quad+ |\Omega|\left(\bar u \log \frac{\bar u}{u_\infty} - (\bar u-u_\infty)\right)+ |\Gamma|\left(\bar w \log \frac{\bar w}{w_\infty} - (\bar w-w_\infty)\right) + |\Gamma|\left(\bar z \log \frac{\bar z}{z_\infty} - (\bar z-z_\infty)\right)\\
&\leq 4\max(C_L(\Omega), L_\Gamma)\left(\int_{\Omega}|\nabla U|^2 + \int_{\Gamma}|\sgrad W|^2 + \int_{\Gamma}|\sgrad Z|^2\right) + |\Omega|C_MU_\infty^2\mu_1^2 + |\Gamma|C_M(W_\infty^2 \mu_2^2 + Z_\infty^2\mu_3^2).
\end{align*}
On the other hand, using $(uw-z)(\log (uw) - \log z) \geq 4(UW-Z)^2$, we find
\begin{align*}
D(u,w,z) &\geq 4\left(\int_\Omega |\grad U|^2 + \delta_\Gamma \int_\Gamma |\sgrad W|^2 + \delta_{\Gamma'}\int_\Gamma |\sgrad Z|^2 \right) + 4\int_\Gamma (UW-Z)^2\\
&\geq 4\norm{UW-Z}{L^2(\Gamma)}^2 + 4C_P\theta\min(1,\delta_\Gamma, \delta_{\Gamma'})\left(\norm{U-\bar U}{L^2(\Omega)}^2 + \norm{W-\bar W}{L^2(\Gamma)}^2 + \norm{Z-\bar Z}{L^2(\Gamma)}^2\right)\\
&\quad + 4(1-\theta)\min(1, \delta_\Gamma, \delta_{\Gamma'})\left(\int_\Omega |\nabla U|^2 + \int_\Gamma |\sgrad W|^2 + \int_\Gamma |\sgrad Z|^2 \right)
\end{align*}
where in the final inequality we simply wrote  $1=\theta + (1-\theta)$ and employed Poincar\'e's inequality on one part of this separation. To conclude, we need to prove that the right hand side of the above exceeds
\begin{align*}
K\left(4\max(C_L(\Omega), L_\Gamma)\left(\int_{\Omega}|\nabla U|^2 + \int_{\Gamma}|\sgrad W|^2 + \int_{\Gamma}|\sgrad Z|^2\right) + |\Omega|C_MU_\infty^2\mu_1^2 + |\Gamma|C_M(W_\infty^2 \mu_2^2 + Z_\infty^2\mu_3^2)\right)
\end{align*}
for a positive constant $K$. If we choose $K$ so that ${4(1-\theta)\min(1, \delta_\Gamma, \delta_{\Gamma'})}{} - 4K\max(C_L(\Omega), L_\Gamma) \geq \epsilon$ for $\epsilon$ sufficiently small (see below),  we are left to prove that
\begin{align*}
&4\norm{UW-Z}{L^2(\Gamma)}^2 + 4C_P\theta\min(1, \delta_\Gamma, \delta_{\Gamma'})\left(\norm{U-\bar U}{L^2(\Omega)}^2 + \norm{W-\bar W}{L^2(\Gamma)}^2 + \norm{Z-\bar Z}{L^2(\Gamma)}^2\right)\\
& + \epsilon\left(\int_\Omega |\nabla U|^2 + \int_\Gamma |\sgrad W|^2 + \int_\Gamma |\sgrad Z|^2 \right)\\
&\quad\geq KC_M\left(|\Omega|U_\infty^2\mu_1^2 + |\Gamma|(W_\infty^2 \mu_2^2 + Z_\infty^2\mu_3^2)\right).
\end{align*}
Indeed, setting $A(\epsilon) := \min(4,4C_P\theta\min(1, \delta_\Gamma, \delta_{\Gamma'}), \epsilon)\min(L_1^{-1}, L_2(\epsilon)^{-1}, \epsilon^{-1})$, the left hand side of the above is greater than
\begin{align*}
&A(\epsilon) \left(L_1\norm{UW-Z}{L^2(\Gamma)}^2 + L_2(\epsilon)\left(\norm{U-\bar U}{L^2(\Omega)}^2 + \norm{W-\bar W}{L^2(\Gamma)}^2 + \norm{Z-\bar Z}{L^2(\Gamma)}^2\right) + \epsilon\norm{\grad U}{L^2(\Omega)}^2 \right)\\
&\geq A(\epsilon)\left(\norm{U-U_\infty}{L^2(\Omega)}^2 + \norm{W-W_\infty}{L^2(\Gamma)}^2 + \norm{Z-Z_\infty}{L^2(\Gamma)}^2 \right)\tag{by Lemma \ref{lem:equilibriumFunctions}}\\
&\geq A(\epsilon)(|\Omega|U_\infty^2\mu_1^2 + |\Gamma|W_\infty^2\mu_2^2 + |\Gamma|Z_\infty^2\mu_3^2).
\end{align*}
Now, fix $\epsilon$ so that
\[0 < \hat K := \frac{4(1-\theta)\min(1, \delta_\Gamma, \delta_{\Gamma'})  -\epsilon}{4\max(C_L(\Omega), L_\Gamma)} 
\]
then we pick $K$ to satisfy $0 < K \leq \hat K$ and $A(\epsilon) \geq KC_M$.
\end{proof}
\section{Conclusion}\label{sec:conclusion}
The extension of these results to higher dimensions is an open issue and perhaps the stage where we use the De Giorgi method can be improved to utilise the fact that the equations are coupled and thus it may make sense to treat the full $3 \times 3$ system in a unified approach to derive the $L^\infty$ bounds. In terms of the model, we could also consider equations on the interior of the surface $\Gamma(t)$, i.e. on $I(t)$; including such equations may result in more realistic models for certain applications and there is biological motivation \cite{BioPaper} to do so.  
We did not have time to consider some fast reaction and diffusion limits for the system. The idea is to send some of the parameters appearing in \eqref{eq:modelALE} to zero and study the resulting problems, which in fact turn out to be free boundary problems. The rigorous justification of these limits needs the resolvement of some technical issues so we will address this limiting behaviour in separate paper.
\section*{Acknowledgements}
AA acknowledges support from the Warwick Mathematics Platform grant 3117 when this work was started and the Weierstrass Institute when this work was finished. AA would like to thank CME, JT and the Mathematics Institute at the University of Warwick for organising a productive research visit in November 2016. The research of CME was partially supported
by the Royal Society via a Wolfson Research Merit Award. This work was done while JT was a Visiting Academic at the Mathematics Institute of the University of Warwick. She is grateful for the warm hospitality.  The authors thank the referees for their useful comments and feedback.
\appendix
\section{Derivation of the model}\label{sec:derivation}
In this section we derive the model system \eqref{eq:model} from conservation laws and transport theorems applied to the bulk and to the surface. We begin by addressing the bulk equation. Let $M(t)\subset \Omega(t)$ be a portion of the bulk domain with boundary $\partial M(t)$ moving with a velocity field $\mathbf V_a$ (which has to be such that the normal component of $\mathbf V_a$ agrees with $\mathbf V$ on $\Gamma$ and $\mathbf V_o$ on $\partial D$). Consider the conservation law
\begin{equation*}\label{cons.law}
\frac{d}{dt}\int_{M(t)}u=-\int_{\partial M(t)}\vec{q}\cdot\nu_M
\end{equation*}
where $\nu_M=\nu_M(t)$ is the outward normal vector to $\partial M(t)$ and no reaction term inside the domain is considered. Then, by the divergence theorem 
and Reynolds transport theorem we find that
$$\int_{M(t)}u_t+\nabla\cdot(u\mathbf V_a)=-\int_{M(t)}\nabla\cdot\vec{q}.$$ 
Now we choose $\vec{q}=-D\nabla u+u(\mathbf V_\Omega-\mathbf V_a)$ where the advective term takes into account the fact that points in $\Omega(t)$ are subject to a material velocity field $\mathbf V_\Omega$, and we use the arbitrariness of $M(t)$ to obtain
\begin{equation}\label{eqbulk}
u_t+\nabla\cdot(u\mathbf V_\Omega)-D\Delta u=0.
\end{equation}
We derive now the equations on the surface, along the same lines as in \cite{Dziuk2006}. As before let $M(t) \subset \Gamma(t)$ be a portion of the surface with boundary $\partial M(t)$ which is moving with the normal velocity $\mathbf V$. The conservation law now admits a reaction term inside $M$ in addition to the flux along the boundary $\partial M$
\begin{equation*}\label{cons.law2}
\frac{d}{dt}\int_{M(t)}\vartheta=\int_{M(t)}f-\int_{\partial M(t)}\vec{q}\cdot\mu
\end{equation*}
where $f$ is the forcing term to be defined later and $\mu$ is a conormal vector, that is, it is an outward unitary vector normal to $\partial M$ and tangential to $\Gamma$. The flux is given by the vector $\vec{q}$. Now, using the integration by parts formula
$$\int_{\partial M(t)} \vec{q}\cdot\mu=\int_{M(t)}\left(\nabla_\Gamma\cdot\vec{q}+\vec{q}\cdot\nu H\right)$$
and the transport theorem on surfaces, we may write
$$\int_{M(t)}\partial^\circ \vartheta + \vartheta\sgrad \cdot \mathbf V +\nabla_\Gamma\cdot\vec{q} + \vec{q}\cdot \nu H=\int_{M(t)}f,$$
where $\partial^\circ \vartheta=\vartheta_t+ \nabla \vartheta \cdot \mathbf V$ is the normal time derivative \cite{CFG, Dziuk2013}, which is the material derivative with respect to a velocity field $\mathbf{V}$ that is purely normal.
Similar to before we choose $\vec{q}=-D_\vartheta\nabla_\Gamma \vartheta+\vartheta(\mathbf V_\Gamma-\mathbf V)$ which gives the pointwise equation
\begin{equation}\label{eqsurface}
\partial^\circ \vartheta + \vartheta\sgrad \cdot \mathbf V  + \nabla_\Gamma\cdot\left(\vartheta\mathbf V_\Gamma^\tau\right)-D_\vartheta\Delta_\Gamma \vartheta=f.
\end{equation}
Equations \eqref{eqbulk} and \eqref{eqsurface} correspond to those on the model problem \eqref{eq:model} by taking $f=r$ and $D_\vartheta = \delta_\Gamma$ for the equation for $w$ and $f=-r$ and $D_\vartheta = \delta_{\Gamma'}$ for the equation for $z$.   
\section{Preliminary results}\label{sec:prelim}
In this section we collect some technical facts that are used in the paper. Here and below, $\Omega \subset \mathbb{R}^{d+1}$ is a sufficiently smooth bounded domain with $\partial\Omega =: \Gamma$. 
\subsection{Calculus identities}Observe that for sufficiently regular functions $a$ and $b$ defined on  $\Omega$ and vector fields $\mathbf{J}$, 
\[\int_\Omega (\mathbf{J}\cdot\grad a)b + (\grad\cdot\mathbf{J})ab  = \int_\Omega \grad \cdot (a\mathbf{J})b = \int_\Omega\grad \cdot (a\mathbf{J}b) - a\mathbf{J}\cdot \grad b = \int_{\Gamma}ab \mathbf{J} \cdot \nu -\int_\Omega a(\mathbf{J}\cdot \grad b).\]
From this we can deduce several expressions that will be useful throughout the paper. 

\noindent \textbf{Bulk identities.}
\begin{align}
    \nonumber \int_\Omega  (\mathbf{J}\cdot \grad a)a &= \frac 12 \int_{\Gamma} a^2\mathbf{J}\cdot \nu - \frac 12\int_\Omega (\grad \cdot \mathbf{J})a^2 \\
    \int_\Omega \grad \cdot (\mathbf{J}a)a_k  &= \int_\Omega (\grad \cdot \mathbf{J}) aa_k +\frac 12 \int_\Gamma a_k^2(\mathbf{J}\cdot\nu) - \frac{1}{2}\int_\Omega a_k^2 \grad \cdot \mathbf{J}\label{eq:bulkPosId}\\
    \int_\Omega \grad \cdot (\mathbf{J}_\Omega a)a^+  &= \frac 12\int_\Omega |a^+|^2(\grad \cdot \mathbf{J}_\Omega )  +\frac 12 \int_\Gamma j|a^+|^2\label{eq:bulkIBPidentity}
\end{align}
Here, we used that $\grad a=\grad a_k$  in $\operatorname{supp} \{a_k\}$ to write
$\grad\cdot (\mathbf{J}a)a_k= (\grad\cdot\mathbf{J})aa_k+ (\mathbf{J}\cdot\grad a_k)a_k$, and we recalled that $j$ is the jump on the velocities defined before. In a similar way we can deduce formulae if $a$, $b$ and $\mathbf J$ are now defined on $\Gamma$.

\noindent \textbf{Surface identities}
\begin{align}
\nonumber \int_\Gamma (\mathbf{J}\cdot \sgrad a)a &= -\frac 12 \int_{\Gamma} a^2H\mathbf{J}\cdot \nu - \frac 12\int_\Gamma (\sgrad \cdot \mathbf{J})a^2 \\
\int_\Gamma \sgrad \cdot (\mathbf{J}a)a_k  &= \int_\Gamma (\sgrad \cdot \mathbf{J}) aa_k  -\frac 12 \int_\Gamma a_k^2H(\mathbf{J}\cdot\nu) - \frac{1}{2}\int_\Gamma a_k^2 \sgrad \cdot \mathbf{J}\label{eq:surfacePosId}\\
\int_\Gamma \sgrad \cdot (\mathbf{J}_\Gamma a)a^+  &= \frac 12\int_\Gamma |a^+|^2 \sgrad \cdot \mathbf{J}_\Gamma\label{eq:surfaceIBPidentity} 
\end{align}
Above, we used the divergence theorem $\int_\Gamma \sgrad \cdot \mathbf J = -\int_\Gamma H\mathbf J\cdot \nu$ on closed surfaces \cite[Theorem 2.10]{Dziuk2013}.

The final equalities in the two sets of identities also hold when $a^+$ is replaced with $a^-$. 
\subsection{Useful inequalities}
We frequently use the interpolated trace inequality \cite[Theorem 1.5.1.10]{Grisvard}: given $u \in H^1(\Omega)$, the following holds for any $\epsilon > 0$:
\begin{equation}\label{eq:interpolatedTrace}
\int_{\Gamma}|u|^2 \leq \epsilon\int_\Omega |\grad u|^2 + \frac{C}{\epsilon}\int_\Omega |u|^2.
\end{equation}
We also use the standard Sobolev inequality: for $v \in H^1(\Gamma)$, 
\begin{equation}\label{eq:sobolevInequality}
C_I\norm{v}{L^p(\Gamma)} \leq \norm{v}{H^1(\Gamma)} \quad\text{where}\quad \begin{cases}
1 \leq p < \infty &: d\leq 2\\
p = \frac{2d}{d-2}&: d> 2.
\end{cases}
\end{equation}
\textbf{Interpolation inequalities}. Let us record some interpolation inequalities related to the quantities 
\begin{equation*}\label{normQ}
\norm{u}{Q(\Gamma)} := \max_{t \in [0,T]} \norm{u(t)}{L^2(\Gamma(t))} + \norm{\sgrad u}{L^2_{L^2(\Gamma)}}
\end{equation*}
and
\[\norm{u}{Q(\Omega)} := \max_{t \in [0,T]}\norm{u(t)}{L^2(\Omega(t))} + \norm{\grad u}{L^2_{L^2(\Omega)}}.\]
\begin{lem}\label{lem:gn}For $r_* > 2$ and $q_*$ defined by \[\frac{1}{q_*} = \frac{d-2}{r_*d} + \frac{r_*-2}{2r_*},\] we have
\begin{equation}\label{eq:gn}
\norm{u}{L^{r_*}_{L^{q_*}(\Gamma)}} \leq C_1(T)\norm{u}{Q(\Gamma)}
\end{equation}
where $C_1(T)=C_1\sqrt T$ if $T > 1$, otherwise $C_1$ is independent of $T$.
\end{lem}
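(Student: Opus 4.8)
The plan is to prove the Gagliardo--Nirenberg-type estimate \eqref{eq:gn} first at the level of a single fixed surface $\Gamma(\tau)$ and then integrate in time, using that the time-evolving norms are uniformly comparable to those on the reference surface $\Gamma_0$. First I would recall the stationary Gagliardo--Nirenberg--Sobolev inequality on the (fixed, compact, $C^2$) hypersurface $\Gamma_0$: for a function $f \in H^1(\Gamma_0)$ one has, for the exponents in the statement,
\[
\norm{f}{L^{q_*}(\Gamma_0)} \le C \norm{f}{H^1(\Gamma_0)}^{\theta}\norm{f}{L^2(\Gamma_0)}^{1-\theta},
\]
where the interpolation parameter $\theta$ is determined by scaling: $\frac{1}{q_*} = \theta\left(\frac12 - \frac1d\right) + (1-\theta)\frac12$, i.e. $\theta = \frac{2}{r_*}$ exactly when $q_*$ is defined by the displayed relation $\frac{1}{q_*} = \frac{d-2}{r_* d} + \frac{r_*-2}{2 r_*}$. (One should double-check this arithmetic against the definition of $q_*$; this is the one genuinely computational point, but it is elementary.) This is the $d$-dimensional hypersurface version of the classical inequality and follows from the Sobolev embedding \eqref{eq:sobolevInequality} combined with Hölder interpolation between $L^2$ and $L^{2d/(d-2)}$.

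Next I would transfer this to the moving surface. Since $\Phi_t$ is a $C^3$-diffeomorphism with $\Phi_{(\cdot)} \in C^3([0,T]\times\Gamma_0)$, the pullback $\phi_{-t}$ is a linear homeomorphism between $H^1(\Gamma(t))$ and $H^1(\Gamma_0)$ and between $L^q(\Gamma(t))$ and $L^q(\Gamma_0)$, with operator norms bounded uniformly in $t\in[0,T]$ (the bounds depending only on $\sup_t$ of the Jacobian and its inverse, which are finite by the regularity of $\Phi$). Hence the inequality above holds on $\Gamma(t)$ for each $t$ with a constant independent of $t$:
\[
\norm{u(t)}{L^{q_*}(\Gamma(t))} \le C\norm{u(t)}{H^1(\Gamma(t))}^{2/r_*}\norm{u(t)}{L^2(\Gamma(t))}^{1-2/r_*}.
\]
Bounding $\norm{u(t)}{H^1(\Gamma(t))} \le \norm{u(t)}{L^2(\Gamma(t))} + \norm{\sgrad u(t)}{L^2(\Gamma(t))}$ and $\norm{u(t)}{L^2(\Gamma(t))} \le \norm{u}{Q(\Gamma)}$ for a.e. $t$, we get
\[
\norm{u(t)}{L^{q_*}(\Gamma(t))}^{r_*} \le C^{r_*}\norm{u}{Q(\Gamma)}^{r_*-2}\left(\norm{u(t)}{L^2(\Gamma(t))}+\norm{\sgrad u(t)}{L^2(\Gamma(t))}\right)^{2}.
\]

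Finally I would integrate over $t\in[0,T]$:
\[
\norm{u}{L^{r_*}_{L^{q_*}(\Gamma)}}^{r_*} = \int_0^T \norm{u(t)}{L^{q_*}(\Gamma(t))}^{r_*}\,\mathrm{d}t \le C^{r_*}\norm{u}{Q(\Gamma)}^{r_*-2}\int_0^T\left(\norm{u(t)}{L^2(\Gamma(t))}+\norm{\sgrad u(t)}{L^2(\Gamma(t))}\right)^2\mathrm{d}t.
\]
The remaining integral is estimated by $(a+b)^2\le 2a^2+2b^2$ and then $\int_0^T \norm{u(t)}{L^2(\Gamma(t))}^2\,\mathrm{d}t \le T\norm{u}{Q(\Gamma)}^2$ while $\int_0^T\norm{\sgrad u(t)}{L^2(\Gamma(t))}^2\,\mathrm{d}t = \norm{\sgrad u}{L^2_{L^2(\Gamma)}}^2 \le \norm{u}{Q(\Gamma)}^2$, so the whole integral is bounded by $2(1+T)\norm{u}{Q(\Gamma)}^2$. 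Therefore $\norm{u}{L^{r_*}_{L^{q_*}(\Gamma)}}^{r_*}\le C'(1+T)\norm{u}{Q(\Gamma)}^{r_*}$, which gives $\norm{u}{L^{r_*}_{L^{q_*}(\Gamma)}}\le C_1(1+T)^{1/r_*}\norm{u}{Q(\Gamma)}$; since $(1+T)^{1/r_*}\le \sqrt{2}\,\sqrt{T}$ for $T>1$ and $(1+T)^{1/r_*}\le 2$ for $T\le 1$, this yields exactly the claimed dependence $C_1(T)=C_1\sqrt{T}$ for $T>1$ and $C_1$ independent of $T$ otherwise. The only mild obstacle is bookkeeping: verifying the exponent identity $\theta = 2/r_*$ from the stated formula for $q_*$ and checking that the dimensional constraint for the underlying Sobolev embedding ($d\le 2$, or $q_* \le 2d/(d-2)$ for $d>2$) is met by the range of $r_*$ used in the applications — but this is routine given \eqref{eq:sobolevInequality}.
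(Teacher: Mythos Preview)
Your proof is correct and follows essentially the same route as the paper: apply the pointwise Gagliardo--Nirenberg interpolation on $\Gamma(t)$ with $\theta = 2/r_*$, then integrate in time and control the $L^2$-in-time part of the $H^1$ norm by $\sqrt{T}$ times the sup norm. The paper integrates first and then applies Young's inequality with exponents $(r_*/2,(r_*/2)')$ to split the product, whereas you raise to the $r_*$th power, pull out $\norm{u}{Q(\Gamma)}^{r_*-2}$, and integrate the remaining square directly; this is a minor reorganisation and in fact yields the slightly sharper prefactor $(1+T)^{1/r_*}$, which you then relax to $\sqrt{T}$. Your explicit transfer of the Gagliardo--Nirenberg constant from $\Gamma_0$ to $\Gamma(t)$ via the uniform equivalence of norms under $\phi_{-t}$ is a useful clarification that the paper leaves implicit.
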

\begin{proof}
The Gagliardo--Nirenberg inequality (eg. \cite[\S 1.2]{Brouttelande2003}) states
\[\norm{u}{L^{q_*}(\Gamma)} \leq C\norm{u}{H^1(\Gamma)}^\theta \norm{u}{L^2(\Gamma)}^{1-\theta}\]
where ${1}\slash {q_*} = {\theta(d-2)}\slash {2d} + {(1-\theta)}\slash {2}$ for $\theta \in (0,1)$.
This implies that
\[\norm{u}{L^{r_*}_{L^{q_*}(\Gamma)}} = \left(\int_0^T \norm{u}{L^{q_*}(\Gamma(t))}^{r_*}\right)^{\frac{1}{r_*}} \leq C\left(\int_0^T \norm{u}{H^1(\Gamma(t))}^{r_*\theta}\right)^{\frac{1}{r_*}}\max_t \norm{u(t)}{L^2(\Gamma(t))}^{(1-\theta)}.\]
For $r_* > 2$, choosing $\theta = {2}\slash {r_*}$, this reads
\[\norm{u}{L^{r_*}_{L^{q_*}(\Gamma)}} \leq C\norm{u}{L^2_{H^1(\Gamma)}}^{\frac{2}{r_*}}\max_t \norm{u(t)}{L^2(\Gamma(t))}^{1 - \frac{2}{r_*}}.\]
An application of Young's inequality with exponents $({r_*}\slash{2}, ({r_*}\slash {2})')$ implies
\[\norm{u}{L^{r_*}_{L^{q_*}(\Gamma)}} \leq \frac{2C}{r_*}\norm{u}{L^2_{H^1(\Gamma)}} + C\left(1-\frac{2}{r_*}\right)\max \norm{u(t)}{L^2(\Gamma(t))},\]
and we conclude by using
\begin{align*}
\norm{u}{L^2_{H^1(\Gamma)}} &\leq \sqrt{T}\max_t\norm{u(t)}{L^2(\Gamma(t))}+ \norm{\sgrad u}{L^2_{L^2(\Gamma)}}.
\end{align*}
\end{proof}
The result of the next lemma is similar to the inequality in Lemma \ref{lem:gn} but it relates the left hand side to a norm on the bulk domain. For more details see \cite[(A.1)]{Nittka} and references therein.
\begin{lem}\label{lem:interpolatedSobolev}For $r_* \in [2,\infty]$ and $q_* \in [2, 2d\slash (d-1)]$ satisfying
\[\frac{1}{r_*} + \frac{d}{2q_*} = \frac{d+1}{4},\] we have
\[\norm{u}{L^{r_*}_{L^{q_*}(\Gamma)}} \leq \sqrt{C_I}\norm{u}{Q(\Omega)}.\]
\end{lem}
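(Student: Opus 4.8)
The plan is to combine a trace inequality, the Gagliardo--Nirenberg interpolation inequality on $\Omega$ (or equivalently a trace-Sobolev inequality), and a Young-type splitting in time, exactly as in the proof of Lemma \ref{lem:gn} but now working on the bulk domain and transferring information to the boundary. First I would recall the standard trace-interpolation estimate: for $u \in H^1(\Omega)$ and an exponent $q_* \in [2, 2d/(d-1)]$, one has $\norm{u}{L^{q_*}(\Gamma)} \leq C\norm{u}{H^1(\Omega)}^{\theta}\norm{u}{L^2(\Omega)}^{1-\theta}$ for the appropriate interpolation parameter $\theta \in [0,1]$ determined by $q_*$. This is the spatial half of the argument; the precise value of $\theta$ is dictated by the scaling identity $\tfrac{1}{r_*} + \tfrac{d}{2q_*} = \tfrac{d+1}{4}$ in the statement, which is designed so that taking $\theta = 2/r_*$ closes the bookkeeping.

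Next I would raise this to the power $r_*$, integrate in $t\in(0,T)$, pull the $L^2(\Omega(t))$ factor out as $\max_t\norm{u(t)}{L^2(\Omega(t))}^{1-\theta}$, and recognise the remaining time integral as $\norm{u}{L^2_{H^1(\Omega)}}$ raised to the power $r_*\theta = 2$ once we set $\theta = 2/r_*$. This gives
\[
\norm{u}{L^{r_*}_{L^{q_*}(\Gamma)}} \leq C\,\norm{u}{L^2_{H^1(\Omega)}}^{2/r_*}\,\Big(\max_t\norm{u(t)}{L^2(\Omega(t))}\Big)^{1-2/r_*}.
\]
Applying Young's inequality with conjugate exponents $(r_*/2, (r_*/2)')$ converts the product into a sum, and then bounding $\norm{u}{L^2_{H^1(\Omega)}} \leq \sqrt{T}\max_t\norm{u(t)}{L^2(\Omega(t))} + \norm{\grad u}{L^2_{L^2(\Omega)}}$ yields a bound by $C_I^{1/2}\norm{u}{Q(\Omega)}$, with the constant absorbing the $T$-dependence as in Lemma \ref{lem:gn}. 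The endpoint cases $r_* = \infty$ (then $q_* = 2d/(d-1)$, pure trace-Sobolev, $\theta$ at its maximal admissible value) and $r_* = 2$ (then $q_* = 2$, the plain trace theorem) should be checked separately but are immediate.

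The main obstacle, such as it is, is purely one of verifying that the admissible ranges $r_*\in[2,\infty]$, $q_*\in[2,2d/(d-1)]$ together with the scaling relation are exactly compatible with the constraint $\theta\in[0,1]$ needed for the Gagliardo--Nirenberg/trace interpolation to be valid, and that the constant can genuinely be taken uniform for $T$ in a bounded range (with the $\sqrt{T}$ growth otherwise). Since the statement already references \cite[(A.1)]{Nittka} for the stationary analogue, the substance of the proof is just checking that the time-dependence is handled correctly by the $L^\infty_t L^2_x$ norm appearing in $Q(\Omega)$, which is the same device used in Lemma \ref{lem:gn}; no new analytical difficulty arises.
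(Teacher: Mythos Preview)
Your approach is exactly what the paper intends: it does not give a proof of this lemma at all, but simply cites \cite[(A.1)]{Nittka} and remarks that the argument is analogous to Lemma~\ref{lem:gn}. Your proposal carries out precisely that analogue, replacing the Gagliardo--Nirenberg inequality on $\Gamma$ by the trace--interpolation inequality from $H^1(\Omega)$ to $L^{q_*}(\Gamma)$, and the scaling check $\theta = 2/r_*$ is correct.

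One small slip: you have the endpoint cases reversed. From $\tfrac{1}{r_*} + \tfrac{d}{2q_*} = \tfrac{d+1}{4}$ one finds that $r_* = 2$ corresponds to $q_* = 2d/(d-1)$ (the full trace--Sobolev exponent, $\theta = 1$), while $r_* \to \infty$ forces $q_* = 2d/(d+1) < 2$ (and $\theta \to 0$). In particular the stated range $q_* \in [2, 2d/(d-1)]$ actually corresponds to $r_* \in [2,4]$, not all of $[2,\infty]$; for $r_* > 4$ one needs $q_* < 2$, which on a bounded surface follows trivially by H\"older once the $q_*=2$ case is in hand. This does not affect the validity of your argument, only the description of which endpoint is which.
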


\subsection{Aubin-—Lions compactness} 
Suppose $\Omega(t)$ is a moving domain with boundary $\Gamma(t)$ endowed with the velocity fields and properties and regularity stated in \S \ref{sec:weakFormulation}. We use the following compactness result which follows from the standard Aubin-—Lions compactness theory on Bochner spaces (see \cite[Theorem II.5.16]{Boyer}) and the fact that the standard Bochner spaces are isomorphic to our time-evolving Bochner spaces with an equivalence of norms (see \cite[\S 4 and \S 5]{AESApplications}).
\begin{lem}\label{Lem:AubinLions}
The following holds:
\begin{enumerate}
\item The space $H^1_{H^1(\Gamma)^*}\cap L^2_{H^1(\Gamma)}$ is compactly embedded in $L^2_{L^2 (\Gamma)}.$
\item The space $H^1_{L^2(\Gamma)}\cap L^2_{H^1(\Gamma)}$ is compactly embedded in $L^2_{H^{1\slash 2}(\Gamma)}.$
\item The space $H^1_{H^1(\Omega)^*}\cap L^2_{H^1(\Omega)}$ is compactly embedded in $L^2_{L^2(\Omega)}$.
\end{enumerate}
\end{lem}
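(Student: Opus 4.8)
The plan is to deduce all three compact embeddings from the classical Aubin--Lions--Simon compactness theorem for fixed (time-independent) Bochner spaces by transporting everything back to the reference configuration via the flow map $\Phi$. The key structural input, recorded in \S\ref{sec:weakFormulation} and established in \cite{AESApplications, AEStefan}, is that the pullback operator $\phi_{-(\cdot)}$ furnishes an isomorphism between each evolving Sobolev--Bochner space and the corresponding standard Bochner space over the fixed reference domain, with an equivalence of norms that is uniform in time (this uses the stated $C^2$/$C^3$-regularity of $\Phi$ so that $\phi_{-t}$ is a uniform homeomorphism on $L^2$, $H^1$, $H^{1/2}$ and, crucially, that it intertwines the weak material derivative $\dot u$ with the ordinary time derivative of the pulled-back function, up to a bounded zeroth-order term involving $\grad\cdot\mathbf V_p$ — see the definition of $\dot u$ and \cite[\S 4--5]{AESApplications}).

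So first I would fix the reference spaces. For part (i), set $X_0 = H^1(\Gamma_0)$, $Y_0 = L^2(\Gamma_0)$, $Z_0 = H^1(\Gamma_0)^*$; we have the chain $X_0 \cts\cts Y_0 \cts Z_0$ with the first embedding compact (Rellich--Kondrachov on the compact surface $\Gamma_0$) and the second continuous and dense. The classical theorem (e.g. \cite[Theorem II.5.16]{Boyer}) then gives that $\{u \in L^2(0,T;X_0) : u' \in L^2(0,T;Z_0)\}$ embeds compactly into $L^2(0,T;Y_0)$. Transporting back through the isomorphism $\phi_{-(\cdot)}$ — noting that a bounded set in $H^1_{H^1(\Gamma)^*}\cap L^2_{H^1(\Gamma)}$ pulls back to a bounded set in $\{u \in L^2(0,T;X_0): u'\in L^2(0,T;Z_0)\}$ because the extra zeroth-order term in the relation between $\dot u$ and $\partial_t(\phi_{-t}u)$ is controlled by $\norm{\grad\cdot\mathbf V_p}{\infty}\norm{u}{L^2_{L^2(\Gamma)}}$ — yields that this bounded set is precompact in $L^2(0,T;Y_0)$, hence (pushing forward again, using norm equivalence) precompact in $L^2_{L^2(\Gamma)}$. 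Part (iii) is identical with $\Gamma_0$ replaced by $\Omega_0$: $H^1(\Omega_0)\cts\cts L^2(\Omega_0)\cts H^1(\Omega_0)^*$. For part (ii), I would instead take the Gelfand-type triple $H^1(\Gamma_0) \cts\cts H^{1/2}(\Gamma_0) \cts L^2(\Gamma_0)$, where the first embedding is compact by the standard fractional Rellich theorem on the compact manifold $\Gamma_0$ (interpolation inequality $\norm{v}{H^{1/2}}\le C\norm{v}{H^1}^{1/2}\norm{v}{L^2}^{1/2}$ plus the compactness of $H^1\cts\cts L^2$), and apply Aubin--Lions with $Y_0 = L^2(\Gamma_0)$ and time derivative only in $L^2(0,T;L^2(\Gamma_0))$; the conclusion $H^1_{L^2(\Gamma)}\cap L^2_{H^1(\Gamma)} \cts\cts L^2_{H^{1/2}(\Gamma)}$ follows after transporting back, using the additional regularity hypotheses on $\Phi$ ($C^3$-diffeomorphism, $\Phi_{(\cdot)}\in C^3$) that guarantee $\phi_{-t}$ is a uniform homeomorphism on $H^{1/2}(\Gamma(t))$, as recorded in \cite[\S 5.4.1]{AESApplications}.

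The only genuinely delicate point — and the one I would be most careful about — is the verification that the pullback isomorphism behaves well with respect to the time derivative, i.e. that boundedness of $\dot u$ in the evolving dual space $L^2_{H^1(\Gamma)^*}$ (resp.\ in $L^2_{L^2(\Gamma)}$) translates into boundedness of $\partial_t(\phi_{-t}u)$ in $L^2(0,T;H^1(\Gamma_0)^*)$ (resp.\ $L^2(0,T;L^2(\Gamma_0))$), which is precisely the hypothesis the fixed-space Aubin--Lions theorem needs. This is exactly the content of the norm-equivalence results of \cite[\S 4--5]{AESApplications} (and \cite{AESAbstract, AEStefan}), so the argument is a matter of invoking those results with the correct triples rather than proving anything new; everything else is the textbook compactness theorem. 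I would therefore state the proof concisely as: choose the three Gelfand triples above, apply \cite[Theorem II.5.16]{Boyer} in each case, and conclude by the isomorphism and norm equivalence between the evolving and reference Bochner spaces from \cite[\S 4--5]{AESApplications}.
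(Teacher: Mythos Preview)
Your proposal is correct and follows essentially the same approach as the paper: pull back to the reference domain via $\phi_{-(\cdot)}$, invoke the norm-equivalence results of \cite[\S 4--5]{AESApplications} (and \cite[\S 5.4.1]{AESApplications} for the $H^{1/2}$ case) to reduce to standard Bochner spaces, apply the classical Aubin--Lions theorem \cite[Theorem II.5.16]{Boyer} with exactly the three Gelfand triples you list, and push forward. The only minor remark is that the relationship between $\dot u$ and $\partial_t(\phi_{-(\cdot)}u)$ is slightly more involved than a single zeroth-order correction (the Jacobian $J_t$ enters as well), but you correctly defer this to the cited references, as does the paper.
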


\begin{proof}
Let us begin by addressing the first case. Let $u_n$ be a bounded sequence in $H^1_{H^{1}(\Gamma)^*}\cap L^2_{H^1(\Gamma)} =: W_\Gamma$; we need to show that there is a subsequence that converges in $L^2_{L^2(\Gamma)}$. 

For readability, define $W_0 := H^1(0,T;H^1(\Gamma_0)^*) \cap L^2(0,T;H^1(\Gamma_0))$. The mapping $\phi_{-t} w = w \circ \Phi_t$ defined in section \ref{sec:weakFormulation} is such that 
\begin{equation}\label{eq:appendixIFF}
w \in W_\Gamma \quad\text{if and only if} \quad \phi_{-(\cdot)}w(\cdot) \in W_0
\end{equation}
 and the equivalence of norms
\begin{equation}\label{eq:appendixEON}
C_1\norm{\phi_{-(\cdot)}w(\cdot)}{W_0} \leq \norm{w}{W_\Gamma} \leq C_2\norm{\phi_{-(\cdot)}w(\cdot)}{W_0}\quad \forall w \in W_\Gamma
\end{equation}
holds. These two properties are indeed true due to the smoothness assumed on the velocity fields in the definition of $\Phi_t$ which then implies a certain smoothness on the Jacobian term $J_t$ that arises when transforming integrals on $\Gamma(t)$ onto the initial surface $\Gamma_0$ via the map $\Phi_t$ and/or its inverse (refer to section \ref{sec:strongSolutionsForu} where we defined these objects); thus it is not difficult to see that $w \in L^2_{H^1(\Gamma)}$ if and only if $\phi_{-(\cdot)}w(\cdot) \in L^2(0,T;H^1(\Gamma_0))$. The proof is then in two steps to deal with the time and material derivatives: one first proves that $\hat w$ belongs to $W_0$ if and only if $J_{(\cdot)}\hat w$ belongs to $W_0$, then one proves $\hat w \in W_0$ has a weak time derivative if and only if its pushforward $\phi_{(\cdot)}\hat w(\cdot)$ (here $\phi_t := (\phi_{-t})^{-1}$) has a weak material derivative which can be shown by some manipulations involving the formula defining the time and material derivatives (see the proof of Theorem 2.33 in \cite{AESAbstract} for the details of this in an abstract setting). The equivalence of norms \eqref{eq:appendixEON} is again a consequence of the smoothness of $J_{(\cdot)}$. Full details can be found in the aforementioned citation and also in \cite[\S 4.1]{AESApplications} for this particular case.


Due to \eqref{eq:appendixEON}, the sequence $\hat u_n := \phi_{-(\cdot)}u_n$ is bounded in $W_0$, and thanks to the standard Aubin--Lions compactness result \cite[Theorem II.5.16]{Boyer} on Bochner spaces, this gives rise to the existence of a subsequence $\hat u_{n_j}$ such that
\[\hat u_{n_j} \to \hat\eta \quad \text{in $L^2(0,T;L^2(\Gamma_0))$}\] for some $\hat\eta \in L^2(0,T;L^2(\Gamma_0))$. Because of our smoothness assumptions on $\Phi_t$, the map $\phi_{t}\colon L^2(\Gamma_0) \to L^2(\Gamma(t))$ is bounded \emph{uniformly} in $t$ (this is easy to see by transforming integrals using the Jacobian term $J_{(\cdot)}$, see also \cite[Lemma 3.2]{Vierling}) and this along with the measurability of $t \mapsto \norm{\phi_t \hat w_0}{L^2(\Gamma(t))}$ (for fixed $\hat w_0 \in L^2(\Gamma_0)$) ensures that $\phi_{(\cdot)}$ also carries $L^2(0,T;L^2(\Gamma_0))$ into $L^2_{L^2(\Gamma)}$ with
\begin{equation}\label{eq:appendixEONnonDiff}
C_1\norm{\hat w}{L^2(0,T;L^2(\Gamma_0))} \leq \norm{\phi_{(\cdot)}\hat w(\cdot)}{L^2_{L^2(\Gamma)}} \leq C_2\norm{\hat w}{L^2(0,T;L^2(\Gamma_0))}\quad \forall \hat w \in L^2(0,T;L^2(\Gamma_0))
\end{equation}
(see also the paragraph after Definition 3.3 in \cite{AESApplications}).  Using this inequality with $\hat w=\hat u_{n_j}-\hat\eta$ and recalling that $\phi_{(\cdot)}$ is linear, we see that the pushforward $\phi_{(\cdot)}\hat u_{n_j}(\cdot)=\phi_{(\cdot)}\phi_{-(\cdot)}u_{n_j}(\cdot)=u_{n_j}$ of this subsequence converges to $\phi_{(\cdot)}\hat\eta(\cdot)$ in $L^2_{L^2(\Gamma)}$, which proves the result.

This structure of the above proof is unchanged for the remaining two cases; we just need to verify \eqref{eq:appendixIFF}, \eqref{eq:appendixEON} and \eqref{eq:appendixEONnonDiff} with the right modifications for the particular case in consideration. The properties \eqref{eq:appendixIFF} and \eqref{eq:appendixEON} for the second case in the lemma can be justified with the simple technical adjustments needed for the better regularity present in the time/material derivative. The corresponding inequality \eqref{eq:appendixEONnonDiff} here is
\[C_1\norm{\hat w}{L^2(0,T;H^{1\slash 2}(\Gamma_0))} \leq \norm{\phi_{(\cdot)}\hat w(\cdot)}{L^2_{H^{1\slash 2}(\Gamma)}} \leq C_2\norm{\hat w}{L^2(0,T;H^{1\slash 2}(\Gamma_0))}\quad\forall \hat w \in L^2(0,T;H^{1\slash 2}(\Gamma_0))
\]
and the justification of this and related technical matters were studied in detail in \S 5.4.1 of \cite{AESApplications}. The third case is also very similar to the proof presented above; the only difference is that the spaces are on the domain $\Omega(t)$ instead of the boundary and indeed this has been dealt with in \cite[\S 4.2]{AESApplications}.
\end{proof}

\section{Non-dimensionalisation}\label{sec:nondim}
In order to non-dimensionalise the system \eqref{eq:bioModel}, we use the new variables
\[\bar x = x\slash L
\qquad \bar t = t\slash S
\qquad \bar u = u\slash U
\qquad \bar w = w\slash W
\qquad \bar z = z\slash Z
\qquad \bar{\mathbf V}_\Omega = S \mathbf{V_\Omega}\slash L
\qquad \bar{\mathbf V}_\Gamma = S\mathbf {V_\Gamma} \slash L\]
under the notation $\bar f(\bar t, \bar x)=f(t,x)\slash F$. Here $L$ is a length scale, $S$ is a time scale and  $U, W, Z$ are typical concentration values for $u,w,z$ respectively. Observe by the chain rule that
\[u_t = \frac US\bar u'
\qquad \grad u = \frac UL\grad \bar u
\qquad \grad \cdot \mathbf{V}_\Omega = \frac 1S \grad \cdot \bar{\mathbf{V}_\Omega}
\qquad (\mathbf{V}_\Gamma - \mathbf{V}_\Omega)\cdot \nu = \frac LS (\bar{\mathbf{V}_\Gamma}-\bar{\mathbf{V}_\Omega})\cdot \nu =: \frac LS\bar j
\]
and hence $\dot u = \frac US \md \bar u.$ The $u$ equation then reads
\[\frac US \md \bar u + \frac US \bar u \grad \cdot \bar{\mathbf{V}_\Omega} - \frac{D_LU}{L^2}\lap \bar u = 0,\]
which we can multiply through by $\frac SU$ to obtain
\[\md \bar u + \bar u \grad \cdot \bar{\mathbf{V}_\Omega} - \delta_\Omega \lap \bar u = 0\]
where we have set $\delta_\Omega := {SD_L}\slash {L^2}$. The boundary condition becomes
\[\frac{D_L U}{L}\grad \bar u \cdot \nu + \frac{UL}{S}\bar j\bar u  = Zk_{off}\bar z - UWk_{on}\bar u \bar w\]
which we can write as 
(multiplying by ${S}\slash {LU}$)
\[\delta_\Omega\grad \bar u \cdot \nu + \bar u \bar j = \frac{SZ}{LU}k_{off}\bar z - \frac{SW}{L}k_{on}\bar u \bar w.\]
In a similar fashion, we derive the equations for $w$ and $z$: 
\begin{align*}
\md \bar w + \bar w\sgrad \cdot \bar{\mathbf{V}_\Gamma} - \frac{SD_\Gamma}{L^2}\slap \bar w &= \frac{SZ}{W}k_{off}\bar z - USk_{on}\bar u \bar w\\
\md \bar z + \bar z\sgrad \cdot \bar{\mathbf{V}_\Gamma} - \frac{D_{\Gamma'} S}{L^2}\slap \bar z &= -\left(Sk_{off}\bar z - \frac{SUW}{Z}k_{on}\bar u \bar w\right)
\end{align*}
Defining
\[
\delta_\Gamma := \frac{D_\Gamma S}{L^2}
\qquad \delta_{\Gamma'} := \frac{SD_{\Gamma'}}{L^2}
\qquad \gamma := \frac{LU}{W}
\qquad d_{k} := \frac{L}{WSk_{on}}
\qquad \gamma' := \frac {LU}{Z}
\qquad d_{k'} := \frac{UL}{SZk_{off}},\] so that
\[\frac{\gamma}{d_k} = USk_{on}\qquad \frac{\gamma'}{d_k} = \frac{WUS}{Z}k_{on}
\qquad \frac{\gamma}{d_{k'}} = \frac{ZSk_{off}}{W}\qquad\text{and}\qquad \frac{\gamma'}{d_{k'}} = Sk_{off},\]
we can write
\begin{align*}
\delta_\Omega\grad \bar u \cdot \nu + \bar u \bar j &= \frac{1}{d_{k'}}\bar z - \frac{1}{d_k}\bar u \bar w\\
    \md \bar w + \bar w\sgrad \cdot \bar{\mathbf{V}_\Gamma} - d_{\Gamma}\slap \bar w &= \gamma\left(\frac{1}{d_{k'}}\bar z - \frac{1}{d_k}\bar u \bar w\right)\\
    \md \bar z + \bar z\sgrad \cdot \bar{\mathbf{V}_\Gamma} - d_{\Gamma'}\slap \bar z &= -\gamma'\left(\frac{1}{d_{k'}}\bar z - \frac{1}{d_k}\bar u \bar w\right).
\end{align*}
Now set $w = {\bar w}\slash {\gamma}$ and $z={\bar z}\slash {\gamma'}$ and the above then becomes
\begin{align*}
\delta_\Omega\grad \bar u \cdot \nu + \bar u \bar j &= \frac{\gamma'}{d_{k'}} z - \frac{\gamma}{d_k}\bar u w\\
    \md  w +  w\sgrad \cdot \bar{\mathbf{V}_\Gamma} - d_{\Gamma}\slap w &= \frac{\gamma'}{d_{k'}} z - \frac{\gamma}{d_k}\bar u w\\
    \md z +  z\sgrad \cdot \bar{\mathbf{V}_\Gamma} - d_{\Gamma'}\slap z &= -\left(\frac{\gamma'}{d_{k'}} z - \frac{\gamma}{d_k}\bar u w\right).
\end{align*}
Finally, set 
\[\frac{1}{\delta_k} := \frac{\gamma}{d_k}\qquad \frac{1}{\delta_{k'}} := \frac{\gamma'}{d_{k'}}\] and relabel all variables (and write $j:=-\bar j$) to obtain (recalling the equation for $\bar u$)
\begin{align*}
\md u +  u \grad \cdot {\mathbf{V}_\Omega} - \delta_\Omega \lap  u &= 0\\
\delta_\Omega\grad  u \cdot \nu - u j &= \frac{1}{\delta_{k'}} z - \frac{1}{\delta_k} u w\\
    \md  w +  w\sgrad \cdot {\mathbf{V}_\Gamma} - d_{\Gamma}\slap w &= \frac{1}{\delta_{k'}} z - \frac{1}{\delta_k} u w\\
    \md z +  z\sgrad \cdot {\mathbf{V}_\Gamma} - d_{\Gamma'}\slap z &= \frac{1}{\delta_k} u w -\frac{1}{\delta_{k'}} z .
\end{align*}
 This is exactly the model \eqref{eq:modelALE} with the parametrisation velocity $\mathbf V_p$ chosen to be the corresponding material velocities. 
\bibliographystyle{abbrv}
\bibliography{CBBSBib}

\def\cprime{$'$}
\begin{thebibliography}{10}

\bibitem{AEStefan}
A.~Alphonse and C.~M. Elliott.
\newblock A {S}tefan problem on an evolving surface.
\newblock {\em Philos. Trans. A}, 373(2050):20140279, 16, 2015.

\bibitem{AESAbstract}
A.~Alphonse, C.~M. Elliott, and B.~Stinner.
\newblock An abstract framework for parabolic {PDE}s on evolving spaces.
\newblock {\em Port. Math.}, 72(1):1--46, 2015.

\bibitem{AESApplications}
A.~Alphonse, C.~M. Elliott, and B.~Stinner.
\newblock On some linear parabolic {PDE}s on moving hypersurfaces.
\newblock {\em Interfaces Free Bound.}, 17(2):157--187, 2015.

\bibitem{Arendt}
W.~Arendt, C.~J.~K. Batty, M.~Hieber, and F.~Neubrander.
\newblock {\em Vector-valued {L}aplace transforms and {C}auchy problems},
  volume~96 of {\em Monographs in Mathematics}.
\newblock Birkh\"auser/Springer Basel AG, Basel, second edition, 2011.

\bibitem{aronson1963}
D.~G. Aronson.
\newblock {On the Green's function for second order parabolic differential
  equations with discontinuous coefficients}.
\newblock {\em Bull. Amer. Math. Soc.}, 69(6):841--847, 11 1963.

\bibitem{Aronson1967}
D.~G. Aronson and J.~Serrin.
\newblock Local behavior of solutions of quasilinear parabolic equations.
\newblock {\em Archive for Rational Mechanics and Analysis}, 25(2):81--122,
  1967.

\bibitem{BaoFellnerLatos}
T.~Q. {Bao}, K.~{Fellner}, and E.~{Latos}.
\newblock {Well-posedness and exponential equilibration of a volume-surface
  reaction-diffusion system with nonlinear boundary coupling}.
\newblock {\em ArXiv e-prints}, Apr. 2014.

\bibitem{Bothe2012}
D.~Bothe, M.~Pierre, and G.~Rolland.
\newblock {Cross-Diffusion Limit for a Reaction-Diffusion System with Fast
  Reversible Reaction}.
\newblock {\em Communications in Partial Differential Equations},
  37(11):1940--1966, 2012.

\bibitem{Boyer}
F.~Boyer and P.~Fabrie.
\newblock {\em Mathematical Tools for the Study of the Incompressible
  Navier-Stokes Equations and Related Models}.
\newblock Applied Mathematical Sciences. Springer, 2012.

\bibitem{Brouttelande2003}
C.~Brouttelande.
\newblock {The Best-Constant Problem for a Family of Gagliardo--Nirenberg
  Inequalities on a Compact Riemannian Manifold}.
\newblock {\em Proceedings of the Edinburgh Mathematical Society},
  46(01):117--146, 2003.

\bibitem{CFG}
P.~Cermelli, E.~Fried, and M.~E. Gurtin.
\newblock Transport relations for surface integrals arising in the formulation
  of balance laws for evolving fluid interfaces.
\newblock {\em J. Fluid Mech.}, 544:339--351, 2005.

\bibitem{Cs}
I.~Csisz\'{a}r.
\newblock {Eine informationstheoretische Ungleichung und ihre Anwendung auf den
  Beweis der Ergodizitat von Markoffschen Ketten}.
\newblock {\em Magyar. Tud. Akad. Mat. Kutato Int. Kozl.}, 8:85--108, 1963.

\bibitem{DeGiorgiEnnio}
E.~De~Giorgi.
\newblock Sulla differenziabilit\`a e l'analiticit\`a delle estremali degli
  integrali multipli regolari.
\newblock {\em Mem. Accad. Sci. Torino. Cl. Sci. Fis. Mat. Nat. (3)}, 3:25--43,
  1957.

\bibitem{Delfour}
M.~C. Delfour and J.-P. Zol{\'e}sio.
\newblock {\em Shapes and geometries}, volume~22 of {\em Advances in Design and
  Control}.
\newblock Society for Industrial and Applied Mathematics (SIAM), Philadelphia,
  PA, second edition, 2011.
\newblock Metrics, analysis, differential calculus, and optimization.

\bibitem{Demengel}
F.~Demengel, G.~Demengel, and R.~Ern{\'e}.
\newblock {\em Functional Spaces for the Theory of Elliptic Partial
  Differential Equations}.
\newblock Universitext. Springer London, 2012.

\bibitem{Pruss}
R.~Denk, M.~Hieber, and J.~Pr{\"u}ss.
\newblock Optimal {$L^p$}-{$L^q$}-estimates for parabolic boundary value
  problems with inhomogeneous data.
\newblock {\em Math. Z.}, 257(1):193--224, 2007.

\bibitem{DesFellner}
L.~Desvillettes and K.~Fellner.
\newblock Exponential decay toward equilibrium via entropy methods for
  reaction-diffusion equations.
\newblock {\em J. Math. Anal. Appl.}, 319(1):157--176, 2006.

\bibitem{Desvillettes2014}
L.~Desvillettes and K.~Fellner.
\newblock Exponential convergence to equilibrium for nonlinear
  reaction-diffusion systems arising in reversible chemistry.
\newblock In C.~P{\"o}tzsche, C.~Heuberger, B.~Kaltenbacher, and F.~Rendl,
  editors, {\em System Modeling and Optimization: 26th IFIP TC 7 Conference,
  CSMO 2013, Klagenfurt, Austria, September 9-13, 2013, Revised Selected
  Papers}, pages 96--104. Springer Berlin Heidelberg, Berlin, Heidelberg, 2014.

\bibitem{DiBenedetto}
E.~DiBenedetto.
\newblock {\em Degenerate Parabolic Equations}.
\newblock Universitext (Berlin. Print). Springer New York, 1993.

\bibitem{DGV}
E.~DiBenedetto, U.~Gianazza, and V.~Vespri.
\newblock {\em Harnack's Inequality for Degenerate and Singular Parabolic
  Equations}.
\newblock Springer Monographs in Mathematics. Springer New York, 2011.

\bibitem{Dziuk2006}
G.~Dziuk and C.~M. Elliott.
\newblock {Finite elements on evolving surfaces}.
\newblock {\em IMA Journal of Numerical Analysis}, 27(2):262--292, 2006.

\bibitem{Dziuk2013}
G.~Dziuk and C.~M. Elliott.
\newblock {Finite element methods for surface PDEs}.
\newblock {\em Acta Numerica}, 22:289--396, 2013.

\bibitem{ERV}
C.~M. {Elliott}, T.~{Ranner}, and C.~{Venkataraman}.
\newblock {Coupled bulk-surface free boundary problems arising from a
  mathematical model of receptor-ligand dynamics}.
\newblock {\em SIAM J. Math. Anal. (to appear)}, 2017.

\bibitem{Elliott2012}
C.~M. Elliott and V.~Styles.
\newblock An {ALE} {ESFEM} for solving {PDE}s on evolving surfaces.
\newblock {\em Milan J. Math.}, 80(2):469--501, 2012.

\bibitem{FellnerLaamri}
K.~Fellner and E.-H. Laamri.
\newblock Exponential decay towards equilibrium and global classical solutions
  for nonlinear reaction-diffusion systems.
\newblock {\em J. Evol. Equ.}, 16(3):681--704, 2016.

\bibitem{Fellner2015}
K.~Fellner and B.~A. O.~Q. Tang.
\newblock {Explicit exponential convergence to equilibrium for mass action
  reaction-diffusion systems}.
\newblock pages 1--29, 2015.

\bibitem{Feng}
W.~Feng.
\newblock Coupled system of reaction-diffusion equations and applications in
  carrier facilitated diffusion.
\newblock {\em Nonlinear Anal.}, 17(3):285--311, 1991.

\bibitem{BioPaper}
P.~Garc{\'{\i}}a-Pe{\~n}arrubia, J.~J. G{\'a}lvez, and J.~G{\'a}lvez.
\newblock Mathematical modelling and computational study of two-dimensional and
  three-dimensional dynamics of receptor-ligand interactions in signalling
  response mechanisms.
\newblock {\em J. Math. Biol.}, 69(3):553--582, 2014.

\bibitem{Grisvard}
P.~Grisvard.
\newblock {\em Elliptic Problems in Nonsmooth Domains}.
\newblock Classics in Applied Mathematics. Society for Industrial and Applied
  Mathematics, 2011.

\bibitem{MR3423226}
S.~Gross, M.~A. Olshanskii, and A.~Reusken.
\newblock A trace finite element method for a class of coupled bulk-interface
  transport problems.
\newblock {\em ESAIM Math. Model. Numer. Anal.}, 49(5):1303--1330, 2015.

\bibitem{jungel}
A.~J{\"u}ngel.
\newblock {\em Entropy Methods for Diffusive Partial Differential Equations}.
\newblock SpringerBriefs in Mathematics. Springer International Publishing,
  2016.

\bibitem{Lady}
O.~A. Lady{\v{z}}enskaja, V.~A. Solonnikov, and N.~N. Ural{\cprime}ceva.
\newblock {\em Linear and quasilinear equations of parabolic type}.
\newblock Translated from the Russian by S. Smith. Translations of Mathematical
  Monographs, Vol. 23. American Mathematical Society, Providence, R.I., 1968.

\bibitem{LU}
O.~A. Lady{\v{z}}enskaja and N.~N. Ural{\cprime}ceva.
\newblock A boundary-value problem for linear and quasi-linear parabolic
  equations. {I}, {II}, {III}.
\newblock {\em Iaz. Akad. Nauk SSSR Ser. Mat. 26 (1962), 5-52; ibid. 26 (1962),
  753- 780; ibid.}, 27:161--240, 1962.

\bibitem{MacKenzie}
G.~MacDonald, J.~Mackenzie, M.~Nolan, and R.~Insall.
\newblock A computational method for the coupled solution of reaction-diffusion
  equations on evolving domains and manifolds: application to a model of cell
  migration and chemotaxis.
\newblock {\em Journal of Computational Physics}, 309:207--226, March 2016.

\bibitem{Madzvamuse2014}
A.~Madzvamuse, H.~S. Ndakwo, and R.~Barreira.
\newblock {Cross-diffusion-driven instability for reaction-diffusion systems:
  analysis and simulations}.
\newblock {\em Journal of Mathematical Biology}, 70(4):709--743, 2014.

\bibitem{Madzvamuse2016}
A.~Madzvamuse, H.~S. Ndakwo, and R.~Barreira.
\newblock {Stability analysis of reaction-diffusion models on evolving domains
  the effects of cross-diffusion}.
\newblock {\em Discrete and Continuous Dynamical Systems- Series A},
  36(4):2133--2170, 2016.

\bibitem{Marth2014}
W.~Marth and A.~Voigt.
\newblock {Signaling networks and cell motility: A computational approach using
  a phase field description}.
\newblock {\em Journal of Mathematical Biology}, 69(1):91--112, 2014.

\bibitem{Morgan}
J.~Morgan.
\newblock Global existence for semilinear parabolic systems.
\newblock {\em SIAM Journal on Mathematical Analysis}, 20(5):1128--1144, 1989.

\bibitem{Ni}
W.-M. Ni.
\newblock Diffusion, cross-diffusion, and their spike-layer steady states.
\newblock {\em Notices Amer. Math. Soc.}, 45(1):9--18, 1998.

\bibitem{Nittka}
R.~Nittka.
\newblock Inhomogeneous parabolic {N}eumann problems.
\newblock {\em Czechoslovak Math. J.}, 64(139)(3):703--742, 2014.

\bibitem{Ratz2012}
A.~R{\"{a}}tz and M.~R{\"{o}}ger.
\newblock {Turing instabilities in a mathematical model for signaling
  networks}.
\newblock {\em Journal of Mathematical Biology}, 65(6-7):1215--1244, 2012.

\bibitem{RatzRogerSymmetry}
A.~R\"atz and M.~R\"oger.
\newblock {Symmetry breaking in a bulk-surface reaction-diffusion model for
  signalling networks}.
\newblock {\em Nonlinearity}, 27(8):1805, 2014.

\bibitem{Rothe}
F.~Rothe.
\newblock {\em Global solutions of reaction-diffusion systems}, volume 1072 of
  {\em Lecture Notes in Mathematics}.
\newblock Springer-Verlag, Berlin, 1984.

\bibitem{SHIGESADA197983}
N.~Shigesada, K.~Kawasaki, and E.~Teramoto.
\newblock Spatial segregation of interacting species.
\newblock {\em Journal of Theoretical Biology}, 79(1):83 -- 99, 1979.

\bibitem{Stampacchia1965}
G.~Stampacchia.
\newblock {Le probl\`{e}me de Dirichlet pour les \'{e}quations elliptiques du
  second ordre \`{a} coefficients discontinus}.
\newblock {\em Annales de l'institut Fourier}, 15(1):189--257, 1965.

\bibitem{Stroock1993}
D.~W. Stroock.
\newblock {Logarithmic Sobolev inequalities for Gibbs states}.
\newblock In G.~Dell'Antonio and U.~Mosco, editors, {\em Dirichlet Forms:
  Lectures given at the 1st Session of the Centro Internazionale Matematico
  Estivo (C.I.M.E.) held in Varenna, Italy, June 8--19, 1992}, pages 194--228.
  Springer Berlin Heidelberg, Berlin, Heidelberg, 1993.

\bibitem{B813825G}
V.~K. Vanag and I.~R. Epstein.
\newblock Cross-diffusion and pattern formation in reaction-diffusion systems.
\newblock {\em Phys. Chem. Chem. Phys.}, 11:897--912, 2009.

\bibitem{Vierling}
M.~Vierling.
\newblock Parabolic optimal control problems on evolving surfaces subject to
  point-wise box constraints on the control---theory and numerical realization.
\newblock {\em Interfaces Free Bound.}, 16(2):137--173, 2014.

\bibitem{Weidemaier2002}
P.~Weidemaier.
\newblock {Maximal regularity for parabolic equations with inhomogeneous
  boundary conditions in Sobolev spaces with mixed $L_p$-norm}.
\newblock {\em Electronic Research Announcements of the American Mathematical
  Society}, 8(02):47----51, 2002.

\end{thebibliography}
\end{document}